\newcommand{\mylabel}[2]{(#2)\def\@currentlabel{#2}\label{#1}}
\theoremstyle{plain}
\newtheorem{theorem}{Theorem}[section]
\newtheorem{lemma}[theorem]{Lemma}
\newtheorem{corollary}[theorem]{Corollary}
\newtheorem{proposition}[theorem]{Proposition}
\theoremstyle{definition}
\newtheorem{definition}[theorem]{Definition}
\newtheorem{remark}[theorem]{Remark}
\newtheorem*{remark*}{Remark}
\newtheorem{example}[theorem]{Example}
\numberwithin{equation}{section}
\newcommand{\R}{\mathbb{R}}
\newcommand{\Rd}{\mathbb{R}^d}
\renewcommand{\d}{\mathrm{d}} 
\newcommand{\dx} {\, \mathrm{d}x}
\newcommand{\dy} {\, \mathrm{d}y}
\newcommand{\dt} {\, \mathrm{d}t}
\newcommand{\cl} {c_{\ast}}
\newcommand{\cU} {c_U}
\newcommand{\cK} {c_K}
\newcommand{\cH} {c_H}
\newcommand{\cJ} {c_J}
\newcommand{\setword}[2]{%
  \phantomsection
  #1\def\@currentlabel{\unexpanded{#1}}\label{#2}%
}
\title[]{Robust near-diagonal Green function estimates}
\author{Moritz Kassmann}
\address{Fakult\"at f\"ur Mathematik, Universit\"at Bielefeld, 33615 Bielefeld, Germany}
\email{moritz.kassmann@uni-bielefeld.de}
\author{Minhyun Kim}
\address{Department of Mathematics, Hanyang University, 04763 Seoul, Republic of Korea}
\email{minhyun@hanyang.ac.kr}
\author{Ki-Ahm Lee}
\address{Department of Mathematical Sciences \& Research Institute of Mathematics, Seoul National University, 08826 Seoul, Republic of Korea}
\email{kiahm@snu.ac.kr}
\subjclass[2020]{35J08, 47G20, 60G52, 31C25}
\keywords{Green function, nonlocal operator}
\thanks{Moritz Kassmann and Minhyun Kim gratefully acknowledge financial support by the German Research Foundation (SFB 1283 - 317210226 resp. GRK 2235 - 282638148). The research of Minhyun Kim is supported by the research fund of Hanyang University (HY-202300000001143). The research of Ki-Ahm Lee is supported by the National Research Foundation of Korea (NRF) grant funded by the Korea government (MSIP): NRF-2021R1A4A1027378.}
\begin{document}

\begin{abstract}
We prove sharp near-diagonal pointwise bounds for the Green function $G_\Omega(x,y)$ for nonlocal operators of fractional order $\alpha \in (0,2)$. The novelty of our results is two-fold: the estimates are robust as $\alpha \to 2-$ and we prove the bounds without making use of the Dirichlet heat kernel $p_\Omega(t;x,y)$. In this way we can cover cases, in which the Green function satisfies isotropic bounds but the heat kernel does not. 
\end{abstract}

\maketitle

\section{Introduction} \label{sec:introduction}

An important object in Potential Analysis is the Green function. Explicit formulas for the Green function of the fractional Laplace operator for $0 < \alpha < 2$ are complicated, even for domains like a ball in the Euclidean space, see \cite{Rie38, BGR61} or more recently \cite{Buc16}. Let $G^{(\alpha)}$ be the Green function for the unit ball in $\R^d$ and assume $d \geq 3$. A careful study of the constants shows for $x_0 \in B$
\begin{align*}
\lim\limits_{\alpha \to 2} \, \lim\limits_{\substack{x \to x_0 \\ y \to x_0}} \frac{G^{(\alpha)} (x,y)}{|x-y|^{\alpha -d}} = \lim\limits_{\alpha \to 2} \, \frac{\Gamma(\frac{d-\alpha}{2})}{2^\alpha \pi^{d/2} \Gamma(\frac{\alpha}{2})} = \frac{1}{d(d-2)|B_1|}.
\end{align*}

As a consequence, one can expect near-diagonal pointwise bounds for the Green function of a given nonlocal operator of fractional order $\alpha \in (0,2)$ with constants, which are independent of $\alpha$ away from zero. Establishing such results is one aim of the current paper. 

There exist several very detailed works on pointwise bounds for the Green function for nonlocal operators of fractional order such as \cite{CKS10a,BGR14}. The authors establish fine near-diagonal and boundary estimates, which are sharp as long as $\alpha \in (0,2)$ is fixed but these results contain constants, which degenerate as $\alpha \to 2-$. The reason for this phenomenon is that the main tool inside the proofs is the formula $G^{(\alpha)} (x,y) = \int_0^\infty p^{(\alpha)}_\Omega(t;x,y) \d t$, where $p^{(\alpha)}_\Omega$ is the corresponding Dirichlet heat kernel. Since the decay property of $p^{(\alpha)}_\Omega$ for large values of $|x-y|$ changes from polynomial to exponential at $\alpha =2$, this approach would need very careful tracking of the constants in order to lead to robust estimates. Our results are much weaker in many respects but they provide near-diagonal bounds which are robust in the sense that the constants in the bounds stay uniform as $\alpha \to 2-$. In this way, the corresponding near-diagonal bounds for second order operators can be recovered by taking the limit as $\alpha \to 2-$. Since the proofs do not rely on any estimate of the heat kernel $p^{(\alpha)}_\Omega$ we have a second advantage. We can establish rotational bounds for the Green function in situations where the heat kernel does not satisfy rotational bounds, see \Cref{ex:examples} below.

Let us explain our set-up and the main results. Throughout the paper, we always assume $k$ to be a measurable symmetric function on $\R^d \times \R^d \setminus \operatorname{diag} \to [0, \infty)$ satisfying 
\begin{align} \label{eq:Levy-cond}
\sup_{x \in \Rd} \int_{\Rd} \left( 1 \land |y-x|^2 \right) k(x,y) \dy  < +\infty.
\end{align}

\begin{example} \label{ex:alpha-stable}
	An important example satisfying \eqref{eq:Levy-cond} is given by
	\begin{align*}
	k^{(\alpha)}(x, y) = \tfrac{2^\alpha \Gamma(\frac{d+\alpha}{2})}{\pi^{d/2} |\Gamma(\frac{-\alpha}{2})|}  \, |y-x|^{-d-\alpha} \qquad (0 < \alpha < 2).
	\end{align*}
Note that the ratio $\tfrac{2^\alpha \Gamma(\frac{d+\alpha}{2})}{\pi^{d/2} |\Gamma(\frac{-\alpha}{2})|} / (2-\alpha)$ remains bounded as $\alpha \rightarrow 2$. Since we do not care about the behavior of this constant for $\alpha \to 0$, we will often just use $2-\alpha$ as a multiplicative constant.
\end{example}

Let $\Omega \subset \Rd$ be open. For measurable functions $u, v: \Omega \to \R$ we define the expression
\begin{align} \label{eq:bilinear_form_Omega}
\mathcal{E}_\Omega (u, v) 
= \int_\Omega \int_\Omega (u(y) - u(x))(v(y) - v(x)) \, k(x,y) \dy \dx,
\end{align}
provided that is it finite. If $k=k_{\alpha}$ from above, then we write $\mathcal{E}_{\Omega}^{\alpha}$ instead of $\mathcal{E}_{\Omega}$. Moreover, we set $\mathcal{E}(u, v) = \mathcal{E}_{\Rd}(u, v)$ resp. $\mathcal{E}^{\alpha}(u, v) = \mathcal{E}_{\Rd}^{\alpha}(u, v)$. Note that \eqref{eq:Levy-cond} implies that the double-integral \eqref{eq:bilinear_form_Omega} converges absolutely for given $u, v \in C_c^\infty(\Omega)$ (see \cite[Example 1.2.1]{FOT11}). We define an operator
\begin{equation*}
L\varphi(x) = 2 \,p.v. \int_{\Rd} (\varphi(y)-\varphi(x)) k(x, y) \dy.
\end{equation*}
We refer the reader to \Cref{sec:prelims} for definition of function spaces associated to $k$.

\begin{definition}\label{def:Green-function}
	Let $\Omega$ be a bounded open set in $\Rd$. A measurable function $G : \R^d \times \Rd \rightarrow [0, +\infty]$ is called a {\it Green function} of $L$ on $\Omega$ if $G(\cdot, y) \in L^{1}(\Omega)$ for each $y \in \Omega$, $G=0$ $\mathrm{a.e.}$ on $(\Rd \times \Rd) \setminus (\Omega \times \Omega)$, and
	\begin{align} \label{eq:Green_function}
	\int_{\Rd} G(x, y) \psi(x) \dx = \varphi(y)
	\end{align}
for every $y \in \Omega$, $\varphi \in H_{\Omega}(\Rd; k) \cap C(\Omega)$, and $\psi \in L^\infty(\Omega)$ with
\begin{equation} \label{eq:DP}
\mathcal{E}(\varphi, v) = \langle \psi, v \rangle \quad\text{for all } v \in H_{\Omega}(\Rd; k) \,.
\end{equation}
\end{definition}

\begin{remark*}{ \ } \vspace*{-2ex}
\begin{enumerate}
	\item \Cref{def:Green-function} is analogous to the one in \cite[Section 6]{LSW63}. 
	\item The above definition corresponds to the usual definition $G(x,y)$ $= \int_0^{\infty} p_\Omega(t;x,y) \dt$, where $ p_\Omega $ is the corresponding heat kernel. This can be easily seen in case of a L\'evy process $(X_t)_{t \geq 0}$ in $\Rd$ with an infinitesimal generator $L$, where the Green function $G$ satisfies
	\begin{align*}
	\langle G(\cdot, y), \psi \rangle
	&= \langle -LG(\cdot, y), \varphi \rangle
	= \left\langle -L\int_0^\infty p_t(\cdot, y)\dt, \varphi \right\rangle \\
	&= \left\langle -\int_0^\infty Lp_t(\cdot, y)\dt, \varphi \right \rangle
	= \left\langle -\int_0^\infty \partial_t p_t(\cdot, y)\dt, \varphi \right\rangle
	= \varphi(y)
	\end{align*}
for every $y \in \Omega$, $\varphi \in H_{\Omega}(\Rd; k) \cap C(\Omega)$, and $\psi \in L^{\infty}(\Omega)$ with \eqref{eq:DP}. In other words, a Green function solves the equation $-LG(\cdot, y) = \delta_y$, where $\delta_{y}$ is the Dirac measure at the point $y$. In case of $k^{(\alpha)}$ from \Cref{ex:alpha-stable}, the function $G$ is nothing but the Green function of the fractional Laplacian $(-\Delta)^{\alpha/2}$ in the domain $\Omega$.
\end{enumerate}
\end{remark*}

Our first result concerns the existence, uniqueness, and some uniform estimates of the Green function, which we establish under very mild assumptions on the kernel $k$. To this end, we formulate the following conditions $k$. Let $\cl, \cU > 0$ be given.

\begin{enumerate}
	\item [\mylabel{assum:E-lower}{$\mathcal{E}_\geq$}] 
	For every ball $B = B_r(x_0) \subset \Rd$ and every function $u \in L^2(B)$
	\begin{align*}
	\mathcal{E}_B (u, u) \geq \cl \mathcal{E}^{\alpha}_B (u, u) \,.
	\end{align*}

	\item [\mylabel{assum:U1}{U1}]
	For all $r > 0$
	\begin{align*}
	\sup_{x \in \Rd} \int_{\Rd} \left( r^2 \land |y-x|^2 \right) k(x,y) \dy
	\leq \cU r^{2-\alpha} \,.
	\end{align*}
\end{enumerate}

Note that \eqref{assum:E-lower} implies $\mathcal{E}(u, u) \geq \cl \mathcal{E}^{\alpha}(u, u)$ for every function $u \in L^{2}(\Rd)$. Indeed, it follows from $\mathcal{E}(u, u) \geq \mathcal{E}_{B}(u, u) \geq \cl \mathcal{E}^{\alpha}_{B}(u, u)$ and the monotone convergence theorem.

\begin{theorem} [Existence and uniqueness] \label{thm:existence}
	Let $0<\alpha_0 \leq \alpha<2$, $\cl, \cU > 0$, and assume that $k$ satisfies \eqref{assum:E-lower}. Let $\Omega$ be open and bounded. Then there exists a Green function $G$ of $L$ on $\Omega$. 
	Moreover, $G$ satisfies the following properties: for every $y \in \Omega$,
	\begin{align}
	&G(\cdot, y) \in W^{\beta/2, q}_\Omega(\Rd) 
	\quad\text{for all}~ \beta \in (0, \alpha) ~\text{and}~ q \in [1, d/(d-\alpha/2)), \nonumber \\
	&G(\cdot, y) \in L^{d/(d-\alpha)}_\mathrm{weak} (\Rd) \quad\text{with}~ 
	[G(\cdot, y)]_{L^{d/(d-\alpha)}_\mathrm{weak} (\Rd)} \leq C, \label{eq:weak_Lp}
	\end{align}
	where $C$ depends only on $d$, $\cl$, and $\alpha_0$, but not on $\alpha$, $y$, $\Omega$. Furthermore, if in addition $k$ satisfies \eqref{assum:U1}, then the Green function of $L$ on $\Omega$ is unique.
\end{theorem}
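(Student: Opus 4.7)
The plan is to construct $G(\cdot, y)$ by approximating $\delta_{y}$ with bounded densities, solving the resulting variational problems by Lax--Milgram, and extracting a limit via a Stampacchia-type truncation argument that exploits the robust Sobolev inequality hidden in \eqref{assum:E-lower}. Combining \eqref{assum:E-lower} with the sharp fractional Sobolev inequality for $k^{(\alpha)}$ (whose constant stays bounded away from $0$ as $\alpha \to 2{-}$ under the normalization in \Cref{ex:alpha-stable}), one has
\begin{equation*}
\mathcal{E}(u,u) \;\geq\; \cl\, \mathcal{E}^{\alpha}(u,u) \;\geq\; c_{0}\,\|u\|_{L^{2^{\ast}_{\alpha}}(\Rd)}^{2}
\qquad \text{for all } u \in H_{\Omega}(\Rd;k),
\end{equation*}
with $2^{\ast}_{\alpha} := 2d/(d-\alpha)$ and $c_{0} = c_{0}(d, \cl, \alpha_{0})$ uniform in $\alpha \in [\alpha_{0}, 2)$. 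For fixed $y \in \Omega$ and small $\epsilon > 0$, setting $f_{\epsilon} := |B_{\epsilon}(y)|^{-1}\mathbf{1}_{B_{\epsilon}(y)}$ and applying Lax--Milgram on $(H_{\Omega}(\Rd;k), \mathcal{E})$ produces a nonnegative $G_{\epsilon} \in H_{\Omega}(\Rd;k)$ satisfying $\mathcal{E}(G_{\epsilon}, v) = \langle f_{\epsilon}, v \rangle$ for every $v \in H_{\Omega}(\Rd;k)$ (nonnegativity by testing with $(G_{\epsilon})_{-}$).

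The core estimate is a Stampacchia test with the truncation $v_{M} := G_{\epsilon} \wedge M$, which lies in $H_{\Omega}(\Rd;k)$ since $G_{\epsilon} \geq 0$. Case analysis on $a, b \geq 0$ yields the pointwise inequality $(a - b)(a \wedge M - b \wedge M) \geq (a \wedge M - b \wedge M)^{2}$, hence $\mathcal{E}(G_{\epsilon}, v_{M}) \geq \mathcal{E}(v_{M}, v_{M})$. Since $\langle f_{\epsilon}, v_{M} \rangle \leq M$ and $v_{M} \geq M\,\mathbf{1}_{\{G_{\epsilon} \geq M\}}$, the coercive bound rearranges to
\begin{equation*}
M^{2}\,|\{G_{\epsilon} \geq M\}|^{2/2^{\ast}_{\alpha}} \;\leq\; \|v_{M}\|_{L^{2^{\ast}_{\alpha}}}^{2} \;\leq\; c_{0}^{-1}\, M,
\end{equation*}
that is, $|\{G_{\epsilon} \geq M\}| \leq C\, M^{-d/(d-\alpha)}$ uniformly in $\epsilon$ with $C = C(d, \cl, \alpha_{0})$.

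With this uniform weak-$L^{d/(d-\alpha)}$ bound the family $\{G_{\epsilon}\}$ is weakly precompact in $L^{q}_{\mathrm{loc}}$ for every $q < d/(d-\alpha)$; one extracts a subsequential limit $G(\cdot, y)$, which inherits \eqref{eq:weak_Lp}. To verify \eqref{eq:Green_function} for an admissible pair $(\varphi, \psi)$, one rewrites $\mathcal{E}(G_{\epsilon}, \varphi) = \mathcal{E}(\varphi, G_{\epsilon}) = \int_{\Omega} \psi\, G_{\epsilon}\dx$ by symmetry; the right-hand side converges to $\int_{\Omega} \psi\, G(\cdot, y) \dx$, while the left-hand side equals $|B_{\epsilon}(y)|^{-1}\int_{B_{\epsilon}(y)} \varphi \dx \to \varphi(y)$ by continuity of $\varphi$ at $y$. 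The Sobolev regularity $G(\cdot, y) \in W^{\beta/2, q}_{\Omega}(\Rd)$ for $\beta < \alpha$, $q < d/(d-\alpha/2)$, follows from refining the same truncation argument to control $[v_{M}]_{W^{\beta/2,2}}$ (robust for $\beta$ bounded away from $2$) and interpolating with the weak-$L^{d/(d-\alpha)}$ bound. For uniqueness, \eqref{assum:U1} combined with \eqref{assum:E-lower} places $\mathcal{E}$ in a robust De Giorgi--Nash--Moser setting, so every weak solution of $\mathcal{E}(\varphi, v) = \langle \psi, v \rangle$ with $\psi \in L^{\infty}(\Omega)$ admits a continuous representative; applied to the difference of two Green functions, $\int (G_{1} - G_{2})(x, y) \psi(x) \dx = 0$ for all $\psi \in L^{\infty}(\Omega)$ forces $G_{1}(\cdot, y) = G_{2}(\cdot, y)$ a.e.

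The main obstacle is ensuring that every constant above is uniform as $\alpha \to 2{-}$: this requires the robust fractional Sobolev constant (from the normalization in \Cref{ex:alpha-stable}), the pointwise truncation inequality (so that no constant is lost at the form level), and a robust De Giorgi--Nash--Moser continuity statement underlying the uniqueness claim. The fractional Sobolev regularity assertion and the robust continuity of weak solutions are both substantive refinements of the same truncation machinery that must be built to work under the weak assumptions \eqref{assum:E-lower} and \eqref{assum:U1} alone.
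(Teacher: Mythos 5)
Your overall architecture coincides with the paper's: regularize the Dirac data by averages over small balls (the paper's \eqref{eq:regularized-G}), solve by Lax--Milgram using coercivity from \eqref{assum:E-lower} via \cite{FKV15}, pass to the limit in the defining identity by writing $\mathcal{E}(G_\epsilon,\varphi)=\mathcal{E}(\varphi,G_\epsilon)=\langle\psi,G_\epsilon\rangle$ and using continuity of $\varphi$ at $y$, and prove uniqueness from solvability of \eqref{eq:DP} together with interior continuity of weak solutions with $L^\infty$ data (the paper invokes \cite{FKV15} and \cite{KaWe22} for precisely these two inputs). Where you genuinely differ is in the derivation of the central uniform estimate \eqref{eq:weak_Lp}: you test with the Stampacchia truncation $v_M=G_\epsilon\wedge M$, use the elementary inequality $(a-b)(a\wedge M-b\wedge M)\geq(a\wedge M-b\wedge M)^2$ and the robust Sobolev inequality to get $M^2|\{G_\epsilon\geq M\}|^{2/2^{\ast}}\leq c_0^{-1}M$ with $2^{\ast}=2d/(d-\alpha)$, whereas the paper tests with $\max\{0,1/t-1/G_\rho\}$ and uses the logarithmic inequality \Cref{lem:inequality2} together with \Cref{thm:Sobolev} applied to $\log(G_\rho/t\vee 1)$. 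Both yield the same weak-$L^{d/(d-\alpha)}$ bound with constant depending only on $d$, $\cl$, $\alpha_0$; your route is the more standard truncation argument and is correct (note $G_\epsilon\wedge M\in H_\Omega(\Rd;k)$ since truncation does not increase the energy), and weak $L^q$-compactness for some $q\in(1,d/(d-\alpha))$ indeed suffices to pass to the limit against $\psi\in L^\infty(\Omega)$.

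The one substantive thin spot is the claim $G(\cdot,y)\in W^{\beta/2,q}_\Omega(\Rd)$ for all $\beta\in(0,\alpha)$, $q\in[1,d/(d-\alpha/2))$. The paper proves this in \Cref{prop:uniform_estimates} by testing with the power-type function $G_\rho(1+G_\rho^s)^{-1/s}$ and using \Cref{lem:inequality1}, which gives a uniform bound on $[(1+G_\rho)^{(1-s)/2}]_{W^{\alpha/2,2}(\Rd)}$ and then, after a H\"older splitting, the desired seminorm bound. Your sentence ``control $[v_M]_{W^{\beta/2,2}}$ and interpolate with the weak-$L^{d/(d-\alpha)}$ bound'' is not yet an argument: plain interpolation cannot produce membership of $G(\cdot,y)$ itself in $W^{\beta/2,q}$, since $G(\cdot,y)$ is in general not in $W^{\alpha/2,2}$ and the truncations only satisfy the level-dependent bound $\mathcal{E}(G_\epsilon\wedge M,G_\epsilon\wedge M)\leq M$. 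What does work is a Boccardo--Gallou\"et-type dyadic summation: combine that level-energy bound with your level-set estimate $|\{G_\epsilon\geq M\}|\leq CM^{-d/(d-\alpha)}$ over $M=2^j$ and split the $W^{\beta/2,q}$ double integral according to the sizes of $G_\epsilon$ at the two points; this recovers exactly the exponent range in the statement and is the content the paper packages into its choice of test function. As written this step is a gap, though a standard and fixable one; everything else in your proposal is sound and robust as $\alpha\to 2-$.
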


We refer the reader to \Cref{sec:prelims} for definition of the function spaces $W^{\beta/2, q}_\Omega(\Rd)$ and $L^{d/(d-\alpha)}_\mathrm{weak} (\Rd)$.

The next results provide pointwise upper and lower bounds for Green functions which are robust in the sense that the constants stay uniform as $\alpha \to 2-$. In order to state the results, we formulate more assumptions on the kernel $k$. Let $\cK, \cH> 0$ be given positive constants.
\begin{enumerate}
	\item [\mylabel{assum:U2}{U2}]
	For almost every $x, y \in \Rd$,
	\begin{equation*}
	k(x, y) \leq \cK (2-\alpha) |y-x|^{-d-\alpha} \,.
	\end{equation*}
	
	\item [\mylabel{assum:H}{H}] 
	(Annulus Harnack inequality for Green function) Let $G$ be a Green function of $L$ on $\Omega$. For every ball $B_{2r}(y) \Subset \Omega$ 
	\begin{align*}
	\sup_{B_{2r}(y) \setminus B_r(y)} G(\cdot, y) 
	\leq \cH \inf_{B_{2r}(y) \setminus B_r(y)} G(\cdot, y).
	\end{align*}
	
\end{enumerate}

We will discuss these conditions in detail below. Let us recall that we assume $d \geq 3$, which in particular implies $d > \alpha$. We do not discuss possible extensions, e.g., the cases $d=2, \alpha \in (0,2)$ or $d=1, \alpha \in [1, 2)$.

\begin{theorem} [Upper bound] \label{thm:upper-bound}
Let $0 < \alpha_0 \leq \alpha < 2$, $\cl, \cK > 0$, and assume that $k$ satisfies \eqref{assum:E-lower} and \eqref{assum:U2}. Then the Green function $G$ of $L$ on $\Omega$ satisfies
\begin{align} \label{eq:upper-bound}
G(x,y) \leq C |x-y|^{\alpha-d} \quad\text{for all} ~ x, y \in \Omega
\end{align}
for some constant $C > 0$ depending only on $d$, $\cl$, $\cK$, and $\alpha_0$, but not on $\alpha$, $\Omega$.
\end{theorem}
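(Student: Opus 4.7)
My strategy is to combine the weak $L^{d/(d-\alpha)}$-estimate from \Cref{thm:existence} with a robust local boundedness estimate for nonlocal operators. Fix $x, y \in \Omega$ with $x \neq y$, and set $r = |x-y|/4$. Since $y \notin B_r(x)$, the function $u := G(\cdot,y)$, extended by zero to $\Omega^c$, is a nonnegative weak solution of $-Lu = 0$ in $B_r(x)$ (by \Cref{def:Green-function} applied to test functions supported in $B_r(x)$, for which the right-hand side $v(y)$ vanishes). It therefore suffices to bound $u(x) \leq \sup_{B_{r/2}(x)} u$.

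The first main step will be to establish (or invoke from recent work on robust nonlocal regularity) an $L^\infty$--$L^p$ estimate of the form
\begin{equation*}
\sup_{B_{r/2}(x)} u \leq C \left[ \left( r^{-d} \int_{B_r(x)} u^p \, \mathrm{d}z \right)^{\!1/p} + r^{\alpha} \int_{\Rd \setminus B_r(x)} \frac{u(z)}{|z-x|^{d+\alpha}} \, \mathrm{d}z \right]
\end{equation*}
for some $p \in [1, d/(d-\alpha))$, with $C$ depending only on $d$, $\cl$, $\cK$, and $\alpha_0$. Assumptions \eqref{assum:E-lower} and \eqref{assum:U2} place $\mathcal{E}$ between constant multiples of $\mathcal{E}^{\alpha}$ at the level required for a De~Giorgi or Moser iteration, and the $(2-\alpha)$ normalization in \eqref{assum:U2} is precisely what should keep the constants bounded as $\alpha \to 2-$.

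The second step is a layer-cake computation using \eqref{eq:weak_Lp}: for any $R > 0$ and $p \in [1, d/(d-\alpha))$ one obtains
\begin{equation*}
\int_{B_R(x)} u \, \mathrm{d}z \leq C R^{\alpha}, \qquad \int_{B_R(x)} u^p \, \mathrm{d}z \leq C R^{d-p(d-\alpha)},
\end{equation*}
by splitting at the level $\lambda_\ast \sim R^{-(d-\alpha)}$ where $|B_R|$ balances $(C/\lambda)^{d/(d-\alpha)}$. The second bound controls the local term in the previous display by $C r^{\alpha-d}$. For the tail I will use a dyadic decomposition of $\Rd \setminus B_r(x)$ into annuli $B_{r 2^{k+1}}(x) \setminus B_{r 2^k}(x)$; using the first bound, each annulus contributes at most $r^\alpha (r2^k)^{-d-\alpha} \cdot C(r2^{k+1})^\alpha = C' \, 2^{-kd}\, r^{\alpha-d}$, so the geometric sum over $k \geq 0$ gives a tail bound of $C r^{\alpha-d}$. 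Combining produces $G(x,y) \leq \sup_{B_{r/2}(x)} u \leq C r^{\alpha-d} = C'' |x-y|^{\alpha-d}$.

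The main obstacle I anticipate is the first step: producing the local boundedness estimate with constants that do not blow up as $\alpha \to 2-$. This requires quantitative versions of the fractional Caccioppoli and Sobolev inequalities whose constants are stable in this limit, which is exactly the role of the $(2-\alpha)$ normalization built into \eqref{assum:U2} and \Cref{ex:alpha-stable}. A minor additional point is the case $B_r(x) \not\subset \Omega$: there $u \equiv 0$ on $B_r(x) \setminus \Omega$, which only decreases the tail, so the same argument applies without modification.
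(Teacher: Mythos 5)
Your overall route is the one the paper takes: a robust local boundedness estimate with an $L^q$-average ($q$ close to $1$) plus a nonlocal tail, combined with the uniform weak $L^{d/(d-\alpha)}$ bound \eqref{eq:weak_Lp} to control both the local term and the tail; your dyadic-annuli estimate of the tail is an inessential variant of the paper's level-set/Fubini splitting, and the paper indeed has to prove the $L^q$-version of the local boundedness itself (Caccioppoli estimate plus Moser iteration, \Cref{theo:local boundedness}), precisely because, as you anticipate, the usual $L^2$-version does not apply to Green functions. However, there are two genuine gaps. First, you apply the estimate directly to $u=G(\cdot,y)$. \Cref{def:Green-function} is a dual characterization (an identity tested against data $\psi$), so it does not by itself yield $\mathcal{E}(G(\cdot,y),\varphi)=0$ for $\varphi$ supported in $B_r(x)$; more importantly, the Caccioppoli/Moser machinery requires $u\in H(\Rd;k)$, whereas $G(\cdot,y)$ is only known to lie in $W^{\beta/2,q}_\Omega(\Rd)$ for $q<d/(d-\alpha/2)<2$, so it need not have finite energy. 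The paper avoids this by running the entire argument on the regularized Green functions $G_\rho(\cdot,y)\in H_\Omega(\Rd;k)$, which satisfy \eqref{eq:regularized-G} and hence are genuinely harmonic in $B_r(x)$ for small $\rho$, and only lets $\rho\to0$ at the very end (also using interior regularity to pass from the essential supremum to the pointwise value at $x$); you should do the same.

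Second, your dismissal of the case $B_r(x)\not\subset\Omega$ is not correct as stated. The issue there is not the size of the tail: it is that the test functions needed for the iteration in $B_r(x)$ lie in $H_{B_r(x)}(\Rd;k)$ and need not vanish on $B_r(x)\setminus\Omega$, hence are not admissible in the weak formulation \eqref{eq:regularized-G}, so the subsolution property of the zero extension across $\partial\Omega$ does not come ``without modification''. One could try to verify it directly by splitting a nonnegative test function into its parts inside and outside $\Omega$ and using $u\geq 0$, $u=0$ on $\Omega^c$, but that splitting needs justification in the energy space for a rough boundary. The paper instead compares $G_\rho$ with the regularized Green function $\tilde{G}_\rho$ of an enlarged domain $\tilde{\Omega}\supset\Omega$ containing $B_r(x)$, proving $G_\rho\leq\tilde{G}_\rho$ by testing with $(G_\rho-\tilde{G}_\rho)_+$ and using \eqref{assum:E-lower}, and then applies the interior bound to $\tilde{G}_\rho$. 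A further small point: to keep constants independent of $\alpha$, the exponent in the $L^q$-average and in your layer-cake computation must be fixed away from $d/(d-\alpha)$ uniformly in $\alpha$, e.g.\ $q=(1+d/(d-\alpha_0))/2$ as in the paper.
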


\begin{theorem} [Lower bound] \label{thm:lower-bound}
Let $0 < \alpha_0 \leq \alpha < 2$, $\cl, \cU, \cH > 0$, and assume that $k$ satisfies \eqref{assum:E-lower}, \eqref{assum:U1}, \eqref{assum:H}. Then the Green function $G$ of $L$ on $\Omega$ satisfies
\begin{align} \label{eq:lower-bound}
G(x,y) \geq C |x-y|^{\alpha-d} \quad\text{for all} ~ x, y \in \Omega 
~\text{with}~ |x-y| \leq \mathrm{dist}(y, \partial \Omega)/2,
\end{align}
for some constant $C > 0$ depending only on $d$, $\cl$, $\cU$, $\cH$, and $\alpha_0$, but not on $\alpha$, $\Omega$.
\end{theorem}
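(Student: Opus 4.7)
The plan has three steps: establish an integral lower bound $\int_{B_R(y)} G(\cdot,y)\,\dx \gtrsim R^\alpha$ for $R \leq \operatorname{dist}(y,\partial\Omega)$, combine it with the matching integral upper bound $\int_{B_s(y)} G(\cdot,y)\,\dx \lesssim s^\alpha$ coming from \eqref{eq:weak_Lp} of \Cref{thm:existence}, and iterate the annulus Harnack inequality \eqref{assum:H} to convert the resulting average lower bound on a thick annulus into a pointwise lower bound.

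The maximum principle $G_\Omega(\cdot,y) \geq G_{B_R(y)}(\cdot,y)$ for $B_R(y) \subset \Omega$ (which follows by testing the weak equation for the difference with its own negative part, using that truncation does not increase $\mathcal E$) reduces the integral lower bound to the case $\Omega = B_R(y)$. Take a smooth bump $\varphi \in C_c^\infty(B_{R/2}(y))$ with $\varphi(y) = 1$ and $\|D^\beta\varphi\|_\infty \leq CR^{-|\beta|}$ for $|\beta| \leq 2$. A Taylor expansion at $x$ together with \eqref{assum:U1} --- which controls both the near-diagonal contribution via $\int_{|z-x|\leq R}|z-x|^2 k(x,z)\,\d z \leq \cU R^{2-\alpha}$ and the tail via $\int_{|z-x|>R} k(x,z)\,\d z \leq \cU R^{-\alpha}$, with constants bounded as $\alpha \to 2-$ --- yields $\|L\varphi\|_\infty \leq CR^{-\alpha}$. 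Setting $\psi := L\varphi \in L^\infty(\Omega)$, the defining identity \eqref{eq:Green_function} gives
$$1 = \varphi(y) = \int_{B_R(y)} G(x,y)\,\psi(x)\,\dx \leq \|\psi\|_\infty \int_{B_R(y)} G(x,y)\,\dx,$$
so that $\int_{B_R(y)} G(\cdot,y)\,\dx \geq R^\alpha/C$.

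Kolmogorov's inequality applied to \eqref{eq:weak_Lp} delivers $\int_{B_s(y)} G(\cdot,y)\,\dx \leq C' s^\alpha$ uniformly. Fix $t_0 \in (0,1/2)$ depending only on $d,\cl,\cU,\alpha_0$ so that $C' t_0^{\alpha_0} \leq 1/(2C)$; then for any $R \leq \operatorname{dist}(y,\partial\Omega)$ the thick annulus $A := B_R(y) \setminus B_{t_0R}(y)$ satisfies $\int_A G(\cdot,y) \geq R^\alpha/(2C)$, so the average of $G(\cdot,y)$ on $A$ is $\gtrsim R^{\alpha-d}$, and in particular $\sup_A G(\cdot,y) \gtrsim R^{\alpha-d}$. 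Decomposing $A$ into $N = O(\log(1/t_0))$ dyadic sub-annuli of the form $B_{2\rho}(y) \setminus B_\rho(y) \Subset \Omega$ and chaining \eqref{assum:H} across neighboring sub-annuli (using continuity of $G(\cdot,y)$ across their common spherical boundaries) produces the full Harnack estimate $\sup_A G(\cdot,y) \leq \cH^{O(N)} \inf_A G(\cdot,y)$, hence $\inf_A G(\cdot,y) \gtrsim R^{\alpha-d}$. Finally, given $x$ with $r := |x-y| \leq \operatorname{dist}(y,\partial\Omega)/2$, apply the above with $R = 2r$: since $t_0 \leq 1/2$ one has $x \in A$, whence $G(x,y) \gtrsim (2r)^{\alpha-d} \simeq |x-y|^{\alpha-d}$. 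The boundary case $r = \operatorname{dist}(y,\partial\Omega)/2$ follows by exhausting $\Omega$ with slightly smaller open sub-domains.

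The main obstacle is the robust-in-$\alpha$ bound $\|L\varphi\|_\infty \leq CR^{-\alpha}$ in the integral lower bound: assumption \eqref{assum:U1} is precisely tailored to deliver the correct scaling with a constant depending only on $d$ and $\cU$, independent of $\alpha \in [\alpha_0,2)$. A secondary point is that the number of iterations $N$ in the Harnack chain depends on $\alpha_0$ through $t_0$; since $\alpha_0$ is fixed throughout, the chain constant $\cH^{O(N)}$ ultimately depends only on $d,\cl,\cU,\cH,\alpha_0$, as required.
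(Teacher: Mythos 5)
The decisive step of your argument --- the bound $\|L\varphi\|_\infty \leq C R^{-\alpha}$ for a smooth bump $\varphi$, obtained ``by Taylor expansion together with \eqref{assum:U1}'' --- does not follow from the hypotheses of the theorem, and this breaks the integral lower bound $\int_{B_R(y)}G(\cdot,y)\dx \gtrsim R^\alpha$ on which everything else rests. Condition \eqref{assum:U1} only controls $\sup_x\int (r^2\land|y-x|^2)k(x,y)\dy$. After a second-order Taylor expansion the quadratic remainder and the tail are indeed handled by \eqref{assum:U1}, but you are left with the first-order term $\nabla\varphi(x)\cdot\int_{|y-x|\leq R}(y-x)\,k(x,y)\dy$. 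The kernel is only assumed symmetric in the joint sense $k(x,y)=k(y,x)$, not pointwise symmetric in the sense $k(x,x+h)=k(x,x-h)$, so there is no cancellation in this term; nor is it absolutely controlled: even for the model kernel one has $\int_{|h|\leq R}|h|\,k^{(\alpha)}(x,x+h)\,\d h=\infty$ for $\alpha\geq 1$, and under \eqref{assum:U1} the dyadic shell $\{2^{-j-1}R<|h|\leq 2^{-j}R\}$ may contribute as much as $\sim 2^{j(\alpha-1)}R^{1-\alpha}$, which is not summable for $\alpha\geq 1$ --- exactly the regime relevant for robustness as $\alpha\to 2-$. Consequently, for a general kernel satisfying only \eqref{assum:E-lower} and \eqref{assum:U1} there need not exist any $\psi\in L^\infty(\Omega)$ satisfying \eqref{eq:DP} with your $\varphi$, let alone one with $\|\psi\|_\infty\leq CR^{-\alpha}$ uniformly in $\alpha$, so the identity $1=\varphi(y)=\int G\psi\,\dx$ from \eqref{eq:Green_function} cannot be invoked. (Your computation is correct for kernels with the extra pointwise symmetry $k(x,x+h)=k(x,x-h)$, for $\alpha<1$, or under a drift/Kato-type condition on the odd part of $k(x,x+\cdot)$ --- but none of these is assumed.)

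This obstruction is precisely why the paper's proof never estimates $L\eta$ pointwise: it tests the regularized identity \eqref{eq:regularized-G} with a cutoff $\eta$, so that $1=\mathcal{E}(G_\rho,\eta)$, and bounds the resulting bilinear pieces using only quadratic quantities of the form $\int(1\land |h|^2/r^2)k\lesssim r^{-\alpha}$, the weak-$L^{d/(d-\alpha)}$ estimate \eqref{eq:weak_Lp} (via \Cref{lem:small_term}), and Caccioppoli-type estimates for $G_\rho$ itself (\Cref{lem:global_term} and \Cref{lem:local_term}), arriving directly at $\sup_{A^{3r/2}_{\varepsilon r/2}}G_\rho\gtrsim r^{\alpha-d}$ without ever constructing a barrier pair $(\varphi,\psi)$. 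The remaining parts of your scheme --- the $L^1$ bound $\int_{B_s(y)}G(\cdot,y)\dx\lesssim s^\alpha$ from \eqref{eq:weak_Lp}, the passage to a thick annulus, and the chained annulus Harnack inequality --- do match the paper (cf.\ \Cref{lem:Harnack}) up to the minor care needed so that the chained annuli are compactly contained in $\Omega$; but as written the proof stands or falls with the integral lower bound, and that step has a genuine gap.
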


We remark that the assumption \eqref{assum:U2} in \Cref{thm:upper-bound} cannot be replaced by \eqref{assum:U1}, at least when $\alpha \leq (d-1)/2$. This can be seen from the observation that the singular measure
\begin{align*}
\mu(x, \d y) = (2-\alpha) \sum_{i=1}^d |y_i-x_i|^{-1-\alpha} \prod_{j \neq i} \delta_{x_i}(\d y_i)
\end{align*}
satisfies \eqref{assum:E-lower} and \eqref{assum:U1}, but the corresponding Green function does not satisfy \eqref{eq:upper-bound}. See the last paragraph of \cite{BoSz05}.

Let us discuss the aforementioned conditions on $k$. 
\begin{enumerate}[(i)]
\item
Let $k^{(\alpha)}(x, y)$ be the function defined in \Cref{ex:alpha-stable}. Let $\alpha_0 \in (0,2)$ and assume $\alpha \in [\alpha_0, 2)$. Then there are constants $\cl$, $\cU$, $\cK$, and $\cH$ such that all of the aforementioned conditions hold for every $\alpha \in [\alpha_0, 2)$. This means that the conditions hold in a robust fashion as $\alpha \to 2-$, i.e., constants do not degenerate.

\item
If $k$ satisfies the condition \eqref{assum:U2} for $\alpha \in [\alpha_0, 2)$, then the condition \eqref{assum:U1} holds for $\cU = 2|\mathbb{S}^{d-1}| \alpha_0^{-1}\cK$. 
\end{enumerate}

It may not be always easy to check whether the condition \eqref{assum:H} holds for a given function $k$. However, it is known that the condition \eqref{assum:H} holds for a large class of functions $k$. Let us consider the following condition. Let $\cJ > 0$ be given.
\begin{enumerate}
\item [\mylabel{assum:UJS}{UJS}]
For almost every $x, y \in \Rd$ with $x\neq y$ and $0 < r \leq |x-y|/2$
\begin{equation*}
k(x,y) \leq \cJ \fint_{B_r(x)} k(z, y) \,\d z.
\end{equation*}
\end{enumerate}

\begin{enumerate}[(i)]
\item[(iii)]
It is known \cite[Theorem 11.14]{Sch20} that the Harnack inequality holds under the conditions \eqref{assum:E-lower}, \eqref{assum:U2}, and \eqref{assum:UJS}. The condition \eqref{assum:H} follows from this result applied to a Green function together with a standard covering argument.
\end{enumerate}

\begin{corollary} \label{cor:bounds}
Let $0 < \alpha_0 \leq \alpha < 2$, $\cl, \cK, \cJ > 0$, and assume that $k$ satisfies \eqref{assum:E-lower}, \eqref{assum:U2}, and \eqref{assum:UJS}. Then the Green function of $L$ on $\Omega$ satisfies \eqref{eq:upper-bound} and \eqref{eq:lower-bound}.
\end{corollary}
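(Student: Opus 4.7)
The plan is to derive the corollary by checking that the hypotheses of \Cref{thm:upper-bound} and \Cref{thm:lower-bound} both follow from \eqref{assum:E-lower}, \eqref{assum:U2}, and \eqref{assum:UJS}. The upper bound \eqref{eq:upper-bound} is immediate: the hypotheses \eqref{assum:E-lower} and \eqref{assum:U2} of \Cref{thm:upper-bound} are part of the corollary's assumptions, so \Cref{thm:upper-bound} applies and gives \eqref{eq:upper-bound} with a constant depending only on $d$, $\cl$, $\cK$, and $\alpha_0$.

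For the lower bound I must verify \eqref{assum:U1} and \eqref{assum:H}. The bound \eqref{assum:U1} is the short computation indicated in item (ii) of the discussion following \Cref{thm:lower-bound}: splitting $\int_{\Rd}(r^2 \wedge |y-x|^2) k(x,y)\dy$ at $|y-x|=r$, using \eqref{assum:U2} on each piece, and integrating in polar coordinates yields a bound of the form $\cU r^{2-\alpha}$ with $\cU = 2|\mathbb{S}^{d-1}|\alpha_0^{-1}\cK$. The key point is that the $(2-\alpha)$ prefactor in \eqref{assum:U2} exactly compensates the $(2-\alpha)^{-1}$ produced by integrating $s^{1-\alpha}$ near zero, while the tail integral contributes $\alpha^{-1} \leq \alpha_0^{-1}$; hence $\cU$ is uniform in $\alpha \in [\alpha_0, 2)$.

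The substantive step is the annulus Harnack inequality \eqref{assum:H}. By \cite[Theorem 11.14]{Sch20}, the hypotheses \eqref{assum:E-lower}, \eqref{assum:U2}, and \eqref{assum:UJS} yield a scale-invariant elliptic Harnack inequality for non-negative $L$-harmonic functions, with a constant depending only on $d$, $\cl$, $\cK$, $\cJ$, and $\alpha_0$; the delicate point of the whole corollary is that this constant remains bounded as $\alpha \to 2-$, which is precisely the robust formulation given in \cite{Sch20}. I would apply this to $G(\cdot, y)$, which by \Cref{def:Green-function} is non-negative on $\Rd$ and $L$-harmonic on $\Omega \setminus \{y\}$ (the latter being the standard reformulation of $-L G(\cdot, y) = \delta_y$ tested against $\psi$ supported away from $y$). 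Fix a ball $B_{2r}(y) \Subset \Omega$ and cover the closed annulus $\overline{B_{2r}(y) \setminus B_r(y)}$ by finitely many balls $B_{r/16}(z_i)$, the number $N$ depending only on $d$, arranged so that any two of them can be joined by a short chain of overlapping balls of the same radius. Each concentric $B_{r/8}(z_i)$ sits in a compact subset of $\Omega \setminus \{y\}$, so the Harnack inequality applies to each individual ball. Iterating along the chains produces \eqref{assum:H} with a constant $\cH$ depending only on $d$, $\cl$, $\cK$, $\cJ$, and $\alpha_0$.

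With \eqref{assum:U1} and \eqref{assum:H} secured, \Cref{thm:lower-bound} yields \eqref{eq:lower-bound} and the corollary is proved. The main obstacle, and the only part that requires more than bookkeeping, is the $\alpha$-uniformity of the Harnack constant in \cite[Theorem 11.14]{Sch20}; once this is quoted in the correct form, the rest of the argument is standard.
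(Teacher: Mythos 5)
Your argument is exactly the paper's (implicit) proof of this corollary: the upper bound comes straight from \Cref{thm:upper-bound}, \eqref{assum:U1} follows from \eqref{assum:U2} by the polar-coordinate computation giving $\cU = 2|\mathbb{S}^{d-1}|\alpha_0^{-1}\cK$, and \eqref{assum:H} is obtained from the robust Harnack inequality of \cite[Theorem 11.14]{Sch20} applied to $G(\cdot,y)$, which is harmonic in $\Omega\setminus\{y\}$, via a covering/chaining argument, after which \Cref{thm:lower-bound} applies. One small imprecision in your covering step: for $z_i$ on or near the outer sphere $\partial B_{2r}(y)$ the claim that $B_{r/8}(z_i)\subset\Omega$ is not justified by $B_{2r}(y)\Subset\Omega$ alone, since compact containment gives no room of order $r$ beyond $B_{2r}(y)$; this is harmless because in the proof of \Cref{thm:lower-bound} the annulus Harnack inequality is only invoked (through \Cref{lem:Harnack}) on annuli whose outer radius is at most $3r/2$ with $2r\leq\operatorname{dist}(y_0,\partial\Omega)$, so all Harnack balls of radius comparable to $r$ stay in $\Omega\setminus\{y\}$, but as a verification of \eqref{assum:H} for an arbitrary ball $B_{2r}(y)\Subset\Omega$ you should either add such a quantitative room hypothesis or choose the chain more carefully near the outer boundary.
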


The last result we provide is  about the symmetry of the Green function.

\begin{theorem} [Symmetry] \label{thm:symm}
Let $\alpha \in (0,2)$ and $\cl, \cU > 0$. Assume that $k$ satisfies \eqref{assum:E-lower} and \eqref{assum:U1}. Then the Green function $G$ of $L$ on $\Omega$ is symmetric, i.e., $G(x, y) = G(y, x)$ for all $x, y \in \Omega$.
\end{theorem}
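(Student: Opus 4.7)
The plan is to exploit the symmetry $k(x,y)=k(y,x)$ of the kernel, which makes $\mathcal{E}$ a symmetric bilinear form, together with the weak characterisation in \Cref{def:Green-function}. First I would fix two source terms $\psi_1,\psi_2 \in L^\infty(\Omega)$ and apply the Lax--Milgram lemma on the Hilbert space $H_{\Omega}(\Rd;k)$ --- coercivity following from \eqref{assum:E-lower} together with the fractional Poincaré inequality on the bounded set $\Omega$ --- to produce unique $\varphi_i \in H_{\Omega}(\Rd;k)$ with $\mathcal{E}(\varphi_i, v) = \langle \psi_i, v\rangle$ for every $v \in H_{\Omega}(\Rd;k)$. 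Interior Hölder regularity for weak solutions of symmetric nonlocal forms, available uniformly in $\alpha \in [\alpha_0,2)$ under \eqref{assum:E-lower} and \eqref{assum:U1}, then gives $\varphi_i \in C(\Omega)$, so that \Cref{def:Green-function} yields the pointwise representation
\begin{equation*}
\varphi_i(y) = \int_\Omega G(x,y)\,\psi_i(x) \dx, \qquad y \in \Omega, \quad i=1,2.
\end{equation*}

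The core computation then runs as follows. I would test the weak equation for $\varphi_1$ against $v = \varphi_2 \in H_{\Omega}(\Rd;k)$ and exploit the symmetry $\mathcal{E}(u,v) = \mathcal{E}(v,u)$ inherited from $k(x,y)=k(y,x)$:
\begin{equation*}
\int_\Omega \psi_1(y)\varphi_2(y) \dy \;=\; \mathcal{E}(\varphi_1,\varphi_2) \;=\; \mathcal{E}(\varphi_2,\varphi_1) \;=\; \int_\Omega \psi_2(y)\varphi_1(y) \dy.
\end{equation*}
Inserting the Green-function representation of each $\varphi_i$ into both sides and applying Fubini I would obtain
\begin{equation*}
\int_\Omega\!\!\int_\Omega \bigl(G(x,y) - G(y,x)\bigr) \psi_1(y) \psi_2(x) \dx \dy = 0,
\end{equation*}
and since $\psi_1,\psi_2 \in L^\infty(\Omega)$ are arbitrary, this forces $G(x,y)=G(y,x)$ for a.e.\ $(x,y) \in \Omega\times\Omega$. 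Because the Green function is unique in the sense of \Cref{thm:existence} (which is exactly where both \eqref{assum:E-lower} and \eqref{assum:U1} are used), one may replace $G$ by the symmetric representative $\tfrac12\bigl(G(x,y)+G(y,x)\bigr)$, so that the pointwise identity asserted in the theorem holds for every $x,y \in \Omega$.

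The step I expect to be the main obstacle is the continuity claim $\varphi_i \in C(\Omega)$, which is required to invoke \Cref{def:Green-function}. If a direct reference to interior Hölder regularity at this robust generality is not available, my fall-back plan is to first take $\psi_1,\psi_2 \in C_c^\infty(\Omega)$ --- so that standard smoothing properties of the resolvent $(\lambda-L)^{-1}$ or of the associated Markovian semigroup produce continuous solutions --- derive the a.e.\ symmetry as above, and finally extend to arbitrary $\psi_i \in L^\infty(\Omega)$ by density in $L^2(\Omega)$ together with the energy estimate $\mathcal{E}(\varphi_i,\varphi_i) \lesssim \|\psi_i\|_{L^2}^{2}$ provided by \eqref{assum:E-lower} and Poincaré.
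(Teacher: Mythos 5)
Your route to \emph{almost-everywhere} symmetry is sound and is genuinely different from the paper's argument: solve $\mathcal{E}(\varphi_i,v)=\langle\psi_i,v\rangle$ by Lax--Milgram (coercivity from \eqref{assum:E-lower} as in \cite{FKV15}), get $\varphi_i\in C(\Omega)$ from the H\"older estimates of \cite{KaWe22} (this is exactly how the paper proves uniqueness, so \eqref{assum:U1} enters here), use \eqref{eq:Green_function} to represent $\varphi_i$, and test the two equations against each other using the symmetry of $\mathcal{E}$; Fubini is legitimate because $\sup_{y}\|G(\cdot,y)\|_{L^1(\Omega)}<\infty$ by \eqref{eq:weak_Lp} and \eqref{eq:weak_Lq_embedding}. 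This cleanly gives $G(x,y)=G(y,x)$ for a.e.\ $(x,y)\in\Omega\times\Omega$.

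The gap is the final upgrade to ``for all $x,y\in\Omega$'', which is precisely the point the paper's proof is organized around. Your suggestion to replace $G$ by the symmetric representative $\tfrac12\bigl(G(x,y)+G(y,x)\bigr)$ and invoke uniqueness is circular: to check that the symmetrized kernel is a Green function in the sense of \Cref{def:Green-function} you would need $\int_\Omega G(y,x)\psi(x)\dx=\varphi(y)$ for \emph{every} $y\in\Omega$, i.e.\ the defining identity in the second variable, which the a.e.\ statement only yields for a.e.\ $y$; moreover, the uniqueness argument in \Cref{thm:existence} upgrades a.e.\ equality to everywhere equality only via continuity of both candidates in the first variable off the diagonal, and continuity of $x\mapsto\tfrac12\bigl(G(x,y)+G(y,x)\bigr)$ would require continuity of $G$ in its \emph{second} variable, which is not known at this stage. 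More basically, modifying $G$ on a null set says nothing about the values $G(x_0,y_0)$ of the actual Green function, and a.e.\ symmetry in the product plus separate continuity in the first variable alone does not allow passage to a given pair $(x_0,y_0)$ --- one needs some uniformity in the second variable. The paper supplies exactly this missing ingredient by working with the regularized Green functions: from \eqref{eq:regularized-G},
\begin{equation*}
a_{ij}=\fint_{B_{\rho_i}(y)}G_{\sigma_j}(z,x)\,\d z=\mathcal{E}\bigl(G_{\rho_i}(\cdot,y),G_{\sigma_j}(\cdot,x)\bigr)=\fint_{B_{\sigma_j}(x)}G_{\rho_i}(z,y)\,\d z,
\end{equation*}
and the two iterated limits, which equal $G(y,x)$ and $G(x,y)$, are identified by interchanging limits, justified by the H\"older estimates of \cite{KaWe22} for $G_{\rho_i}(\cdot,y)$ on $B_{\sigma_j}(x)$ \emph{uniform in} $i$. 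To complete your duality argument you would have to prove an analogous equicontinuity (or continuity of $G$ in the second variable) statement; without it, your proof establishes only the a.e.\ version of the theorem.
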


Let us provide some examples of $k$.

\begin{example} \label{ex:examples}
\begin{enumerate}[(i)]
\item
Let $\mathcal{C} = \lbrace h \in \Rd : |\frac{h}{|h|} \cdot e_d| > c \rbrace$, $c \in (0,1)$, be a double cone and define a function $k(x, y) = (2-\alpha) |y-x|^{-d-\alpha} \chi_{\mathcal{C}}(y-x)$. Let $p(t; x, y)$ be the heat kernel associated with $\mathcal{E}$. As explained in \cite[Example 1.2]{CKW20}, the classical heat kernel estimate fails to hold. In fact, $p(t; x,y)$ has no bounds which are rotationally symmetric in space. Indeed, by \cite[Equation (2.4)]{BKK15}, we know that
\begin{equation*}
\int_{\Rd} f(y) k(x, y) \dy
= \lim_{t \searrow 0} \frac{1}{t} \int_{\Rd} f(y) p(t;x, y) \dy
\end{equation*}
for all $f \in C_c(\Rd)$ and $x \in \Rd \setminus \mathrm{supp} \, f$. Thus, we obtain
\begin{equation*}
k(x, y) = k(y-x) = \lim_{t \searrow 0} \frac{p(t; x, y)}{t}.
\end{equation*}
This implies that
\begin{equation*}
\lim_{t \searrow 0} \frac{p(t; 0, e_1)}{p(t; 0, e_d)} = 0,
\end{equation*}
which shows that $p$ cannot have rotationally symmetric bounds. However, a Green function $G$ of $L$ on $\Omega$ has rotationally symmetric bounds \eqref{eq:upper-bound} and \eqref{eq:lower-bound} since $k$ satisfies \eqref{assum:E-lower}, \eqref{assum:U2}, and \eqref{assum:UJS}. Indeed, the conditions \eqref{assum:U2} and \eqref{assum:UJS} are satisfied obviously. See \cite[Theorem 1.11]{DyKa20} for the condition \eqref{assum:E-lower}.

\item
Let us provide another example of $k$ whose Green function satisfies isotropic bounds but the heat kernel does not. Let $a > 1$, then there exists $c > 0$ such that every annulus $B_{a^{-n+1}} \setminus B_{a^{-n}}$, $n=0, 1, \dots$, contains a ball $B(x_n, ca^{-n})$. Let $\tilde{k}(z)$ be a function such that
\begin{equation*}
\tilde{k}(z) = 
\begin{cases}
(2-\alpha) |z|^{-d-\alpha} &\text{if}~ z \in B(x_n, ca^{-n}) \cup B(-x_n, ca^{-n}), \\
0 &\text{otherwise},
\end{cases}
\end{equation*}
and let $k$ be a function satisfying $k(x, y) = k(x-y) \asymp \tilde{k}(x-y)$. Then, the heat kernel does not have isotropic bounds in the same spirit as in the previous example. However,  it is known \cite[Corollary 12.5]{Sch20} that $k$ satisfies \eqref{assum:E-lower}, \eqref{assum:U2}, and \eqref{assum:UJS}.

\item
Consider a non-degenerate translation-invariant kernel $k$ which is symmetric in the sense that $k(z) = k(-z)$ and homogeneous of degree $-d-\alpha$. Note that the non-degeneracy gives \eqref{assum:E-lower}. If $k$ satisfies \eqref{assum:U2}, then the relative Kato condition holds if and only if the Harnack inequality holds (see \cite{BoSz05} for the relative Kato condition). Moreover, in this setting the condition \eqref{assum:UJS} is stronger than the relative Kato condition (see \cite[Theorem B.6]{Sch20}).

\item
Let us consider a function $k$ satisfying
\begin{equation*}
\Lambda^{-1}(2-\alpha) \left( \chi_{V^\Gamma[x]}(y) + \chi_{V^\Gamma[y]}(x) \right) |y-x|^{-d-\alpha} \leq k(x, y) \leq \Lambda (2-\alpha) |y-x|^{-d-\alpha}, 
\end{equation*}
where $V^\Gamma[x] = x + \Gamma(x)$ is a double cone, with apex at $x$ and a fixed opening, that might rotate arbitrarily from point to point (see \cite{BKS19} for the precise definition). Note that $k$ is non-translation-invariant in general and \Cref{ex:examples} (i) is a special case of this example which is translation-invariant.

It is proved \cite{BKS19} that \eqref{assum:E-lower} holds for all admissible configurations. Moreover, there are some configurations that fulfill the condition \eqref{assum:UJS}, see \cite[Lemma 12.2]{Sch20}. However, there also exists an example that does not satisfy the condition \eqref{assum:UJS}. See \cite[Section 12.1]{Sch20}.

\item
Another example of a non-translation-invariant measure is given by
\begin{equation*}
k(x,y) \dy = (2-\alpha) \frac{a(x, y)}{|y-x|^{d+\alpha}} \dy
\end{equation*}
with a measurable function $a$, which is uniformly bounded above and below away from 0. It is proved \cite{BaLe02} that the Harnack inequality holds for nonnegative harmonic functions, from which \eqref{assum:H} follows.

\item
It is proved \cite{ChSi20} that the condition \eqref{assum:E-lower} is implied by the following mild condition: there are constants $\delta \in (0,1)$ and $\lambda > 0$ such that for every ball $B \subset \Rd$ and every point $x \in B$,
\begin{equation*}
|\lbrace y \in B : k(x,y) \geq \lambda (2-\alpha) |y-x|^{-d-\alpha} \rbrace | \geq \delta |B|.
\end{equation*}
\end{enumerate}
\end{example}

\begin{remark}
Let $b \in (0,1)$ and
\begin{align*}
\Gamma = \lbrace (x_1, x_2) \in \R^2 : |x_2| \geq |x_1|^b
~\text{or}~ |x_1| \geq |x_2|^b \rbrace.
\end{align*}
We consider a function $k(z) = (2-\alpha) {\bf 1}_{\Gamma \cap B_1} |z|^{-2-\gamma}$,
where $\gamma = \alpha - 1 + 1/b$. 
It is proved in \cite[Example 6.15]{DyKa20} that $k$ satisfies \eqref{assum:E-lower} and \eqref{assum:U1} with $\alpha$ unchanged. Note that \eqref{assum:U2} is satisfied with $\gamma$, but not with $\alpha$. Therefore, for this example we obtain the existence result only.
\end{remark}

Let us review some related results. With the help of techniques from classical potential theory \cite{LSW63,GrWi82,Zha86} establish sharp Green function estimates for second order operators on bounded $C^{1,1}$ domains. \cite{Kul97,ChSo98} extend these results to operators of fractional order by studying symmetric $\alpha$-stable processes ($0 < \alpha < 2$) on bounded $C^{1,1}$ domains. Their proofs rely on  explicit formulas for the Green functions on balls. \cite{CKS10a} uses fine estimates of the two-sided Dirichlet heat kernel in order to prove a similar result. Dirichlet heat kernel estimates and Green function estimates have been extended to a wide class of operators of fractional order and related Markov processes. See \cite{CKSV12, CKS14, BGR14, KiMi14, GKK20} for a selection of recent results in this direction. As mentioned in the beginning of the introduction, these estimates are sharp for fixed $\alpha$ but the constants in the estimates may degenerate as $\alpha \to 2-$. 
In the case of symmetric $\alpha$-stable processes, i.e., for translation invariant operators with appropriate symmetric measures, \cite{Che99,ChSo04} do establish Green function estimates that are robust as $\alpha \to 2-$. Both works use explicit formulas for the Green function on balls, thus the restricted scope. However, the results of \cite{Che99,ChSo04} include global estimates up to the boundary of the considered domain. It is an interesting task to prove such global bounds in the framework of our work.

Using a different approach based on variational techniques as in \cite{LSW63,GrWi82}, the Green function for nonlocal operators of fractional order with rough coefficients is studied in \cite{KaSt02,KMS15,CaSi18}. The authors of \cite{CaSi18} obtain the near-diagonal estimates for Green functions and the boundary Harnack principle in the framework of nonlinear nonlocal operators modeled after the fractional $p$-Laplacian. While these results are obtained for kernels comparable to rotationally symmetric ones, our results allow for kernels that may not have rotationally symmetric bounds. 

\begin{remark}
As a byproduct of this work, we correct a serious mistake in \cite{KaSt02}. The proof of Proposition 3.3 is incorrect. One cannot apply Theorem 2.1 because the function $G_\rho$ does not vanish on the complement of $\Omega \setminus B_\rho$. 
\end{remark}

The paper is organized as follows. 
In \Cref{sec:prelims} we introduce function spaces and recall some embedding theorems. Moreover, we provide algebraic inequalities for later use. In \Cref{sec:existence} we prove our first main theorem, \Cref{thm:existence}, which establishes the existence and uniqueness results with the uniform estimates on $L_{\mathrm{weak}}^{d/(d-\alpha)}$. This estimate plays a crucial role in \Cref{sec:upper-bounds}, where the local boundedness result and the proof of \Cref{thm:upper-bound} are provided. \Cref{sec:lower-bounds} is devoted to the proof of the pointwise lower bounds of Green functions, i.e., \Cref{thm:lower-bound}. Finally, the symmetry of the Green function is proved in \Cref{sec:symmetry}.

{\bf Acknowledgement}: The authors thank the anonymous referee and Marvin Weidner for helpful comments that led to an improvement of the paper.

\section{Preliminaries} \label{sec:prelims}

In this section, we recall function spaces and embedding theorems that will be used in the sequel. We will also provide some algebraic inequalities in the end of this section for later use.

\begin{definition} [Function spaces]
Let $\Omega \subset \Rd$ be an open set in $\Rd$ and $k$ be a measurable function satisfying \eqref{eq:Levy-cond}. We define the following linear spaces:
\begin{enumerate} [(i)]
\item
$H(\Rd; k) = \lbrace u \in L^2(\Rd) : \mathcal{E}(u, u) < +\infty \rbrace$.
\item
$H_\Omega(\Rd; k) 
= \lbrace u \in H(\Rd; k): u = 0 ~\mathrm{a.e.} ~\text{on}~ \Rd \setminus \Omega \rbrace$.
\end{enumerate}
If $k=k^{(\alpha)}$ as in \Cref{ex:alpha-stable}, then we write $H_\Omega(\Rd; k^{(\alpha)}) = H^{\alpha/2}_{\Omega}(\Rd)$. In a similar manner, we introduce the space $W^{\alpha/2, p}_\Omega(\Rd)$ for $p \geq 1$:
\begin{enumerate} [(i)]
\item[(iii)]
$W^{\alpha/2, p}_\Omega(\Rd) = \lbrace u \in W^{\alpha/2, p}(\Rd): u = 0 ~\mathrm{a.e.} ~\text{on}~ \Rd \setminus \Omega \rbrace$.
\end{enumerate}
We will also make use of the weak $L^p$ spaces:
\begin{enumerate}[(i)]
\item[(iv)]
$L^p_\mathrm{weak}(\Omega) 
= \lbrace u : [u]_{L^p_\mathrm{weak}(\Omega)} < +\infty \rbrace$, where
\begin{align*}
[u]_{L^p_\mathrm{weak}(\Omega)} 
= \sup_{t > 0} t |\lbrace x \in \Omega : |u(x)| > t \rbrace|^{1/p}.
\end{align*}
\end{enumerate}
\end{definition}

It is known \cite{FKV15} that the spaces $H(\Rd; k)$ and $H_\Omega(\Rd; k)$, endowed with the norm $\|u\|_{H(\Rd; k)} = (\|u\|_{L^2(\Rd)}^2 + \mathcal{E}(u,u))^{1/2}$, are Hilbert spaces even when $k$ is not symmetric. Similarly, $W^{\alpha/2, p}_{\Omega}(\Rd)$ is a Banach space with the fractional Sobolev norm
\begin{equation*}
\begin{split}
\|u\|_{W^{\alpha/2,p}(\Rd)} 
&= (\|u\|_{L^p(\Rd)}^p + [u]_{W^{\alpha/2, p}(\Rd)}^p)^{1/p} \\
&= \left( \int_{\Rd} |u(x)|^p \dx + (2-\alpha)\int_{\Rd} \int_{\Rd} \frac{|u(y)-u(x)|^p}{|x-y|^{d+\frac{\alpha}{2}p}} \dy \dx \right)^{1/p}.
\end{split}
\end{equation*}

Note that, for simplicity, we choose $(2-\alpha)$ as a constant in the seminorm because, in our results, we only trace the behavior of constants for $\alpha \to 2-$. Let us collect some embedding theorems for the spaces introduced above. 

\begin{proposition} \label{prop:cont-embedding}
Let $\Omega \subset \Rd$ be open and bounded. If $0<\beta<\alpha<2$ and $p>q\geq1$, then $W^{\alpha/2,p}_{\Omega}(\Rd) \subset W^{\beta/2,q}_{\Omega}(\Rd)$.
\end{proposition}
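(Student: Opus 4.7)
The plan is to verify the two terms of the $W^{\beta/2,q}$ norm separately. For the Lebesgue part, since $u = 0$ outside the bounded set $\Omega$, H\"older's inequality gives $\|u\|_{L^q(\Rd)} \leq |\Omega|^{1/q-1/p} \|u\|_{L^p(\Rd)}$, so that part is immediate.

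The main work is to control the Gagliardo seminorm, which I would handle by splitting $\Rd \times \Rd$ into a near region $\{|x-y| \leq 1\}$ and a far region $\{|x-y| > 1\}$. On the far region, since $u$ is supported in the bounded set $\Omega$, the elementary inequality $|u(y)-u(x)|^q \leq 2^{q-1}(|u(x)|^q + |u(y)|^q)$ together with the integrability of $|h|^{-d-\beta q/2}$ for $|h|>1$ reduces the double integral to a multiple of $\|u\|_{L^q(\Rd)}^q$, which is already under control by the first step. On the near region, I would apply H\"older's inequality with dual exponents $p/q$ and $p/(p-q)$ to the factorization
\begin{equation*}
\frac{|u(y)-u(x)|^q}{|x-y|^{d+\beta q/2}} = \frac{|u(y)-u(x)|^q}{|x-y|^{(d+\alpha p/2) q/p}} \cdot |x-y|^{(d+\alpha p/2)q/p - d - \beta q/2}.
\end{equation*}
The first factor raised to $p/q$ reproduces the integrand of $[u]_{W^{\alpha/2,p}(\Rd)}^p$, and the second factor raised to $p/(p-q)$ simplifies to $|x-y|^{-d + \gamma}$ with $\gamma = (\alpha-\beta)pq/(2(p-q)) > 0$. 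Since the overall integrand can only be nonzero when at least one of $x,y$ lies in $\Omega$, the effective domain of integration is contained in a bounded neighborhood of $\Omega \times \Omega$, and the second integral is finite because $\gamma > 0$ makes $|h|^{-d+\gamma}$ locally integrable near the origin.

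The only real technicality is the exponent bookkeeping in the H\"older step together with the verification that $\gamma > 0$, which is automatic from the assumptions $\alpha > \beta$ and $p > q \geq 1$. No deeper obstacle is expected: conceptually, the statement is the classical monotonicity of Gagliardo--Slobodeckij spaces in both the smoothness and summability indices once one restricts to functions supported in a bounded set, and the bounded-support hypothesis is exactly what tames the far-field contribution and the residual volume factor after H\"older.
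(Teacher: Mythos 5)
Your proof is correct, and the exponent bookkeeping checks out: the second H\"older factor indeed becomes $|x-y|^{-d+\gamma}$ with $\gamma = (\alpha-\beta)pq/(2(p-q))>0$, and restricting to the set where at least one of $x,y$ lies in $\Omega$ (forced by $\operatorname{supp} u \subset \overline{\Omega}$) together with $|x-y|\leq 1$ confines the integration to a bounded set, so that factor is finite. Your route differs from the paper's mainly in the decomposition and in being self-contained: the paper splits the seminorm as $\int_\Omega\int_{B_R(x_0)} + \int_\Omega\int_{\Rd\setminus B_R(x_0)}$ for a large ball $B_R(x_0)\supset\Omega$, handles the local piece by citing the embedding lemma of Cozzi (Lemma 4.6 in \cite{Coz17}), and treats the exterior piece exactly as you treat your far region, using $u=0$ outside $\Omega$ and the comparability $|x-y|\geq \tfrac12|y-x_0|$. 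Your near/far split in $|x-y|$ plus the explicit H\"older interpolation reproduces by hand what the cited Cozzi lemma provides; what you gain is a self-contained argument with transparent constants, what the paper gains is brevity by offloading the interpolation to a reference. Both yield constants depending on $d,\alpha,\beta,p,q$ and the geometry of $\Omega$, which is all the proposition asserts (the normalization factors $2-\alpha$, $2-\beta$ in the paper's seminorms only affect these constants, since $\alpha,\beta$ are fixed here).
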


\begin{proof}
Let $u \in W^{\alpha/2,p}_{\Omega}(\Rd)$. By H\"older's inequality, we have
\begin{equation} \label{eq:embedding-Lq}
\|u\|_{L^{q}(\Rd)} = \|u\|_{L^{q}(\Omega)} \leq |\Omega|^{\frac{1}{q}-\frac{1}{p}} \|u\|_{L^{p}(\Omega)} = |\Omega|^{\frac{1}{q}-\frac{1}{p}} \|u\|_{L^{p}(\Rd)} < \infty.
\end{equation}
To estimate $[u]_{W^{\beta/2,q}(\Rd)}$, let us fix a point $x_{0} \in \Omega$ and consider a large Ball $B_{R}(x_{0})$ such that $\Omega \subset B_{R}(x_{0})$ and $\mathrm{dist}(\Omega, \partial B_{R}(x_{0})) \geq 2\mathrm{diam}(\Omega)$. Then, by \cite[Lemma 4.6]{Coz17} we have
\begin{equation} \label{eq:embedding-I1}
I_{1} := \int_{\Omega} \int_{B_{R}(x_{0})} \frac{|u(x)-u(y)|^{q}}{|x-y|^{d+\frac{\beta}{2}q}} \dy \dx \leq C \left( \int_{\Omega} \int_{B_{R}(x_{0})} \frac{|u(x)-u(y)|^{p}}{|x-y|^{d+\frac{\alpha}{2}p}} \dy \dx \right)^{q/p} \leq C [u]_{W^{\alpha/2, p}(\Rd)}^{q}
\end{equation}
for some $C = C(d, \alpha, \beta, p, q, R, |\Omega|)>0$. Recalling $u=0$ outside $B_{R}(x_{0})$, we have
\begin{equation} \label{eq:embedding-I2}
I_{2} := \int_{\Omega} \int_{\Rd \setminus B_{R}(x_{0})} \frac{|u(x)-u(y)|^{q}}{|x-y|^{d+\frac{\beta}{2}q}} \dy \dx = \int_{\Omega} |u(x)|^{q} \int_{\Rd \setminus B_{R}(x_{0})} \frac{\mathrm{d}y}{|x-y|^{d+\frac{\beta}{2}q}} \dx.
\end{equation}
If $x \in \Omega$ and $y \in \Rd \setminus B_{R}(x_{0})$, then $|x-y| \geq |y-x_{0}| - |x-x_{0}| \geq \frac{1}{2} |y-x_{0}|$. Thus, it follows from \eqref{eq:embedding-Lq} and \eqref{eq:embedding-I2} that
\begin{equation} \label{eq:embedding-I22}
I_{2} \leq C \int_{\Omega} |u(x)|^{q} \int_{\Rd \setminus B_{R}(x_{0})} \frac{\mathrm{d}y}{|y-x_{0}|^{d+\frac{\beta}{2}q}} \dx \leq C \|u\|_{L^{q}(\Omega)}^{q} \leq C \|u\|_{L^{p}(\Rd)}^{q},
\end{equation}
where $C$ is a constant depending on $d$, $\beta$, $p$, $q$, $R$, and $|\Omega|$. Combining \eqref{eq:embedding-I1} and \eqref{eq:embedding-I22} yields
\begin{equation} \label{eq:embedding-W}
[u]_{W^{\beta/2,q}(\Rd)}^{q} \leq 2 \int_{\Omega} \int_{\Rd} \frac{|u(x)-u(y)|^{q}}{|x-y|^{d+\frac{\beta}{2}q}} \dy \dx \leq 2(I_{1} + I_{2}) \leq C \|u\|_{W^{\alpha/2, p}(\Rd)}^{q} < \infty.
\end{equation}
Therefore, the desired result follows from \eqref{eq:embedding-Lq} and \eqref{eq:embedding-W}.
\end{proof}

We recall the fractional Sobolev inequalities, see \cite{BBM02, MaSh02, CoTa04}.

\begin{theorem} [Sobolev inequality] \label{thm:Sobolev}
Let $0<\alpha_0 \leq \alpha<2$ and assume $d > \alpha$.
\begin{enumerate} [(i)]
\item
There exists a constant $C$,
depending only on $d$ and $\alpha_0$, such that for any measurable and compactly 
supported function $u : \Rd \rightarrow \R$ we have
\begin{align*}
\|u\|_{L^{2^\ast} (\Rd)}^2 \leq C(2-\alpha) \int_{\Rd} \int_{\Rd} 
\frac{|u(y) - u(x)|^2}{|y-x|^{d+\alpha}} \dy \dx,
\end{align*}
where $2^\ast = 2d / (d-\alpha)$.

\item
There exists a constant $C$,
depending only on $d$ and $\alpha_0$, such that for any $u \in H^{\alpha/2}(B_r)$
\begin{align*}
\|u\|_{L^{2^\ast} (B_r)}^2 \leq C(2-\alpha) \int_{B_r} \int_{B_r} \frac{|u(y) - u(x)|^2}{|y-x|^{d+\alpha}} \dy \dx + Cr^{-\alpha} \| u\|_{L^2(B_r)}^2.
\end{align*}
\end{enumerate}
\end{theorem}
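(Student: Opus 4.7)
The plan is to reduce both parts to the classical Hardy--Littlewood--Sobolev (HLS) inequality on $\Rd$, carefully tracking the $(2-\alpha)$-factor that must appear on the right-hand sides.

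For part (i), I would pass to the Fourier side via Plancherel. The standard identity
\[
[u]_{H^{\alpha/2}(\Rd)}^{2} = \frac{2}{c(d,\alpha)} \int_{\Rd} |\xi|^{\alpha} |\hat u(\xi)|^{2} \,\d\xi = \frac{2}{c(d,\alpha)} \|(-\Delta)^{\alpha/4} u\|_{L^{2}(\Rd)}^{2}
\]
holds with $c(d,\alpha) = \alpha 2^{\alpha-1} \pi^{-d/2} \Gamma(\tfrac{d+\alpha}{2})/\Gamma(1-\tfrac{\alpha}{2})$, the normalization constant of $(-\Delta)^{\alpha/2}$. The functional equation $\Gamma(1-\tfrac{\alpha}{2}) = \tfrac{2}{2-\alpha}\Gamma(2-\tfrac{\alpha}{2})$ and the continuity of $\Gamma$ imply that $c(d,\alpha)/(2-\alpha)$ is bounded away from $0$ and $\infty$ on $[\alpha_0, 2)$. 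Next I would invoke the HLS/fractional Sobolev inequality
\[
\|u\|_{L^{2^{\ast}}(\Rd)} \leq S(d,\alpha) \|(-\Delta)^{\alpha/4} u\|_{L^{2}(\Rd)},
\]
with $S(d,\alpha)$ continuous in $\alpha$; since $2^{\ast} = 2d/(d-\alpha)$ lies in a compact subset of $(2, 2d/(d-2)]$ for $\alpha \in [\alpha_0, 2)$, $S$ is uniformly bounded, and combining the two estimates gives (i).

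For part (ii), I would construct a bounded linear extension operator $E\colon H^{\alpha/2}(B_{r}) \to H^{\alpha/2}(\Rd)$ with $Eu=u$ on $B_{r}$, $\mathrm{supp}(Eu) \subset B_{2r}$, and
\[
\|Eu\|_{L^{2}(\Rd)}^{2} \leq C \|u\|_{L^{2}(B_{r})}^{2}, \qquad [Eu]_{H^{\alpha/2}(\Rd)}^{2} \leq C [u]_{H^{\alpha/2}(B_{r})}^{2} + C r^{-\alpha} \|u\|_{L^{2}(B_{r})}^{2},
\]
where $C$ depends only on $d$ and $\alpha_0$. After rescaling to $r=1$, my choice would be reflection across $\partial B_{1}$ via the inversion $\Phi(x)=x/|x|^{2}$ (a smooth diffeomorphism from $B_{1}\setminus B_{1/2}$ onto $B_{2} \setminus B_{1}$ with Jacobian uniformly bounded above and below), followed by multiplication by a smooth cutoff supported in $B_{2}$. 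Splitting the Gagliardo double integral for $Eu$ into pieces over $B_{1} \times B_{1}$, $B_{1} \times (B_{2}\setminus B_{1})$, and $(B_{2}\setminus B_{1}) \times (B_{2}\setminus B_{1})$, and applying the change of variables $\Phi$, each piece is bounded by $[u]_{H^{\alpha/2}(B_{1})}^{2}$ plus a boundary term of order $\|u\|_{L^{2}(B_{1})}^{2}$. Applying part (i) to $Eu$ and using $2-\alpha \leq 2$ on the lower-order term then yields (ii).

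The main obstacle will be verifying that the extension constant is independent of $\alpha$ as $\alpha \to 2-$. The key observation is that after the change of variables $\Phi$ the transplanted kernel $|\Phi(y)-\Phi(x)|^{-d-\alpha}$ is comparable to $|y-x|^{-d-\alpha}$ with a ratio bounded by a constant depending only on $d$, since $\Phi$ is bi-Lipschitz on the relevant annulus with constants depending only on $d$. The boundary correction coming from the cutoff is controlled by the elementary estimate $\int_{B_{2}\setminus B_{1}} |y-x|^{-d-\alpha}\,\d y \leq C/\alpha_{0}$ for $x \in B_{1/2}$, which is uniform in $\alpha \in [\alpha_{0}, 2)$. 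In this way all constants remain uniform in $\alpha$, and the theorem follows.
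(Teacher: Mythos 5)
The paper does not actually prove this theorem --- it recalls it from \cite{BBM02,MaSh02,CoTa04} --- so there is no internal proof to compare against; your part (i) is essentially the standard Cotsiolis--Tavoularis route and the bookkeeping is correct: $c(d,\alpha)/(2-\alpha)$ is bounded above and below on $[\alpha_0,2)$ by $\Gamma(1-\tfrac{\alpha}{2})=\tfrac{2}{2-\alpha}\Gamma(2-\tfrac{\alpha}{2})$, and the HLS constant is uniformly bounded because $\alpha/2$ stays in a compact subset of $(0,d/2)$ under the standing assumption $d\geq 3$. One minor point: Plancherel requires $u\in L^2(\Rd)$, so for a merely measurable, compactly supported $u$ you should first truncate, $u_N=(u\wedge N)\vee(-N)$, which is bounded with compact support and does not increase the right-hand side, and then let $N\to\infty$ by monotone convergence.

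The genuine gap is the extension estimate in part (ii). As stated, with $C=C(d,\alpha_0)$ and the unweighted Gagliardo double integrals, the bound
$\int_{\Rd}\int_{\Rd}\frac{|Eu(y)-Eu(x)|^2}{|y-x|^{d+\alpha}}\dy\dx \leq C\int_{B_r}\int_{B_r}\frac{|u(y)-u(x)|^2}{|y-x|^{d+\alpha}}\dy\dx + Cr^{-\alpha}\|u\|_{L^2(B_r)}^2$
is false, and no extension operator with $\mathrm{supp}(Eu)\subset B_{2r}$ can satisfy it: take $u\equiv 1$ on $B_r$. Then the right-hand side is $Cr^{d-\alpha}$, uniformly in $\alpha$, whereas $Eu$ is compactly supported and equal to $1$ on $B_r$, so part (i) itself forces $\int_{\Rd}\int_{\Rd}\frac{|Eu(y)-Eu(x)|^2}{|y-x|^{d+\alpha}}\dy\dx \geq c\,(2-\alpha)^{-1}\|Eu\|_{L^{2^\ast}(\Rd)}^2 \geq c\,(2-\alpha)^{-1}r^{d-\alpha}$, which blows up as $\alpha\to 2-$. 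The loss comes precisely from the cutoff term you dismiss: near the support of $\nabla\eta$ one has $|\eta(y)-\eta(x)|\leq Cr^{-1}|y-x|$ and $\int_{|y-x|\leq r}|y-x|^{2-d-\alpha}\dy \asymp r^{2-\alpha}/(2-\alpha)$; your estimate $\int_{B_2\setminus B_1}|y-x|^{-d-\alpha}\dy\leq C/\alpha_0$ only controls points $x$ far from the cutoff region. The repair is straightforward and your strategy survives: prove the extension bound with $C(2-\alpha)^{-1}r^{-\alpha}\|u\|_{L^2(B_r)}^2$ as the zeroth-order term (the inversion/reflection part of your construction is indeed uniform in $\alpha$, since $\Lambda^{d+\alpha}\leq\Lambda^{d+2}$), and then apply part (i) to $Eu$; the prefactor $(2-\alpha)$ cancels the $(2-\alpha)^{-1}$ and yields exactly (ii). Correspondingly, the step ``using $2-\alpha\leq 2$ on the lower-order term'' should be dropped.
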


We next recall the embedding theorems for the weak $L^p$ spaces. See, for example, \cite[Theorem 2.18.8]{KJF77} for the proof.

\begin{theorem}
Let $\Omega$ be a bounded open set in $\Rd$ and let $p \geq 1$. Then, $L^p(\Omega) \subset L^p_\mathrm{weak}(\Omega)$ and
\begin{align*}
[u]_{L^p_\mathrm{weak}(\Omega)} \leq \|u\|_{L^p(\Omega)}.
\end{align*}
If $1 \leq q <p$, then $L^p_\mathrm{weak}(\Omega) \subset L^q (\Omega)$ and
\begin{align} \label{eq:weak_Lq_embedding}
\|u\|_{L^q(\Omega)} \leq \left( p/(p-q) \right)^{1/q} |\Omega|^{1/q-1/p} [u]_{L^p_\mathrm{weak}(\Omega)}.
\end{align}
\end{theorem}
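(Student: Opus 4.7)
The statement is the standard weak-$L^p$ embedding pair, which I would prove by Chebyshev's inequality together with the layer-cake (Cavalieri) identity, optimizing over a cutoff level.

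For the first embedding $L^p(\Omega) \subset L^p_\mathrm{weak}(\Omega)$, I would apply Chebyshev's inequality directly: for every $t>0$,
\[
t^p |\{x \in \Omega : |u(x)| > t\}| \leq \int_{\{|u|>t\}} |u|^p \dx \leq \|u\|_{L^p(\Omega)}^p,
\]
and taking the $p$-th root followed by $\sup_{t>0}$ gives $[u]_{L^p_\mathrm{weak}(\Omega)} \leq \|u\|_{L^p(\Omega)}$.

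For the second embedding $L^p_\mathrm{weak}(\Omega) \subset L^q(\Omega)$ with $1\leq q < p$, I would use the layer-cake representation
\[
\|u\|_{L^q(\Omega)}^q = q \int_0^\infty t^{q-1} |\{x \in \Omega : |u(x)|>t\}| \dt,
\]
and split the integral at a level $T>0$ to be chosen. On $(0,T)$ I would use the trivial bound $|\{|u|>t\}| \leq |\Omega|$, which contributes $|\Omega| T^q$. On $(T,\infty)$ I would use the weak-type bound $|\{|u|>t\}| \leq [u]_{L^p_\mathrm{weak}(\Omega)}^p t^{-p}$, and since $q-p<0$ the resulting integral converges and contributes $\frac{q}{p-q} [u]_{L^p_\mathrm{weak}(\Omega)}^p T^{q-p}$. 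Summing yields
\[
\|u\|_{L^q(\Omega)}^q \leq |\Omega|\, T^q + \frac{q}{p-q} [u]_{L^p_\mathrm{weak}(\Omega)}^p \, T^{q-p}.
\]

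The only nontrivial step is choosing the optimal $T$. Setting the derivative in $T$ to zero gives $T = [u]_{L^p_\mathrm{weak}(\Omega)} / |\Omega|^{1/p}$. Substituting this value, both terms become $|\Omega|^{1-q/p} [u]_{L^p_\mathrm{weak}(\Omega)}^q$ (up to the factor $\frac{q}{p-q}$ in the second), so that
\[
\|u\|_{L^q(\Omega)}^q \leq \Bigl(1 + \tfrac{q}{p-q}\Bigr) |\Omega|^{1-q/p} [u]_{L^p_\mathrm{weak}(\Omega)}^q = \tfrac{p}{p-q} |\Omega|^{1-q/p} [u]_{L^p_\mathrm{weak}(\Omega)}^q,
\]
and taking $q$-th roots gives the stated inequality with constant $(p/(p-q))^{1/q}$. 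There is no conceptual obstacle; the main bookkeeping issue is the optimization in $T$, but since the two terms scale as $T^q$ and $T^{q-p}$ a standard AM-type argument pinpoints the exponents $|\Omega|^{1-q/p}$ and $[u]_{L^p_\mathrm{weak}(\Omega)}^q$ without further effort.
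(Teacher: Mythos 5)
Your proof is correct: Chebyshev's inequality gives the first embedding, and the layer-cake formula split at the level $T = [u]_{L^p_\mathrm{weak}(\Omega)}/|\Omega|^{1/p}$ yields exactly the constant $(p/(p-q))^{1/q}|\Omega|^{1/q-1/p}$ stated in \eqref{eq:weak_Lq_embedding}; the optimization step and the exponent bookkeeping all check out. The paper does not prove this statement at all --- it simply cites \cite[Theorem 2.18.8]{KJF77} --- and your argument is the standard textbook proof of that result, so there is nothing to reconcile; if anything, your write-up is a self-contained substitute for the reference.
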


We end this section with some algebraic inequalities that will be useful throughout the paper.

\begin{lemma} [Lemma 3.2 in \cite{KaSt02}] \label{lem:inequality1}
For $a, b \geq 0$ and $s \in (0,1)$
\begin{align*}
\frac{8}{2^{1/s}} (1-s)^{-2} ((1+b)^{(1-s)/2} - (1+a)^{(1-s)/2})^2 \leq (b-a) (b(1+b^s)^{-1/s} - a(1+a^s)^{-1/s}).
\end{align*}
\end{lemma}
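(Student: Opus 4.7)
The plan is to reduce the desired inequality to a pointwise differential inequality via a weighted Cauchy--Schwarz step, and then verify the resulting one-variable estimate by elementary calculus. Since both sides are symmetric in $(a,b)$ I may assume $0\le a\le b$ throughout. Writing $\phi(t):=(1+t)^{(1-s)/2}$ and $F(t):=t(1+t^s)^{-1/s}$, the claim takes the compact form
\begin{align*}
c_s\,(\phi(b)-\phi(a))^2 \le (b-a)(F(b)-F(a)), \qquad c_s = \tfrac{8}{2^{1/s}(1-s)^2}.
\end{align*}
A direct differentiation (using $\tfrac{d}{dt}(1+t^s)^{-1/s} = -t^{s-1}(1+t^s)^{-(1+s)/s}$) gives $\phi'(t)=\tfrac{1-s}{2}(1+t)^{-(1+s)/2}$ and $F'(t)=(1+t^s)^{-(1+s)/s}$, both strictly positive, so $\phi$ and $F$ are smooth and increasing on $[0,\infty)$.

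Next I would write the two differences as integrals on $[a,b]$ and apply Cauchy--Schwarz with weight $F'$:
\begin{align*}
(\phi(b)-\phi(a))^2 = \left(\int_a^b \phi'(t)\,\dt\right)^2 \le \left(\int_a^b \frac{\phi'(t)^2}{F'(t)}\,\dt\right)(F(b)-F(a)).
\end{align*}
Thus the whole problem reduces to a pointwise bound $\phi'(t)^2/F'(t)\le 1/c_s$ uniform in $t\ge 0$, after which $\int_a^b \phi'^2/F'\,\dt \le (b-a)/c_s$ delivers the inequality. Substituting the explicit derivatives, the ratio collapses to
\begin{align*}
\frac{\phi'(t)^2}{F'(t)} = \frac{(1-s)^2}{4}\left(\frac{(1+t^s)^{1/s}}{1+t}\right)^{1+s},
\end{align*}
so the remaining task is to compute $\sup_{t\ge 0} g(t)$ for $g(t):=(1+t^s)^{1/s}/(1+t)$.

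A logarithmic-derivative computation gives $(\log g)'(t) = t^{s-1}/(1+t^s) - 1/(1+t)$, whose unique positive root is $t=1$; together with the boundary values $g(0)=g(\infty)=1$ and $g(1)=2^{1/s-1}>1$, this identifies $t=1$ as the global maximum of $g$. I expect the main obstacle to lie in the final bookkeeping of the constant: the raw bound produced by the Cauchy--Schwarz step involves the exponent $(1-s^2)/s$ rather than $1/s$, and one must rearrange powers of $2$ carefully (using $2^{(1-s^2)/s}=2^{1/s-s}$ and absorbing the residual factors $(1-s)^2/4$ and $2^s$ into the prefactor $8$) to arrive at the stated form $\tfrac{8}{2^{1/s}(1-s)^{2}}$. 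Once this arithmetic is carried out the proof is complete, the remaining steps being routine calculus.
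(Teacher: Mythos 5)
Your reduction is the right one and the calculus in it is correct: $\phi'(t)=\tfrac{1-s}{2}(1+t)^{-(1+s)/2}$, $F'(t)=(1+t^s)^{-(1+s)/s}$, the weighted Cauchy--Schwarz step, and the identification of $t=1$ as the maximizer of $g(t)=(1+t^s)^{1/s}/(1+t)$ with $g(1)=2^{1/s-1}$. The gap sits precisely in the final bookkeeping that you deferred: it does not come out to the stated constant. At the maximizer,
\begin{align*}
\sup_{t\ge 0}\frac{\phi'(t)^2}{F'(t)}=\frac{(1-s)^2}{4}\,2^{(1-s^2)/s},
\end{align*}
so your argument yields the inequality with constant $\frac{4}{(1-s)^2}\,2^{-(1-s^2)/s}=\frac{2^{2+s}}{2^{1/s}}(1-s)^{-2}$, while the pointwise bound $\phi'(t)^2/F'(t)\le 1/c_s$ with the stated $c_s=\frac{8}{2^{1/s}}(1-s)^{-2}$ is, at $t=1$, equivalent to $2^{1-s}\le 1$, which fails for every $s\in(0,1)$.

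No rearrangement of powers of $2$ can repair this, because the inequality with the printed constant is false: letting $b\to a=1$ forces any admissible constant to satisfy $c\le F'(1)/\phi'(1)^2=2^{s-1}\cdot\frac{8}{2^{1/s}}(1-s)^{-2}$, strictly below the stated one, and concretely for $s=1/2$, $a=0$, $b=1$ the left-hand side equals $8(2^{1/4}-1)^2\approx 0.286$ while the right-hand side equals $1/4$. (The paper itself gives no proof but quotes the lemma from \cite{KaSt02}; the constant has evidently been garbled in transcription.) On the positive side, your Cauchy--Schwarz bound is sharp -- the diagonal limit at $a=1$ matches it -- and in the only place the lemma is used, namely the test-function argument in the proof of \Cref{prop:uniform_estimates}, any positive $c(s)$ suffices. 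So your argument does establish everything the paper actually needs, provided you state the conclusion with the corrected constant $\frac{2^{2+s}}{2^{1/s}}(1-s)^{-2}$ rather than asserting that the arithmetic returns the printed prefactor.
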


\begin{lemma} \label{lem:inequality1-lower}
For $a, b \geq 0$ and $s \in (0, 1)$
\begin{equation*}
|b-a| \leq \frac{2}{1-s} \left| (1+b)^{\frac{1-s}{2}} - (1+a)^{\frac{1-s}{2}} \right| \max\left\lbrace (1+a)^{\frac{1+s}{2}}, (1+b)^{\frac{1+s}{2}} \right\rbrace.
\end{equation*}
\end{lemma}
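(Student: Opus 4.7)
The plan is to reduce the statement to the mean value theorem applied to the function $f(t)=(1+t)^{(1-s)/2}$. By symmetry of the claim in $a$ and $b$, I would assume without loss of generality that $b\geq a\geq 0$. Then $|b-a|=b-a$, and since $t\mapsto (1+t)^{(1-s)/2}$ is increasing, the absolute value on the right-hand side also equals the difference $(1+b)^{(1-s)/2}-(1+a)^{(1-s)/2}$, while the maximum of $(1+a)^{(1+s)/2}$ and $(1+b)^{(1+s)/2}$ equals $(1+b)^{(1+s)/2}$.

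Next, I would apply the mean value theorem to $f$ on $[a,b]$. Its derivative is $f'(t)=\tfrac{1-s}{2}(1+t)^{-(1+s)/2}$, so there exists $c\in[a,b]$ with
\begin{equation*}
(1+b)^{(1-s)/2}-(1+a)^{(1-s)/2}=\tfrac{1-s}{2}(1+c)^{-(1+s)/2}(b-a).
\end{equation*}
Rearranging yields
\begin{equation*}
b-a=\tfrac{2}{1-s}(1+c)^{(1+s)/2}\bigl((1+b)^{(1-s)/2}-(1+a)^{(1-s)/2}\bigr).
\end{equation*}

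Finally, since $c\in[a,b]$ and the map $t\mapsto(1+t)^{(1+s)/2}$ is increasing for $s\in(0,1)$, one has $(1+c)^{(1+s)/2}\leq (1+b)^{(1+s)/2}=\max\{(1+a)^{(1+s)/2},(1+b)^{(1+s)/2}\}$, which plugged into the previous display gives exactly the stated inequality. There is really no main obstacle here: the proof is a one-line application of the mean value theorem, with the reduction to the case $b\geq a$ being the only slightly non-trivial observation needed to match the absolute value and maximum that appear in the symmetric formulation.
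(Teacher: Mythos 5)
Your proof is correct and is essentially the paper's argument: the paper writes the difference quotient of $(1+t)^{(1-s)/2}$ as $\frac{1}{b-a}\int_a^b \frac{1-s}{2}(1+t)^{-(1+s)/2}\,\d t$ and bounds it below by $\frac{1-s}{2}(1+b)^{-(1+s)/2}$, which is the same estimate you obtain via the mean value theorem. The only difference is cosmetic (MVT versus integrating the derivative), so nothing further is needed.
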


\begin{proof}
Assume that $b > a$, then we have
\begin{equation*}
\frac{(1+b)^{\frac{1-s}{2}}-(1+a)^{\frac{1-s}{2}}}{b-a} = \frac{1}{b-a} \int_a^b \frac{1-s}{2} (1+t)^{-\frac{1+s}{2}} \dt \geq \frac{1-s}{2} (1+b)^{-\frac{1+s}{2}}.
\end{equation*}
The case $a > b$ can be proved in the same way.
\end{proof}

\begin{lemma} \label{lem:inequality2}
For $a, b > 0$ 
\begin{align*}
(b-a)(a^{-1} - b^{-1}) \geq (\log b - \log a)^2.
\end{align*}
\end{lemma}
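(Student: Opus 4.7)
The plan is to rewrite both sides in integral form and apply the Cauchy--Schwarz inequality. Without loss of generality assume $b > a$; the case $a > b$ follows by swapping the roles of $a$ and $b$ (both sides are symmetric under this exchange), and equality holds trivially when $a = b$.

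First I would note the two identities
\begin{align*}
\log b - \log a = \int_a^b \frac{1}{t} \, \d t, \qquad a^{-1} - b^{-1} = \int_a^b \frac{1}{t^2} \, \d t, \qquad b - a = \int_a^b 1 \, \d t.
\end{align*}
Writing $\frac{1}{t} = \frac{1}{t} \cdot 1$ and applying Cauchy--Schwarz on the interval $[a,b]$ then gives
\begin{align*}
(\log b - \log a)^2 = \left( \int_a^b \frac{1}{t} \cdot 1 \, \d t \right)^2 \leq \left( \int_a^b \frac{1}{t^2} \, \d t \right) \left( \int_a^b 1 \, \d t \right) = (a^{-1} - b^{-1})(b-a),
\end{align*}
which is exactly the claimed inequality.

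There is no real obstacle here; the only thing to be careful about is recording that the argument is symmetric in $a$ and $b$ so that the case $a > b$ is covered. As a sanity check one may also substitute $b = a e^t$, which reduces the inequality to $(2 \sinh(t/2))^2 \geq t^2$, a direct consequence of $\sinh x \geq x$ for $x \geq 0$; this alternative route could be used if one prefers to avoid integral notation, but the Cauchy--Schwarz argument is the shortest.
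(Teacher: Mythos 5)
Your proof is correct. It does, however, take a different route from the paper: the paper proves \Cref{lem:inequality2} by substituting $a = e^x$, $b = e^y$, which turns the left-hand side into $2(\cosh(y-x)-1)$ and reduces the claim to the Taylor-expansion fact $\cosh s \geq 1 + s^2/2$ — exactly the reduction you sketch in your ``sanity check'' via $b = a e^t$ and $\sinh x \geq x$. Your main argument instead writes $\log b - \log a$, $a^{-1}-b^{-1}$, and $b-a$ as integrals over $[a,b]$ and applies Cauchy--Schwarz, which is equally short and entirely valid; the symmetry remark disposing of the case $a>b$ and the trivial case $a=b$ is all the bookkeeping needed. The Cauchy--Schwarz route has the mild advantage of avoiding any series expansion and of generalizing readily (it is the standard proof that the logarithmic mean lies below the arithmetic mean, and adapts to other pairs of weights), while the paper's substitution is the more elementary pointwise computation; either one suffices for the way the lemma is used in the proof of \Cref{thm:existence}.
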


One can easily prove \Cref{lem:inequality2} by setting $a = e^x$, $b = e^y$, and then applying the Taylor expansion.

\begin{lemma} [Lemma 3.7 in \cite{Kas07}] \label{lem:inequality3}
For $a, b \geq 0, \eta_1, \eta_2 \geq 0$, and $q > 1$
\begin{align*}
(b-a)(\eta_2^2 b^{q-1} - \eta_1^2 a^{q-1})
\geq \frac{q-1}{32q^2} \left( \eta_2 b^\frac{q}{2} - \eta_1 a^\frac{q}{2} \right)^2
- 2(1 \lor (q-1)^{-1}) (\eta_2 - \eta_1)^2 (b^q + a^q).
\end{align*}
\end{lemma}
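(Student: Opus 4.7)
I would reduce the statement to the classical scalar inequality $(b-a)(b^{q-1}-a^{q-1}) \geq \tfrac{4(q-1)}{q^2}(b^{q/2}-a^{q/2})^2$, which is obtained by writing $b^{q/2}-a^{q/2}=(q/2)\int_a^b t^{q/2-1}\,dt$ and $b^{q-1}-a^{q-1}=(q-1)\int_a^b t^{q-2}\,dt$ and applying Cauchy--Schwarz. The two weights are then brought in via the algebraic identity
$$\eta_2^2 b^{q-1}-\eta_1^2 a^{q-1} = \eta_1\eta_2(b^{q-1}-a^{q-1}) + (\eta_2-\eta_1)(\eta_2 b^{q-1}+\eta_1 a^{q-1}),$$
so that, after multiplying by $(b-a)$, the left-hand side decomposes as $I+II$ with the obvious notation.

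The main piece $I$ is handled by combining the scalar inequality with the bound $(\eta_2 b^{q/2}-\eta_1 a^{q/2})^2 \leq 2\eta_1\eta_2(b^{q/2}-a^{q/2})^2 + 2(\eta_2-\eta_1)^2(a^q+b^q)$, which in turn follows from the decomposition $\eta_2 b^{q/2}-\eta_1 a^{q/2} = \tfrac{\eta_1+\eta_2}{2}(b^{q/2}-a^{q/2}) + \tfrac{\eta_2-\eta_1}{2}(b^{q/2}+a^{q/2})$ together with $(x+y)^2\leq 2x^2+2y^2$ and $(\eta_1+\eta_2)^2=4\eta_1\eta_2+(\eta_2-\eta_1)^2$. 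This already yields $I \geq \tfrac{2(q-1)}{q^2}(\eta_2 b^{q/2}-\eta_1 a^{q/2})^2 - \tfrac{4(q-1)}{q^2}(\eta_2-\eta_1)^2(a^q+b^q)$, an estimate of the required shape whose leading constant is much larger than $\tfrac{q-1}{32q^2}$, leaving room for the subsequent absorption.

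The error $II$ is absorbed using Young's inequality; this is the delicate step. WLOG $b\geq a$, so $II$ has the sign of $\eta_2-\eta_1$ and only the case $\eta_2<\eta_1$ is nontrivial. I would first bound $(b-a)(\eta_2 b^{q-1}+\eta_1 a^{q-1})$ by means of the auxiliary scalar inequalities $(b-a)b^{q-1}\leq b^q-a^q$ and $(b-a)a^{q-1}\leq q^{-1}(b^q-a^q)$, valid for $b\geq a\geq 0$ and $q\geq 1$ and proved by examining the ratio in the variable $u=a/b\in[0,1]$. The resulting product $(b^{q/2}-a^{q/2})(b^{q/2}+a^{q/2})$ is then re-expressed in terms of $\eta_2 b^{q/2}-\eta_1 a^{q/2}$ and $(\eta_2-\eta_1)(b^{q/2}+a^{q/2})$ via the identity used for $I$, and finally $|xy|\leq \varepsilon x^2+(4\varepsilon)^{-1}y^2$ with $\varepsilon$ of order $(q-1)/q^2$ is applied. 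The main obstacle is quantitative: the Young parameter must be small enough that the resulting $\varepsilon(\eta_2 b^{q/2}-\eta_1 a^{q/2})^2$ contribution can be subtracted from the $I$-estimate while still leaving at least $\tfrac{q-1}{32q^2}$ on the main term, and simultaneously the conjugate factor $\varepsilon^{-1}\sim q^2/(q-1)$, after combining with a residual factor of $q^{-1}$ coming from the $(b-a)a^{q-1}$ bound, must produce a total error coefficient no larger than $2(1\lor(q-1)^{-1})(a^q+b^q)$. It is this simultaneous control at both ends $q\to 1^+$ (where $(q-1)^{-1}$ is needed) and $q\to\infty$ (where the $\lor 1$ safeguards against blow-up) that forces the particular form of the constants in the statement.
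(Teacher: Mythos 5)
The paper offers no proof of this lemma at all: it is quoted verbatim from \cite[Lemma 3.7]{Kas07}, so the only question is whether your plan closes on its own, and it does not. Your decomposition into $I+II$, the scalar bound $(b-a)(b^{q-1}-a^{q-1})\ge \tfrac{4(q-1)}{q^2}(b^{q/2}-a^{q/2})^2$, and the identity converting $\eta_1\eta_2(b^{q/2}-a^{q/2})^2$ into $\tfrac12(\eta_2 b^{q/2}-\eta_1 a^{q/2})^2$ minus an error are all correct. The genuine gap is the step you yourself call delicate: it is not delicate but impossible with the budget your $I$-estimate leaves. After weakening $I$ to $\tfrac{2(q-1)}{q^2}(\eta_2 b^{q/2}-\eta_1 a^{q/2})^2-\tfrac{4(q-1)}{q^2}(\eta_2-\eta_1)^2(a^q+b^q)$, the surplus available on the main term is only of order $(q-1)/q^2$, while the error coefficient still allowed is bounded (at most $2(1\lor(q-1)^{-1})-\tfrac{4(q-1)}{q^2}$); Young's inequality can absorb the cross term only at the price of an error coefficient of order $q^2/(q-1)$, which violates the statement for large $q$. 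Concretely, take $q=100$, $a=0$, $b=1$, $\eta_2=1$, $\eta_1=1.1$. Then $II=(\eta_2-\eta_1)\eta_2 b^q=-0.1$ exactly, your lower bound for $I$ equals $\tfrac{198}{10^4}-\tfrac{396}{10^4}\cdot 0.01\approx 0.0194$, so your chain can give at best $0.0194-0.1\approx-0.081$, whereas the right-hand side of the lemma is $\tfrac{99}{32\cdot 10^4}-2\cdot 0.01\approx-0.020$. The lemma itself holds there with huge slack (the true left-hand side is $\eta_2^2b^q=1$), but once $I$ has been replaced by your bound, the remaining inequality is false, no matter how cleverly $II$ is estimated. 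The $q^{-1}$ factor you hope to salvage from $(b-a)a^{q-1}\le q^{-1}(b^q-a^q)$ cannot help: it only attenuates the $\eta_1 a^{q-1}$ half of $II$, which vanishes in this example, while the $\eta_2 b^{q-1}$ half alone already exceeds the budget.

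The structural reason is that the Cauchy--Schwarz constant $\tfrac{4(q-1)}{q^2}$ loses a factor of order $q$ relative to the true size of $I$, and the statement's error coefficient $2(1\lor(q-1)^{-1})$ stays bounded as $q\to\infty$, so every bit of that loss must be paid for out of a budget that does not grow. A proof that reaches the stated constants must therefore not funnel all of $I$ through the scalar inequality: the arguments behind this lemma (the cited one in \cite{Kas07}, and the analogous Caccioppoli algebra in \cite{DCKP16,Coz17}) keep the left-hand side at full strength in the regime where the cross term dominates, typically via a case distinction (for instance comparing $\eta_1$ with a multiple of $\eta_2$, or $\eta_1 a^{q/2}$ with $\eta_2 b^{q/2}$), invoking the Cauchy--Schwarz-type bound only when the two weights are comparable and showing in the complementary regime that the term $-2(1\lor(q-1)^{-1})(\eta_2-\eta_1)^2(a^q+b^q)$ dominates outright. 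Your plan works for $q$ near $1$, where the $(q-1)^{-1}$ in the error coefficient gives you room, but as written it cannot prove the lemma for all $q>1$.
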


\begin{lemma} \label{lem:inequality4}
For $a,b,x,y \in \R$
\begin{align*}
(b-a)(by^2 - ax^2) \geq \frac{1}{4} (b-a)^2 (y^2 + x^2) - 4(b^2 + a^2) (y-x)^2.
\end{align*}
\end{lemma}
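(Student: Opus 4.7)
The plan is to reduce the inequality to a transparent algebraic identity followed by a single application of Young's inequality. The starting point is the elementary decomposition
\begin{equation*}
by^2 - ax^2 = \tfrac{b+a}{2}(y^2 - x^2) + \tfrac{b-a}{2}(y^2 + x^2),
\end{equation*}
which, after multiplying by $b-a$ and using $(b-a)(b+a) = b^2 - a^2$, yields the identity
\begin{equation*}
(b-a)(by^2 - ax^2) = \tfrac{(b-a)^2}{2}(y^2 + x^2) + \tfrac{(b^2 - a^2)(y-x)(y+x)}{2}.
\end{equation*}
The first term on the right is already nonnegative, so the task reduces to controlling the (possibly negative) cross term by the two error terms on the right-hand side of the desired inequality.

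For the cross term, I would apply the pointwise bounds $|b+a| \leq \sqrt{2(a^2+b^2)}$ and $|x+y| \leq \sqrt{2(x^2+y^2)}$, which give
\begin{equation*}
\tfrac{|b^2 - a^2|\,|y-x|\,|y+x|}{2} \leq |b-a|\,|y-x|\,\sqrt{(a^2+b^2)(x^2+y^2)}.
\end{equation*}
Then the weighted AM-GM inequality $pq \leq \tfrac{p^2}{4} + q^2$, with $p = |b-a|\sqrt{x^2+y^2}$ and $q = \sqrt{a^2+b^2}\,|y-x|$, produces
\begin{equation*}
\tfrac{(b^2-a^2)(y-x)(y+x)}{2} \geq -\tfrac{(b-a)^2}{4}(x^2+y^2) - (a^2+b^2)(y-x)^2.
\end{equation*}

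Combining with the identity above gives
\begin{equation*}
(b-a)(by^2-ax^2) \geq \tfrac{(b-a)^2}{4}(x^2+y^2) - (a^2+b^2)(y-x)^2,
\end{equation*}
which is in fact stronger than the asserted bound (the constant $-1$ is improved in place of $-4$), so the inequality follows a fortiori from the trivial estimate $-(a^2+b^2)(y-x)^2 \geq -4(a^2+b^2)(y-x)^2$. There is no real obstacle here; the only nontrivial step is recognizing the splitting in the first displayed equation, after which everything reduces to a single Young-type absorption.
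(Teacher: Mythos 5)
Your proof is correct and takes essentially the same route as the paper, which deduces the lemma from the very same splitting $by^2-ax^2=\tfrac12(b-a)(y^2+x^2)+\tfrac12(b+a)(y^2-x^2)$ followed by a Young-type absorption of the cross term. Your execution even gives the slightly sharper constant $1$ in place of $4$ in front of $(b^2+a^2)(y-x)^2$, so the stated bound follows a fortiori.
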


\Cref{lem:inequality4} follows from the equality $by^2 - ax^2 = \frac{1}{2} (b-a)(y^2+x^2) + \frac{1}{2} (b+a)(y^2 -x^2)$ and Young's inequality.

\section{Existence and uniqueness of the Green function} \label{sec:existence}

In this section we establish the existence and uniqueness results, \Cref{thm:existence}, by adapting the ideas in \cite{GrWi82,BeFr02,KaSt02}. We start by constructing regularized Green functions.

\begin{lemma}
Let $\alpha \in (0, 2)$, $\cl > 0$, and assume that $k$ satisfies \eqref{assum:E-lower}. Let $\Omega$ be a bounded open set. 
For every $y_0 \in \Omega$ and $\rho > 0$ with $B_\rho(y_0) \subset \Omega$,
there exists a unique nonnegative function $G_\rho(\cdot, y_0) \in H_\Omega(\Rd;k)$ satisfying
\begin{align} \label{eq:regularized-G}
\mathcal{E}(G_\rho(\cdot, y_0), \varphi) = \fint_{B_\rho(y_0)} \varphi(x) \dx \quad\text{for all}~ \varphi \in H_\Omega(\Rd;k).
\end{align}
\end{lemma}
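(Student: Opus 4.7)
The plan is a direct application of the Riesz representation theorem on the Hilbert space $H_\Omega(\R^d; k)$, viewed with $\mathcal{E}(\cdot,\cdot)$ as inner product, after first upgrading $\mathcal{E}(\cdot,\cdot)^{1/2}$ to an equivalent norm via a Poincar\'e inequality and then verifying that $\varphi \mapsto \fint_{B_\rho(y_0)} \varphi \dx$ is a bounded linear functional.

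First, I would establish the Poincar\'e-type bound $\|u\|_{L^2(\Omega)}^2 \leq C \mathcal{E}(u,u)$ for every $u \in H_\Omega(\R^d; k)$, with $C = C(d, \alpha_0, \cl, |\Omega|)$. This chains three inputs: (a) assumption \eqref{assum:E-lower}, giving $\mathcal{E}(u,u) \geq \cl \mathcal{E}^\alpha(u,u)$; (b) the fractional Sobolev inequality \Cref{thm:Sobolev}(i), giving $\|u\|_{L^{2^\ast}(\R^d)}^2 \lesssim \mathcal{E}^\alpha(u,u)$ uniformly in $\alpha \in [\alpha_0, 2)$; and (c) H\"older's inequality on $\Omega$, exploiting that $u$ vanishes outside $\Omega$. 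As a consequence, $\mathcal{E}(\cdot,\cdot)^{1/2}$ is an equivalent norm on $H_\Omega(\R^d; k)$, which is already a Hilbert space by \cite{FKV15}.

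Second, by Cauchy--Schwarz and the above Poincar\'e inequality,
\[
\left| \fint_{B_\rho(y_0)} \varphi \dx \right| \leq |B_\rho(y_0)|^{-1/2} \|\varphi\|_{L^2(\Omega)} \leq C \mathcal{E}(\varphi,\varphi)^{1/2},
\]
so $F : \varphi \mapsto \fint_{B_\rho(y_0)} \varphi \dx$ is a bounded linear functional on $(H_\Omega(\R^d; k), \mathcal{E})$. The Riesz representation theorem then produces a unique $G_\rho(\cdot, y_0) \in H_\Omega(\R^d; k)$ satisfying \eqref{eq:regularized-G}.

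Finally, for nonnegativity I would test \eqref{eq:regularized-G} against $\varphi = -G_\rho^-(\cdot, y_0)$. Since $|u^-(y) - u^-(x)| \leq |u(y) - u(x)|$ pointwise, $G_\rho^-$ belongs to $H_\Omega(\R^d; k)$ and is an admissible test function. The right-hand side equals $-\fint_{B_\rho(y_0)} G_\rho^- \dx \leq 0$, while on the left, the symmetry of $k$ together with the pointwise identity $(u^+(y) - u^+(x))(u^-(y) - u^-(x)) = -u^+(y)u^-(x) - u^+(x)u^-(y) \leq 0$ yields $\mathcal{E}(G_\rho, -G_\rho^-) = -\mathcal{E}(G_\rho^+, G_\rho^-) + \mathcal{E}(G_\rho^-, G_\rho^-) \geq \mathcal{E}(G_\rho^-, G_\rho^-)$. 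Combining gives $\mathcal{E}(G_\rho^-, G_\rho^-) \leq 0$, and the Poincar\'e inequality from the first step forces $G_\rho^- \equiv 0$ a.e. I do not expect a genuine obstacle: the entire argument is standard Hilbert space machinery once the Poincar\'e inequality is in place, and the Beurling--Deny-type computation used for nonnegativity is routine for symmetric bilinear forms.
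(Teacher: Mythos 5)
Your proof is correct and takes essentially the same route as the paper: the paper applies the Lax--Milgram theorem on $H_\Omega(\Rd;k)$, obtaining coercivity of $\mathcal{E}$ from \cite[Lemma 2.9]{FKV15} via \eqref{assum:E-lower}, and cites \cite{GrWi82} for the nonnegativity, which is the same variational sign argument you carry out. The only difference is that you make the coercivity (Poincar\'e via Sobolev and H\"older) and the truncation computation explicit, and use the Riesz representation theorem, which for this symmetric coercive form is equivalent to Lax--Milgram.
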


\begin{proof}
It is clear that $\mathcal{E}$ is a continuous bilinear form on $H_\Omega(\Rd;k)$
and that the map $\varphi \mapsto \fint_{B_\rho(y_0)} \varphi$ is a continuous linear functional on $H_\Omega(\Rd;k)$. 
Moreover, coercivity of $\mathcal{E}$ follows from 
\cite[Lemma 2.9]{FKV15} by means of the assumption \eqref{assum:E-lower}. By the Lax--Milgram Theorem there exists a unique function $G_\rho(\cdot, y_0) \in H_\Omega(\Rd;k)$ satisfying \eqref{eq:regularized-G}. 
The nonnegativity of $G_\rho$ follows by a standard argument, see, for instance, the proof of \cite[Theorem 1.1]{GrWi82}.
\end{proof}

To pass the limit from \eqref{eq:regularized-G}, we provide uniform estimates of $G_\rho(\cdot, y_0)$ in $W^{\beta/2, q}_\Omega(\Rd)$
for all $\beta \in (0, \alpha)$ and $q \in [1, d/(d-\alpha/2))$, which are independent of $\rho$ and $y_0$. Note that we do not care about $\alpha$ dependence of the constant $C$ in the following proposition.

\begin{proposition} \label{prop:uniform_estimates}
Let $\alpha \in (0, 2)$, $\cl > 0$, and assume that $k$ satisfies \eqref{assum:E-lower}. Let $\Omega$ be a bounded open set. Then, for any $\beta \in (0,\alpha)$ and $q \in [1, d/(d-\alpha/2))$,
\begin{equation} \label{eq:seminorm-bq}
[G_\rho(\cdot, y_0)]_{W^{\beta/2, q}(\Rd)} \leq C
\end{equation}
for some constant $C$ independent of $\rho$ and $y_0$.
\end{proposition}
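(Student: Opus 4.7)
The plan is to adapt the variational approach of Gr\"uter--Widman and \cite{KaSt02}: test \eqref{eq:regularized-G} with a bounded nonlinear truncation of $G_\rho$ to convert the $L^{\infty}$-control of the right-hand side into Sobolev integrability of $G_{\rho}$, and then use the algebraic inequalities of \Cref{sec:prelims} to transfer that integrability into fractional differentiability.

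For $s\in(0,1)$ to be chosen, I substitute $\varphi_{s}:=G_{\rho}(\cdot,y_{0})\bigl(1+G_{\rho}(\cdot,y_{0})^{s}\bigr)^{-1/s}$ into \eqref{eq:regularized-G}; since $0\le\varphi_{s}\le 1$ and the truncation is $H_{\Omega}(\Rd;k)$-compatible, this is a legitimate test function. Applying \Cref{lem:inequality1} pointwise to $a=G_{\rho}(x,y_{0})$, $b=G_{\rho}(y,y_{0})$ and integrating against $k(x,y)\dx\dy$ gives
\[
c_{s}\,\mathcal{E}(v_{s},v_{s})\le\mathcal{E}\bigl(G_{\rho}(\cdot,y_{0}),\varphi_{s}\bigr)=\fint_{B_{\rho}(y_{0})}\varphi_{s}\le 1,
\]
where $v_{s}:=(1+G_{\rho}(\cdot,y_{0}))^{(1-s)/2}-1\in H_{\Omega}(\Rd;k)$ and $c_{s}>0$ depends only on $s$. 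Combining this with \eqref{assum:E-lower} and the fractional Sobolev inequality \Cref{thm:Sobolev}(i), I obtain $\|v_{s}\|_{L^{2d/(d-\alpha)}(\Rd)}\le C_{s}$, and undoing the substitution produces a uniform $L^{(1-s)d/(d-\alpha)}(\Omega)$-bound for $G_{\rho}(\cdot,y_{0})$ with constants depending only on $s,d,\alpha_{0},\cl,|\Omega|$ and independent of $\rho,y_{0}$.

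To reach the fractional seminorm, I would dominate $|G_{\rho}(y,y_{0})-G_{\rho}(x,y_{0})|$ via \Cref{lem:inequality1-lower} by $C_{s}|v_{s}(y)-v_{s}(x)|\cdot M_{s}(x,y)$, where $M_{s}(x,y):=\max\{(1+G_{\rho}(x,y_{0}))^{(1+s)/2},(1+G_{\rho}(y,y_{0}))^{(1+s)/2}\}$. Inserting this into $[G_{\rho}(\cdot,y_{0})]_{W^{\beta/2,q}(\Rd)}^{q}$ on $\Omega\times\Omega$, splitting $|x-y|^{-d-\beta q/2}=|x-y|^{-(d+\alpha)q/2}\cdot|x-y|^{(d+\alpha-\beta)q/2-d}$, and applying H\"older with conjugate exponents $2/q$ and $2/(2-q)$ (permissible since $q<d/(d-\alpha/2)<2$) produces a bound by $\bigl(\mathcal{E}^{\alpha}(v_{s},v_{s})\bigr)^{q/2}$ times
\[
\Bigl(\int_{\Omega}\int_{\Omega}(1+G_{\rho}(x,y_{0}))^{q(1+s)/(2-q)}|x-y|^{-d+(\alpha-\beta)q/(2-q)}\dy\dx\Bigr)^{(2-q)/2}.
\]
The weight $|x-y|^{-d+(\alpha-\beta)q/(2-q)}$ is locally integrable because $\beta<\alpha$, and the required $L^{1}$-integrability of $(1+G_{\rho})^{q(1+s)/(2-q)}$ is ensured by the previous step provided $q(1+s)/(2-q)\le(1-s)d/(d-\alpha)$; a short calculation shows that this inequality admits a positive solution in $s$ precisely when $q<d/(d-\alpha/2)$, which is the hypothesis.

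The main obstacle I anticipate is the tail contribution $\int_{\Omega}\int_{\Rd\setminus\Omega}|G_{\rho}(x,y_{0})|^{q}|x-y|^{-d-\beta q/2}\dy\dx$ to the $W^{\beta/2,q}(\Rd)$-seminorm; its inner integral grows like $\mathrm{dist}(x,\partial\Omega)^{-\beta q/2}$, which is not controlled by the $L^{p}$-bound on $G_{\rho}$ alone. This term can nevertheless be absorbed by the same H\"older scheme, applied to the pair $(a,b)=(G_{\rho}(x,y_{0}),0)$ in \Cref{lem:inequality1-lower}, after shrinking $s$ slightly to accommodate the additional weight. Crucially, the uniformity of all constants in $\rho$ and $y_{0}$ is built in, since the right-hand side $\fint_{B_{\rho}(y_{0})}\varphi_{s}\le 1$ of the test equation is bounded by $1$ irrespective of the mollifier.
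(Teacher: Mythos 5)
Your core argument is the paper's: the same test function $G_\rho(1+G_\rho^s)^{-1/s}$, \Cref{lem:inequality1} for the energy bound, \eqref{assum:E-lower} together with \Cref{thm:Sobolev}(i) for the $L^{2^\ast}$-bound on $(1+G_\rho)^{(1-s)/2}-1$, and then \Cref{lem:inequality1-lower} plus H\"older with exponents $2/q$ and $2/(2-q)$; your admissibility condition $q(1+s)/(2-q)\le (1-s)d/(d-\alpha)$ is exactly the paper's choice \eqref{eq:s} relaxed to an inequality, and your criterion $q<d/(d-\alpha/2)$ for solvability in $s$ is correct. Up to the exterior term, this is the paper's proof.

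The gap is in your treatment of the tail $\int_\Omega\int_{\Rd\setminus\Omega}G_\rho(x)^q|x-y|^{-d-\beta q/2}\dy\dx$. Your proposed remedy (apply the same H\"older scheme with $(a,b)=(G_\rho(x),0)$ ``after shrinking $s$ slightly'') does not work: the second H\"older factor becomes
\begin{equation*}
\Bigl(\int_{\Omega}\int_{\Rd\setminus\Omega}(1+G_\rho(x))^{(1+s)\frac{q}{2-q}}\,|x-y|^{-d+(\alpha-\beta)\frac{q}{2-q}}\dy\dx\Bigr)^{\frac{2-q}{2}},
\end{equation*}
and since $\beta<\alpha$ the exponent of $|x-y|$ exceeds $-d$, so the inner integral over the unbounded set $\Rd\setminus\Omega$ diverges at infinity; this exponent is independent of $s$, so no choice of $s$ helps. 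The alternative crude bound leaves $\int_\Omega G_\rho(x)^q\,\mathrm{dist}(x,\partial\Omega)^{-\beta q/2}\dx$, which cannot be controlled by the $L^p$-bound on $G_\rho$ for a general bounded open $\Omega$: no boundary regularity is assumed, and $\mathrm{dist}(\cdot,\partial\Omega)^{-\epsilon}$ need not even lie in $L^1(\Omega)$.

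The difficulty is an artifact of splitting the $y$-integration at $\partial\Omega$. Split instead at the scale $R:=\mathrm{diam}(\Omega)$, as the paper does. For $y\in B_R(x)$ — including points $y\notin\Omega$, where $G_\rho(y)=0$ and \Cref{lem:inequality1-lower} applies with $b=0$ — your H\"older estimate goes through unchanged, because the weight $|x-y|^{-d+(\alpha-\beta)\frac{q}{2-q}}$ is integrable over $B_R(x)$ with a bound depending only on $d,\alpha-\beta,q,R$. For $y\in\Rd\setminus B_R(x)$ one automatically has $y\notin\Omega$, so the integrand is $G_\rho(x)^q|x-y|^{-d-\beta q/2}$ and $\int_{\Rd\setminus B_R(x)}|x-y|^{-d-\frac{\beta}{2}q}\dy=\frac{2}{\beta q}|\mathbb{S}^{d-1}|R^{-\frac{\beta}{2}q}$ uniformly in $x$; the remaining factor $\int_\Omega G_\rho^q\dx$ is bounded via H\"older by the $L^{(1-s)d/(d-\alpha)}(\Omega)$-bound you already derived (cf.\ \eqref{eq:Lp}), using $q\le(1-s)d/(d-\alpha)$. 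With this modification your argument is complete and coincides with the paper's proof.
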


\begin{proof}
Let us write $G_\rho(x) = G_\rho(x,y_0)$. Let
\begin{equation*}
s = \frac{d(1-q)+\frac{\alpha}{2}q}{d-\frac{\alpha}{2}q} \in (0, 1)
\end{equation*}
so that
\begin{equation} \label{eq:s}
\frac{1-s}{2} 2^{\ast} = (1+s) \frac{q}{2-q},
\end{equation}
where $2^{\ast} = \frac{2d}{d-\alpha}$ is the Sobolev exponent. We take $\varphi(x) = G_\rho(x) (1+ G_\rho(x)^s)^{-1/s}$ as a test function in \eqref{eq:regularized-G} and then use \Cref{lem:inequality1} to obtain
\begin{equation*}
c(s) \, \mathcal{E} \left( (1+G_\rho)^\frac{1-s}{2}, (1+G_\rho)^\frac{1-s}{2} \right) \leq \mathcal{E}(G_\rho, \varphi) = \fint_{B_\rho(y_0)} \varphi(x) \dx \leq 1,
\end{equation*}
where the last inequality follows from the fact that $\varphi \in [0,1]$.
Since $k$ satisfies \eqref{assum:E-lower}, we have
\begin{align} \label{eq:seminorm}
\left[ (1+G_\rho)^\frac{1-s}{2} \right]_{W^{\alpha/2, 2}(\Rd)} \leq C
\end{align}
for some $C > 0$. Moreover, since $G_{\rho}$ vanishes outside $\Omega$, the function $(1+G_\rho)^{(1-s)/2}-1$ has a compact support. Therefore, \Cref{thm:Sobolev} (i) implies
\begin{equation} \label{eq:Lp}
\left\| (1+G_\rho)^{\frac{1-s}{2}}-1 \right\|_{L^{2^{\ast}}(\Omega)} = \left\| (1+G_\rho)^{\frac{1-s}{2}}-1 \right\|_{L^{2^{\ast}}(\Rd)} \leq C
\end{equation}
for some positive constant $C$, which may depend on $d, s, \cl, \alpha$.

Let us now estimate
\begin{equation*}
\begin{split}
[G_\rho]_{W^{\beta/2, q}(\Rd)}^q 
&\leq 2(2-\alpha)\int_{\Omega} \int_{B_R(x)} \frac{|G_\rho(y) - G_\rho(x)|^q}{|x-y|^{d+\frac{\beta}{2}q}} \dy \dx \\
&\quad + 2(2-\alpha) \int_{\Omega} \int_{\Rd \setminus B_R(x)} \frac{G_\rho(x)^q}{|x-y|^{d+\frac{\beta}{2}q}} \dy \dx \\
&=: 2I_1 + 2I_2,
\end{split}
\end{equation*}
where $R = \mathrm{diam}(\Omega)$. We first use \Cref{lem:inequality1-lower} and H\"older's inequality with $p=2/q$ to have
\begin{equation} \label{eq:local} 
\begin{split}
I_1
&\leq C \int_{\Omega} \int_{B_R(x)} \frac{|(1+G_\rho(y))^{\frac{1-s}{2}} - (1+G_\rho(x))^{\frac{1-s}{2}}|^q}{|x-y|^{(d+\alpha)\frac{q}{2}}} \frac{(1+G_\rho(x))^{\frac{1+s}{2}q}}{|x-y|^{d(1-\frac{q}{2})-(\alpha-\beta)\frac{q}{2}}} \dy \dx \\
&\leq C \left[ (1+G_\rho)^\frac{1-s}{2} \right]_{W^{\alpha/2, 2}(\Rd)}^q \left( \int_{\Omega} \int_{B_R(x)} \frac{(1+G_\rho(x))^{(1+s)\frac{q}{2-q}}}{|x-y|^{d-(\alpha-\beta)\frac{q}{2-q}}} \dy \dx \right)^{\frac{2-q}{2}}.
\end{split}
\end{equation} 
Using \eqref{eq:s} and \eqref{eq:Lp} yields
\begin{equation} \label{eq:local_Lp}
\int_\Omega \int_{B_R(x)} \frac{(1+G_\rho(x))^{(1+s)\frac{q}{2-q}}}{|x-y|^{d-(\alpha-\beta)\frac{q}{2-q}}} \dy \dx = \frac{2-q}{(\alpha-\beta)q} |\mathbb{S}^{d-1}| R^{(\alpha-\beta)\frac{q}{2-q}} \int_\Omega (1+G_\rho(x))^{\frac{1-s}{2}2^{\ast}} \dx \leq C
\end{equation}
for some constant $C = C(d, \alpha, \alpha-\beta, q, \cl, \mathrm{diam}(\Omega)) > 0$. Thus, it follows from \eqref{eq:seminorm}, \eqref{eq:local}, \eqref{eq:local_Lp}, and the triangle inequality that $I_1 \leq C$ with $C$ independent of $\rho$ and $y_0$.

For $I_2$, we observe
\begin{equation*}
\int_{\Rd \setminus B_R(x)} |x-y|^{-d-\frac{\beta}{2}q} \dy = \frac{2}{\beta q}|\mathbb{S}^{d-1}| R^{-\frac{\beta}{2}q}.
\end{equation*}
Thus, by H\"older's inequality we obtain
\begin{equation*}
I_2 \leq C \int_{\Omega} G_{\rho}(x)^q \dx \leq C \left( \int_{\Omega} G_{\rho}(x)^{\frac{1-s}{2}2^{\ast}} \dx \right)^{\frac{2q}{(1-s)2^{\ast}}}
\end{equation*}
for some constant $C = C(d, \alpha, \beta, q, R, |\Omega|) > 0$. Therefore, it follows from \eqref{eq:Lp} $I_2 \leq C$.

We have estimated $I_1$ and $I_2$ by some constant $C$ independent of $\rho$ and $y_0$, which proves \eqref{eq:seminorm-bq}.
\end{proof}

We are now ready to prove \Cref{thm:existence}.

\begin{proof} [Proof of \Cref{thm:existence}]
Let us first prove that \eqref{eq:weak_Lp} holds for $G_{\rho}=G_{\rho}(\cdot, y_{0})$. We take the same super-level set and the same test function as in the proof of
\cite[Proposition 3.1]{KaSt02}, i.e., for $t > 0$, let $\Omega_{\rho, t} = \lbrace x \in \Rd: 
G_{\rho}(x) > t \rbrace$ and let $\varphi(x) = \max \lbrace 0, 1/t-1/G_{\rho}(x) \rbrace$.
We claim that for all $x, y \in \Rd$
\begin{align} \label{eq:ineq}
(G_{\rho}(y) - G_{\rho}(x)) (\varphi(y) - \varphi(x))
\geq \left( \log \left( \frac{G_{\rho}(y)}{t} \lor 1 \right) 
- \log \left( \frac{G_{\rho}(x)}{t} \lor 1 \right) \right)^2. 
\end{align}
Indeed, when $(x, y) \in \Omega_{\rho, t} \times \Omega_{\rho, t}$, we have $\varphi(x) = 1/t 
- 1/G_{\rho}(x)$ and $\varphi(y) = 1/t - 1/G_{\rho}(y)$. Thus, \Cref{lem:inequality2} gives
\begin{align*}
(G_{\rho}(y) - G_{\rho}(x))(\varphi(y) - \varphi(x)) 
&= (G_{\rho}(y) - G_{\rho}(x)) \left( \frac{1}{G_{\rho}(x)} - \frac{1}{G_{\rho}(y)} \right) \\
&\geq \left( \log G_{\rho}(y) - \log G_{\rho}(x) \right)^2.
\end{align*}
Since $G_{\rho}(x) > t$ and $G_{\rho}(y) > t$ in this case, we arrive at \eqref{eq:ineq}. 
The case $(x, y) \in \Omega_{\rho, t}^c \times \Omega_{\rho, t}^{c}$ is obvious 
because both sides of \eqref{eq:ineq} become 0.
When $(x, y) \in \Omega_{\rho, t}^{c} \times \Omega_{\rho, t}$ we have $G_{\rho}(y) > t \geq G_{\rho}(x)$, 
and hence $\varphi(x) = 0$ and $\varphi(y) = 1/t - 1/G_{\rho}(y)$. Thus,
\begin{align*}
(G_{\rho}(y) - G_{\rho}(x))(\varphi(y) - \varphi(x)) 
&= (G_{\rho}(y) - G_{\rho}(x)) \left( \frac{1}{t} - \frac{1}{G_{\rho}(y)} \right) \\
&\geq (G_{\rho}(y) - t) \left( \frac{1}{t} - \frac{1}{G_{\rho}(y)} \right).
\end{align*}
By using \Cref{lem:inequality2} again, we obtain
\begin{align*}
(G_{\rho}(y) - t) \left( \frac{1}{t} - \frac{1}{G_{\rho}(y)} \right)
&\geq \left( \log G_{\rho}(y) - \log t \right)^2 \\
&= \left( \log \left( \frac{G_{\rho}(y)}{t} \lor 1 \right) 
- \log \left( \frac{G_{\rho}(x)}{t} \lor 1 \right) \right)^2.
\end{align*}
A similar argument shows that \eqref{eq:ineq} holds true when $(x,y) \in \Omega_{\rho, t} \times \Omega_{\rho, t}^{c}$. Therefore, \eqref{eq:ineq} holds for all $x, y \in \Rd$.

We put the test function $\varphi$ into \eqref{eq:regularized-G}, and then 
use the inequality \eqref{eq:ineq}. Since $\varphi \leq 1/t$, we have
\begin{align*}
\frac{1}{t} 
&\geq \fint_{B_{\rho}(y_{0})} \varphi \dx
= \mathcal{E}(G_{\rho}, \varphi) \\
&\geq \int_{\Rd} \int_{\Rd} \left( \log \left( \frac{G_{\rho}(y)}{t} \lor 1 \right) 
- \log \left( \frac{G_{\rho}(x)}{t} \lor 1 \right) \right)^2 k(x,y) \dy \dx.
\end{align*}
By applying the assumption \eqref{assum:E-lower} on $u = \log( \frac{G_{\rho}}{t} \lor 1)$ we obtain
\begin{align*}
\frac{1}{t} 
\geq \cl (2-\alpha) \int_{\Rd} \int_{\Rd} \left( \log \left( \frac{G_{\rho}(y)}{t} \lor 1 \right) 
- \log \left( \frac{G_{\rho}(x)}{t} \lor 1 \right) \right)^2 |x-y|^{-d-\alpha} \dy \dx.
\end{align*}
Since a function $\log(G_{\rho}/t \lor 1)$ has a compact support in $\overline{\Omega}_{\rho, t}
\subset \overline{\Omega}$, we can apply \Cref{thm:Sobolev} (i) to obtain
\begin{align*}
\frac{1}{t} 
&\geq C \left( \int_{\Rd} \left| \log \left( \frac{G_{\rho}(x)}{t} \lor 1 \right)
\right|^{2d/(d-\alpha)} \dx \right)^{(d-\alpha)/d} \\ 
&\geq C \left( \int_{\Omega_{\rho, 2t}} \left| \log \left( \frac{G_{\rho}(x)}{t}
\right) \right|^{2d/(d-\alpha)} \dx \right)^{(d-\alpha)/d}
\geq C (\log 2)^2 | \Omega_{\rho, 2t} |^{(d-\alpha)/d}.
\end{align*}
Therefore, we conclude
\begin{align} \label{eq:G-weak}
[G_{\rho}]_{L^{d/(d-\alpha)}_\mathrm{weak}(\Rd)} = [G_{\rho}]_{L^{d/(d-\alpha)}_\mathrm{weak}(\Omega)}
= \sup_{t > 0} \, t | \Omega_{\rho, t} |^{(d-\alpha)/d} \leq C,
\end{align}
where $C$ depends on $d$, $\cl$, and $\alpha_0$ only. We remark that the uniform estimate \eqref{eq:G-weak} does not require any regularity of the boundary of $\Omega$.

Let us next prove the existence of a Green function of $L$ on $\Omega$. \Cref{prop:uniform_estimates} shows that for any $\beta \in (0,\alpha)$ and $q \in [1, d/(d-\alpha/2))$, $[G_{\rho}]_{W^{\beta/2,q}(\Rd)}$ is uniformly bounded. Moreover, we deduce from \eqref{eq:weak_Lq_embedding} and \eqref{eq:G-weak}
\begin{equation*}
\|G_{\rho}\|_{L^{q}(\Rd)} = \|G_{\rho}\|_{L^{q}(\Omega)} \leq C [G_{\rho}]_{L^{d/(d-\alpha)}_{\mathrm{weak}}(\Omega)} \leq C,
\end{equation*}
where $C$ depends on $d$, $\cl$, $\alpha_{0}$, and $|\Omega|$. Thus, $G_{\rho}$ is uniformly bounded in $W^{\beta/2,q}_{\Omega}(\Rd)$. By considering sequences $\rho_{i} \searrow 0$, $\beta_{i} \nearrow \alpha$, and $1< q_{i} \nearrow \frac{d}{d-\alpha/2}$, we find by a diagonal process a subsequence $G_{\rho_{i_{k}}}$ of $G_{\rho_{i}}$, which we denote by $G_{\rho_{k}}=G_{\rho_{i_{k}}}$, and a nonnegative function $G = G(\cdot, y_{0})$ such that
\begin{equation*}
G_{\rho_{k}} \rightharpoonup G \quad \text{in } \bigcap_{i=1}^{\infty} W^{\beta_{i}/2, q_{i}}_{\Omega}(\Rd).
\end{equation*}
In particular, we have
\begin{equation*}
G_{\rho_{k}} \rightharpoonup G \quad \text{in } W^{\beta/2, q}_{\Omega}(\Rd)
\end{equation*}
for all $\beta \in (0,\alpha)$ and $q \in [1, d/(d-\alpha/2))$ since we can find a large index $i$ so that $\beta_{i} > \beta$, $q_{i} > q$, and hence $W^{\beta_{i}/2, q_{i}}_{\Omega}(\Rd) \subset W^{\beta/2, q}_{\Omega}(\Rd)$ by \Cref{prop:cont-embedding}. Here, we used the following fact: if a linear map between two normed spaces is continuous with respect to their norms, then it is continuous with their respective weak topology. Note that we also have
\begin{equation} \label{eq:weak-L1}
G_{\rho_{k}} \rightharpoonup G \quad \text{in } L^{1}(\Omega)
\end{equation}
since $W^{\beta/2,1}_{\Omega}(\Rd) \subset L^{1}(\Omega)$, and we may assume $G_{\rho_{k}} \to G$ a.e. in $\Rd$.

We now prove that $G$ is a Green function of $L$ on $\Omega$. It is enough to check \eqref{eq:Green_function}. Let $\varphi \in H_{\Omega}(\Rd;k) \cap C(\Omega)$ and $\psi \in L^{\infty}(\Omega)$ satisfy \eqref{eq:DP}. Since $G_{\rho_{k}} \in H_{\Omega}(\Rd;k)$, we have from \eqref{eq:DP} and \eqref{eq:regularized-G}
\begin{equation} \label{eq:Gk}
\langle G_{\rho_{k}}(\cdot, y_{0}), \psi \rangle = \mathcal{E}(G_{\rho_{k}}(\cdot, y_{0}), \varphi) = \fint_{B_{\rho_{k}}(y_{0})} \varphi(x) \dx.
\end{equation}
Since $\psi \in L^{\infty}(\Omega)$, by \eqref{eq:weak-L1} the left-hand side of \eqref{eq:Gk} converges to $\langle G(\cdot, y_{0}), \psi \rangle$ as $k \to \infty$. Clearly, the right-hand side of \eqref{eq:Gk} converges to $\varphi(y_{0})$ as $k \to \infty$. Thus, the equality \eqref{eq:Green_function} follows.

To prove \eqref{eq:weak_Lp}, we set $\Omega_{t}=\lbrace x\in \Rd: G(x)>t \rbrace \subset \Omega$ for $t>0$. By \eqref{eq:weak_Lq_embedding} and \eqref{eq:G-weak} we have
\begin{equation*}
\|G_{\rho_{k}}\|_{L^{1}(\Omega_{t})} \leq \frac{d}{\alpha} |\Omega_{t}|^{\alpha/d} [G_{\rho_{k}}]_{L^{d/(d-\alpha)}_{\mathrm{weak}}(\Omega_{t})} \leq C \,|\Omega_{t}|^{\alpha/d}.
\end{equation*}
Since the $L^{1}$-norm is weakly lower semicontinuous, we obtain
\begin{equation*}
t |\Omega_{t}| \leq \|G\|_{L^{1}(\Omega_{t})} \leq \liminf_{k \to \infty} \|G_{\rho_{k}}\|_{L^{1}(\Omega_{t})} \leq C |\Omega_{t}|^{\alpha/d},
\end{equation*}
which implies
\begin{equation*}
[G]_{L^{d/(d-\alpha)}_{\mathrm{weak}}(\Rd)} = \sup_{t > 0} t |\Omega_{t}|^{(d-\alpha)/d} \leq C,
\end{equation*}
where $C$ depends only on $d$, $\cl$, and $\alpha_{0}$.

We next prove the uniqueness under the assumptions \eqref{assum:E-lower} and \eqref{assum:U1}. Assume that $G_{1}$ and $G_{2}$ are Green functions of $L$ on $\Omega$. Let $\psi \in L^{\infty}(\Omega)$. By the unique solvability of the Dirichlet problem \cite[Proposition 3.4]{FKV15}, there is a unique weak solution $\varphi \in H_{\Omega}(\Rd; k)$ of $-L\varphi=\psi$ in $\Omega$, i.e., $\mathcal{E}(\varphi, v)=\langle \psi, v \rangle$ for all $v \in H_{\Omega}(\Rd; k)$. The weak solution $\varphi$ is locally H\"older continuous in $\Omega$, see \cite[Theorem 1.1 (ii)]{KaWe22}, which covers more general nonsymmetric operators. Note that the assumption \eqref{assum:E-lower} implies (Poinc) and (Sob), and \eqref{assum:U1} implies (Cutoff) and ($\infty$-Tail). Since $\varphi$ is continuous, we conclude from the definition of the Green function
\begin{equation} \label{eq:G-unique}
\langle G_{1}(\cdot, y), \psi \rangle = \varphi(y) = \langle G_{2}(\cdot, y), \psi \rangle
\end{equation}
for every $y \in \Omega$. Since \eqref{eq:G-unique} holds for arbitrarily $\psi \in L^{\infty}(\Omega)$, we deduce $G_{1}(\cdot, y) = G_{2}(\cdot, y)$ a.e. in $\Omega$. Furthermore, since $G_{1}(\cdot, y)$ and $G_{2}(\cdot, y)$ are continuous on $\Omega \setminus \lbrace y \rbrace$, we conclude  $G_{1}(x, y) = G_{2}(x, y)$ for all $x, y \in \Omega$, $x\neq y$.
\end{proof}

\section{Pointwise upper bounds} \label{sec:upper-bounds}

This section is devoted to pointwise upper bounds for Green functions as stated in \Cref{thm:upper-bound}. The main idea is to prove a local boundedness result in \Cref{theo:local boundedness} involving a nonlocal tail. We control the tail by means of the uniform estimate \eqref{eq:weak_Lp}. The local boundedness with tail was first established in \cite{DCKP16} for weak solutions to nonlocal operators and made robust in \cite{Coz17}. It was further extended in \cite{Sch20} to more general operators with kernels satisfying \eqref{assum:E-lower} and \eqref{assum:U2}. The following uniform estimate is a simplified version of the result in \cite[Theorem 11.10]{Sch20}:
\begin{align} \label{eq:local_boundedness_2}
\sup_{B_{r/2}(x_0)} u 
\leq C \left( \fint_{B_r(x_0)} u_+^2(x) \dx \right)^{1/2} 
+ r^\alpha \int_{\Rd \setminus B_{r/2}(x_0)} \frac{u_+(y)}{|y-x_0|^{n+\alpha}} \dy.
\end{align}
However, this is not perfectly fit to Green functions because $L^2$-integrability for Green functions is not guaranteed. We need $L^q$-average with $q \in (1, d/(d-\alpha))$ instead of $L^2$-average in 
\eqref{eq:local_boundedness_2}. The local boundedness with $L^q$-average can be proved in the standard way as in \cite{DCKP16, Coz17, Sch20} by using Moser's iteration, but let us include the whole proof since it is nowhere written.

Let us first prove the following Caccioppoli-type estimates 
which will be used in the proof of the local boundedness. 

\begin{lemma} [Caccioppoli estimates] \label{lem:Caccioppoli}
Let $u \in H(\Rd; k)$ satisfy
\begin{align} \label{eq:subsolution}
\mathcal{E}(u, \varphi) \leq 0 \quad\text{for every nonnegative}~
\varphi \in H_{B_{\rho}(x_{0})}(\Rd; k).
\end{align}
For any $q > 1$ there exists a constant $C$, depending only on $q$, 
such that for any nonnegative function $\eta \in C_c^\infty(B_\rho(x_0))$
\begin{align*}
\int_{B_\rho(x_0)} \int_{B_\rho(x_0)} 
&\left( \eta(y) w^{q/2}(y) - \eta(x) w^{q/2}(x) \right)^2 k(x,y) \dy \dx \\
\leq& ~ C \int_{B_\rho(x_0)} \int_{B_\rho(x_0)} (w^q(y) + w^q(x))
|\eta(y) - \eta(x)|^2 k(x,y) \dy \dx \\
&+ C \int_{B_\rho(x_0)} \int_{\Rd \setminus B_\rho(x_0)} w(y) \eta^2(x) w^{q-1}(x) k(x,y) \dy \dx,
\end{align*}
where $w := (u-k)_+$ with $k \geq 0$.
\end{lemma}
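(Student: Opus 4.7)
The plan is to test the assumed inequality $\mathcal{E}(u,\varphi)\le 0$ against the nonnegative test function $\varphi = \eta^2 w^{q-1}$, which is supported in $B_\rho(x_0)$ because $\eta$ is. A preliminary issue is that $w$ is not a priori bounded, so $\varphi$ need not sit in $H_{B_\rho(x_0)}(\R^d;k)$; this I would handle in the standard way by truncating, i.e., working with $w_M = w\wedge M$, proving the estimate uniformly in $M$, and then invoking monotone/Fatou-type convergence as $M\to\infty$. I will suppress this truncation in what follows.

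Next I would split the double integral defining $\mathcal{E}(u,\varphi)$ into four pieces according to $\R^d\times\R^d = (B_\rho\times B_\rho)\cup(B_\rho\times B_\rho^c)\cup(B_\rho^c\times B_\rho)\cup(B_\rho^c\times B_\rho^c)$. The last piece vanishes since $\varphi\equiv 0$ on $B_\rho^c$, and by the symmetry of $k(x,y)$ the two off-diagonal pieces coincide. Writing $I_{\mathrm{loc}}$ for the integral over $B_\rho\times B_\rho$, the inequality $\mathcal{E}(u,\varphi)\le 0$ then reads
\begin{equation*}
I_{\mathrm{loc}} \;\le\; 2\int_{B_\rho(x_0)}\!\!\int_{\R^d\setminus B_\rho(x_0)} \bigl(u(y)-u(x)\bigr)\,\eta^2(x)w^{q-1}(x)\,k(x,y)\dy\dx.
\end{equation*}

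For $I_{\mathrm{loc}}$ I would first replace $u$ by $w$ via the pointwise inequality
\begin{equation*}
\bigl(u(y)-u(x)\bigr)\bigl(\eta^2(y)w^{q-1}(y)-\eta^2(x)w^{q-1}(x)\bigr) \;\ge\; \bigl(w(y)-w(x)\bigr)\bigl(\eta^2(y)w^{q-1}(y)-\eta^2(x)w^{q-1}(x)\bigr),
\end{equation*}
which is verified by a short case analysis on whether $u(x),u(y)$ lie above or below the threshold $k$ (the only nontrivial cases being when exactly one does, in which case the side with $w=0$ contributes zero to the right side while the left side is nonnegative by the sign of the factors). Then applying \Cref{lem:inequality3} with $a=w(x)$, $b=w(y)$, $\eta_1=\eta(x)$, $\eta_2=\eta(y)$ to the right-hand side yields
\begin{equation*}
I_{\mathrm{loc}}\;\ge\; \frac{q-1}{32q^2}\!\int_{B_\rho}\!\!\int_{B_\rho}\!\bigl(\eta(y)w^{q/2}(y)-\eta(x)w^{q/2}(x)\bigr)^2 k(x,y)\dy\dx - 2(1\vee(q-1)^{-1})\!\int_{B_\rho}\!\!\int_{B_\rho}\!\bigl(\eta(y)-\eta(x)\bigr)^2(w^q(y)+w^q(x))k(x,y)\dy\dx.
\end{equation*}

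Finally, for the tail term I would note that on $\{w(x)>0\}$ we have $u(x)=w(x)+k$, hence $u(y)-u(x)\le w(y)$ in all cases (if $u(y)\ge k$ then $u(y)-u(x)=w(y)-w(x)\le w(y)$, and if $u(y)<k$ then $u(y)-u(x)<0\le w(y)$), while on $\{w(x)=0\}$ the factor $w^{q-1}(x)$ already kills the integrand. This gives the tail bound
\begin{equation*}
2\int_{B_\rho}\!\!\int_{\R^d\setminus B_\rho}\!\bigl(u(y)-u(x)\bigr)\eta^2(x)w^{q-1}(x) k(x,y)\dy\dx \;\le\; 2\int_{B_\rho}\!\!\int_{\R^d\setminus B_\rho}\! w(y)\,\eta^2(x)w^{q-1}(x)k(x,y)\dy\dx.
\end{equation*}
Combining this with the lower bound on $I_{\mathrm{loc}}$ and dividing by $(q-1)/(32q^2)$ produces the claim with a constant $C$ depending only on $q$. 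The main obstacle is purely algebraic: choosing the right pointwise inequality to dominate $(u(y)-u(x))(\varphi(y)-\varphi(x))$ from below by something controlling $(\eta w^{q/2}(y)-\eta w^{q/2}(x))^2$ with only a mild error involving $(\eta(y)-\eta(x))^2$, and the corresponding upper bound for the tail; both are packaged cleanly by \Cref{lem:inequality3} together with the case analysis above.
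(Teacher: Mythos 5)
Your argument is essentially the paper's proof: you test with $\varphi=\eta^2 w^{q-1}$, split $\mathcal{E}(u,\varphi)$ into the $B_\rho\times B_\rho$ part and (using symmetry of $k$) twice the mixed part, bound the local part from below via the pointwise replacement of $u$ by $w$ together with \Cref{lem:inequality3}, and bound the tail using $u(y)-u(x)\le w(y)$ wherever $w(x)>0$, exactly as in the paper. The only difference is your explicit truncation remark to justify admissibility of the test function, which the paper passes over silently; everything is correct.
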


In \Cref{lem:Caccioppoli} and \Cref{theo:local boundedness} we assume $u \in H(\Rd; k)$, which is sufficient for our purposes. Of course, one could weaken the assumption by assuming regularity of $u$ only around the point $x_0$.

\begin{proof}
In this proof, let $B = B_\rho(x_0)$. Let $\eta : \Rd \rightarrow [0,1]$
be a smooth function with $\mathrm{supp} \, \eta \subset B$. By putting $\varphi = \eta^{2} w^{q-1} \in H_{B}(\Rd; k)$ into the equation \eqref{eq:subsolution}, we have
\begin{equation} \label{eq:I_1,I_2_Caccioppoli}
\begin{split}
0 \geq&~ \int_B \int_B (u(y) - u(x))(\varphi(y) - \varphi(x)) k(x,y) \dy \dx \\
&+ 2 \int_B \int_{\Rd \setminus B} (u(y) - u(x))(-\varphi(x)) k(x,y) \dy \dx
=: I_1 + I_2.
\end{split}
\end{equation}
For $I_1$, we use an inequality
\begin{align*}
(u(y) - u(x))(\varphi(y) - \varphi(x))
\geq (w(y) - w(x)) \left( \eta^2 (y) w^{q-1}(y) - \eta^2(x) w^{q-1}(x) \right)
\end{align*}
and \Cref{lem:inequality3} to obtain
\begin{equation} \label{eq:I_1_Caccioppoli}
\begin{split}
I_1 
\geq&~ \frac{q-1}{32q^2} \int_B \int_B \left( \eta(y) 
w^{q/2}(y) - \eta(x) w^{q/2}(x) \right)^2 k(x,y) \dy \dx \\
&- 2 \left( 1 \lor \frac{1}{q-1} \right) \int_B \int_B 
(w^q(y) + w^q(x)) |\eta(y) - \eta(x)|^2 k(x,y) \dy \dx \,.
\end{split}
\end{equation}
For $I_2$, we observe that
\begin{align*}
(u(y) - u(x)) (-\eta^2(x) w^{q-1}(x)) \geq (u(y) - k)(-\eta^2(x) w^{q-1}(x)) 
\geq -w(y) \eta^2(x) w^{q-1}(x),
\end{align*}
from which we estimate
\begin{equation} \label{eq:I_2_Caccioppoli}
I_2 \geq - 2 \int_B \int_{\Rd \setminus B} w(y) \eta^2(x) w^{q-1}(x) k(x,y) \dy \dx.
\end{equation}
Combining \eqref{eq:I_1,I_2_Caccioppoli}, \eqref{eq:I_1_Caccioppoli}, and
\eqref{eq:I_2_Caccioppoli}, we conclude the lemma.
\end{proof}

We next prove the local boundedness with $L^q$-average and tail by using Moser's iteration technique. See also \cite{DCKP16, Coz17, Sch20}.

\begin{theorem} \label{theo:local boundedness}
Let $0 < \alpha_0 \leq \alpha < 2$, $\cl, \cK > 0$, and assume that $k$ satisfies \eqref{assum:E-lower} and \eqref{assum:U2}. For any $q > 1$, there exists a constant $C$, depending only on $d$, $q$, $\cl$, $\cK$, and $\alpha_0$, such that if $u \in H(\Rd; k)$ satisfies \eqref{eq:subsolution}, then 
\begin{align*}
\sup_{B_{r/2}(x_0)} u 
\leq C \left( \fint_{B_r(x_0)} u_+^q(x) \dx \right)^{1/q} 
+ r^\alpha \int_{\Rd \setminus B_{r/2}(x_0)} 
\frac{u_+(y)}{|y-x_0|^{d+\alpha}} \dy \,.
\end{align*}
\end{theorem}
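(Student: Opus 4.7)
The plan is a Moser iteration combined with a De Giorgi truncation, in the spirit of \cite{DCKP16,Coz17,Sch20}.

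\textbf{One-step Moser estimate.} Fix radii $r/2 \leq \sigma' < \sigma \leq r$, a level $k \geq 0$, set $w := (u-k)_+$, and let $\eta \in C_c^\infty(B_\sigma(x_0))$ be a smooth cutoff with $\eta \equiv 1$ on $B_{\sigma'}(x_0)$, $\mathrm{supp}(\eta) \Subset B_\sigma(x_0)$, and $|\nabla \eta| \leq C/(\sigma - \sigma')$. Apply \Cref{lem:Caccioppoli} with exponent $q$. The local piece of its right-hand side is bounded by $C(\sigma-\sigma')^{-\alpha}\int_{B_\sigma} w^q$ using \eqref{assum:U2} together with the elementary bound
\begin{equation*}
(2-\alpha)\int_{\Rd} \bigl(|y-x|^2 \wedge N^{-2}\bigr)|y-x|^{-d-\alpha}\dy \leq C N^\alpha/\alpha_0,
\end{equation*}
in which the $(2-\alpha)$ factor from \eqref{assum:U2} cancels the $(2-\alpha)^{-1}$ singularity of the integral. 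For the nonlocal tail piece, the geometric inequality $|y-x| \geq c(\sigma-\sigma')/\sigma \cdot |y-x_0|$, valid for $x \in \mathrm{supp}(\eta)$ and $y \notin B_\sigma$, together with \eqref{assum:U2} and $B_\sigma \supset B_{r/2}(x_0)$, gives a bound by $C(\sigma/(\sigma-\sigma'))^{d+\alpha} r^{-\alpha} T \int_{B_\sigma} w^{q-1}$, where $T := r^\alpha \int_{\Rd \setminus B_{r/2}(x_0)} u_+(y)|y-x_0|^{-d-\alpha}\dy$. Then \eqref{assum:E-lower} together with \Cref{thm:Sobolev}~(ii) yields the one-step estimate
\begin{equation*}
\|w\|_{L^{q\kappa}(B_{\sigma'})}^q \leq C(\sigma-\sigma')^{-\alpha}\|w\|_{L^q(B_\sigma)}^q + C\bigl(\tfrac{\sigma}{\sigma-\sigma'}\bigr)^{d+\alpha} r^{-\alpha} T \|w\|_{L^{q-1}(B_\sigma)}^{q-1},
\end{equation*}
where $\kappa := d/(d-\alpha)$ and $C = C(d,q,\cl,\cK,\alpha_0)$.

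\textbf{Iteration.} Let $\rho_j := r/2 + r 2^{-j-1}$, $B_j := B_{\rho_j}(x_0)$, $k_j := M(1 - 2^{-j})$, $w_j := (u-k_j)_+$, and $Y_j := \int_{B_j} w_j^q$, where $M>0$ will be chosen. Apply the one-step estimate at $(\sigma,\sigma',k)=(\rho_j,\rho_{j+1},k_j)$. Three elementary inequalities close the iteration: \emph{(a)} Chebyshev gives $|\{u>k_{j+1}\} \cap B_{j+1}| \leq (M 2^{-j-1})^{-q} Y_j$; \emph{(b)} the level-gap trick uses $w_{j-1} \geq k_j - k_{j-1} = M 2^{-j}$ on $\{u>k_j\}$ to give $\int_{B_j} w_j^{q-1} \leq (2^j/M)\,Y_{j-1}$; \emph{(c)} H\"older applied on the support set $\{u>k_{j+1}\}$ gives $Y_{j+1} \leq \|w_j\|_{L^{q\kappa}(B_{j+1})}^q \cdot |\{u>k_{j+1}\}\cap B_{j+1}|^{1-1/\kappa}$. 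Combining (a)--(c) with the one-step estimate and the monotonicity $Y_j \leq Y_{j-1}$ produces the two-step De Giorgi recursion
\begin{equation*}
Y_{j+1} \leq C\, 2^{j\gamma}\, r^{-\alpha}\, M^{-q\beta}\,(1 + T/M)\, Y_{j-1}^{1+\beta}, \qquad \beta := \alpha/d,
\end{equation*}
for some $\gamma = \gamma(d,q)$. A standard iteration lemma, applied on the odd and even subsequences (using $Y_1 \leq Y_0$), yields $Y_j \to 0$ whenever $Y_0/(r^d M^q)\cdot (1+T/M)^{1/\beta}$ drops below a fixed constant depending only on $d,q,\cl,\cK,\alpha_0$. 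Choosing $M = C_*\bigl((\fint_{B_r(x_0)} u_+^q)^{1/q} + T\bigr)$ with $C_*$ large enough achieves this smallness, giving $u \leq M$ a.e.\ on $B_{r/2}(x_0)$, which is the claim.

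\textbf{Main obstacle.} The principal obstacle is the $L^{q-1}$-integral in the Caccioppoli tail, whose exponent mismatch would prevent a direct Moser iteration if $w$ were not truncated. The level-gap trick above is precisely what resolves this: it converts $w_j^{q-1}$ on $\{u>k_j\}$ into $w_{j-1}^q$ divided by the level-gap $M 2^{-j}$, which is why the parameter $M$ must be taken large compared to the tail $T$. A secondary point of care is robustness as $\alpha \to 2^-$: the cancellation of the $(2-\alpha)$ weights coming from \eqref{assum:U2} and \Cref{thm:Sobolev}~(ii) against the $(2-\alpha)^{-1}$ singularities arising when integrating $(|y-x|^2 \wedge N^{-2})|y-x|^{-d-\alpha}$ near the diagonal is essential and delivers constants depending only on $d,q,\cl,\cK,\alpha_0$ and not on $\alpha \in [\alpha_0,2)$.
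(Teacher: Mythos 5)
Your proposal is correct and takes essentially the same route as the paper: the Caccioppoli estimate at exponent $q$, coercivity \eqref{assum:E-lower} combined with the robust fractional Sobolev inequality, the level-gap trick to turn the $w^{q-1}$ tail term into $w^q$ divided by the truncation gap, and a level/radius iteration with the truncation parameter chosen at least as large as the tail $T$ --- the paper merely organizes the iteration with intermediate levels $\tilde k_j$ so as to get a one-step recursion $A_{j+1}/K \le \tilde C C_0^j (A_j/K)^\chi$ instead of your two-step recursion handled on even/odd subsequences. (One harmless slip: your displayed elementary bound should read $C N^{\alpha-2}/\alpha_0$ rather than $CN^{\alpha}/\alpha_0$, which after multiplying by the gradient factor $(\sigma-\sigma')^{-2}$ gives exactly the $(\sigma-\sigma')^{-\alpha}$ you actually use.)
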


\begin{proof}
For any $j = 0, 1, \cdots$, we define
\begin{align*}
&r_j = (1+2^{-j})\frac{r}{2}, \quad \tilde{r}_j = \frac{r_j+r_{j+1}}{2}, \quad 
B_j = B_{r_j}(x_0), \quad \tilde{B}_j = B_{\tilde{r}_j}(x_0), \\
&\eta_j \in C_c^\infty(\tilde{B}_j), \quad 0 \leq \eta_j \leq 1, \quad \eta_j \equiv 1
~\text{on}~ B_{j+1}, \quad |\nabla \eta_j| \leq 2^{j+3} / r, \\
&k_j = (1-2^{-j})K, \quad \tilde{k}_j = \frac{k_{j+1} + k_j}{2} \quad
\text{for some} ~ K \geq 0, ~ \text{which will be chosen later}, \\
&w_j = (u - k_j)_+, \quad \text{and} \quad \tilde{w}_j = (u - \tilde{k}_j)_+.
\end{align*}
By \Cref{lem:Caccioppoli} with $\rho = r_j, \eta = \eta_j$, and $k = \tilde{k}_j$, we obtain
\begin{align} \label{eq:I_1,I_2_LB}
\begin{split}
\int_{B_j} \int_{B_j} & \left( \eta_j(y) \tilde{w}_j^{q/2}(y) 
- \eta_j(x) \tilde{w}_j^{q/2}(x) \right)^2 k(x,y) \dy \dx \\
\leq&~ C \int_{B_j} \int_{B_j} \left( \tilde{w}_j^q(y) + \tilde{w}_j^q(x) \right) | \eta_j(y) 
- \eta_j(x) |^2 k(x,y) \dy \dx \\
&+ C \int_{B_j} \int_{\Rd \setminus B_j} \tilde{w}_j(y) \eta_j^2(x) \tilde{w}_j^{q-1}(x) 
k(x,y) \dy \dx =: I_1 + I_2.
\end{split}
\end{align}
Since $|\eta_j(y) - \eta_j(x)|^2 \leq 2^{2j+6} r^{-2} |y-x|^2$, the symmetry of $k$ and the assumption \eqref{assum:U2} yield
\begin{align} \label{eq:I_1_LB}
\begin{split}
I_1
&\leq C 2^{2j} r^{-2} (2-\alpha) \int_{B_j} \tilde{w}_j^q(x) \int_{B_j}  |y-x|^{2-d-\alpha} \dy \dx \\
&\leq C 2^{2j} r^{d-2} (2-\alpha) \fint_{B_j} w_j^q(x) \int_{B_j}  |y-x|^{2-d-\alpha} \dy \dx \\
&\leq C 2^{2j} r^{d-\alpha} \fint_{B_j} w_j^q(x) \dx,
\end{split}
\end{align}
where $C$ depends on $d$, $q$, $\cK$, and $\alpha_0$. Note that the assumption \eqref{assum:U1} is sufficient at this point for the estimate of $I_1$.

For $I_2$ we use the inequalities $\tilde{w}_j \leq w_0 = u_+$ and
\begin{align*}
\tilde{w}_j^{q-1} = (u - \tilde{k}_j)_+^{q-1} \leq \frac{(u-k_j)_+^q}{\tilde{k}_j - k_j}
= \frac{w_j^q}{\tilde{k}_j - k_j} \leq 4K^{-1} 2^j w_j^q,
\end{align*}
and the assumption \eqref{assum:U2} to obtain
\begin{align*}
I_2 \leq C \frac{2^j}{K} \int_{\tilde{B}_j} \int_{\Rd \setminus B_j}
\frac{u_+(y) w_j^q(x)}{|y-x|^{d+\alpha}} \dy \dx,
\end{align*}
where $C = C(q, \cK) > 0$. If $x\in \tilde{B}_j$ and $y \in \Rd \setminus B_j$, then
\begin{align*}
\frac{| y-x_0 |}{|y-x|} \leq 1+ \frac{| x-x_0 |}{|y-x|}
\leq 1+ \frac{\tilde{r}_j}{r_j - \tilde{r}_j} \leq C 2^j.
\end{align*}
Thus, we have
\begin{align} \label{eq:I_2_LB}
I_2 \leq C \frac{2^{j(d+\alpha+1)}}{K} r^{d-\alpha} \left( r^\alpha \int_{\Rd \setminus B_{r/2}(x_0)}
\frac{u_+(y)}{|y-x_0|^{d+\alpha}} \dy \right) \fint_{B_j} w_j^q(x) \dx.
\end{align}
On the other hand, applying the \Cref{thm:Sobolev} (ii)
to $\eta_j \tilde{w}_j^{q/2}$ leads us to
\begin{align} \label{eq:Sobolev}
\left( \int_{B_{j+1}} \tilde{w}_j^{q \chi}(x) \dx \right)^{1/\chi} \leq C \mathcal{E}^{\alpha}_{B_j}\left( \eta_j \tilde{w}_j^{q/2}, \eta_j \tilde{w}_j^{q/2} \right) + C r_j^{-\alpha} \int_{B_j} \tilde{w}_j^q(x) \dx,
\end{align}
where $\chi = d/(d-\alpha)$. Therefore, using \eqref{assum:E-lower} we 
combine inequalities \eqref{eq:I_1,I_2_LB}--\eqref{eq:Sobolev} as
\begin{align*}
\left( \fint_{B_{j+1}} \tilde{w}_j^{q \chi}(x) \dx \right)^{1/\chi} 
\leq C \left( 2^{2j} + \frac{2^{j(d+\alpha+1)}}{K} r^\alpha\int_{\Rd \setminus B_{r/2}(x_0)} \frac{u_+(y)}{| y - x_0 |^{d+\alpha}} \dy \right) \fint_{B_j} w_j^q(x) \dx.
\end{align*}
We set $A_j := (\fint_{B_j} w_j^q(x) \dx)^{1/q}$ and assume
\begin{align} \label{eq:K}
K \geq r^\alpha \int_{\Rd \setminus B_{r/2}(x_0)} 
\frac{u_+(y)}{| y - x_0 |^{d+\alpha}} \dy.
\end{align}
Then, since
\begin{align*}
\tilde{w}_j^{q \chi} 
= (u - \tilde{k}_j)_+^{q \chi} 
\geq (k_{j+1} - \tilde{k}_j)^{q (\chi - 1)} (u - k_{j+1})_+^q
= \left( \frac{K}{2^{j+2}} \right)^{q(\chi-1)} w_{j+1}^q,
\end{align*}
we obtain
\begin{align*}
\frac{A_{j+1}}{K} \leq \tilde{C} C_0^j \left( \frac{A_j}{K} \right)^\chi,
\end{align*}
where $\tilde{C} = 2^{2(\chi-1)} C^{\chi/q}$ 
and $C_0 = 2^{\frac{\chi(d+\alpha+1)}{q} -1+ \chi} > 1$. It will follow that 
$A_j \rightarrow 0$ as $j \rightarrow \infty$, provided that $A_0 \leq 
\tilde{C}^{-\frac{1}{\chi-1}} C_0^{-\frac{1}{(\chi-1)^2}} K$. Thus, we choose
\begin{align*}
K = 
\tilde{C}^\frac{1}{\chi-1} C_0^\frac{1}{(\chi-1)^2} A_0
+ r^\alpha \int_{\Rd \setminus B_{r/2}(x_0)} \frac{u_+(y)}{| y - x_0 |^{d+\alpha}} \dy,
\end{align*}
which is in accordance with \eqref{eq:K}. Note that since $(\chi-1)^{-1} \leq 1$ 
and $\chi/(\chi-1) = d/\alpha \leq d/\alpha_0$, we have
\begin{align*}
\tilde{C}^\frac{1}{\chi-1} C_0^\frac{1}{(\chi-1)^2} 
= 4C^\frac{\chi}{q(\chi-1)} 2^{\frac{\chi}{\chi-1} \frac{d+\alpha+1}{q} + \frac{1}{\chi-1}} 
\leq 4C^\frac{d}{q\alpha_0} 2^{\frac{d(d+3)}{q\alpha_0}+1}.
\end{align*}
Therefore, we conclude that
\begin{align*}
\sup_{B_{r/2}(x_0)} u 
\leq K \leq C \left( \fint_{B_r(x_0)} u_+^q(x) \dx \right)^{1/q} 
+ r^\alpha \int_{\Rd \setminus B_{r/2}(x_0)} 
\frac{u_+(y)}{| y - x_0 |^{d+\alpha}} \dy,
\end{align*}
where $C$ depends only on $d$, $q$, $\cl$, $\cK$ and $\alpha_0$.
\end{proof}

We are now in a position to prove the upper bound of Green functions 
using \Cref{theo:local boundedness} and the uniform estimates \eqref{eq:weak_Lp}. 

\begin{proof} [Proof of \Cref{thm:upper-bound}]
Let $G$ be the Green function of $L$ on $\Omega$, which is the limit of regularized Green functions $G_{\rho}$. Let $x_0, y_0 \in \Omega$ with $x_0 \neq y_0$ and assume $r := | x_0 - y_0 |/2 \geq \rho$. We first consider the case $B_r(x_0) \subset \Omega$. Since $\mathcal{E}(G_{\rho}(\cdot, y_0), \varphi) = 0$ for all $\varphi \in H_{B_{r}(x_{0})}(\Rd; k)$, \Cref{theo:local boundedness} shows that for any $q > 1$
\begin{align} \label{eq:I_1,I_2_Upper}
\sup_{B_{r/2}(x_0)} G_{\rho}(\cdot, y_0)
\leq C r^{-d/q} \|G_{\rho}(\cdot, y_0)\|_{L^q(B_r(x_0))} + r^\alpha \int_{\Rd \setminus B_{r/2}(x_0)} \frac{G_{\rho}(x, y_0)}{| x-x_0 |^{d+\alpha}} \dx
=: I_1 + I_2,
\end{align}
where $C$ depends only on $d$, $q$, $\cl$, $\cK$, and $\alpha_0$. Note that the essential supremum in the left-hand side of \eqref{eq:I_1,I_2_Upper} is
realized as the supremum by the interior regularity results \cite{KaWe22,DyKa20}. In particular, we have
\begin{align} \label{eq:sup}
G_{\rho}(x_0, y_0) \leq \sup_{B_{r/2}(x_0)} G_{\rho}(\cdot, y_0).
\end{align}
Let us choose $q = (1+d/(d-\alpha_0))/2 \in (1, d/(d-\alpha))$ 
so that the constant $C$ in \eqref{eq:I_1,I_2_Upper} depends on $d$, $\cl$, $\cK$, and $\alpha_0$.
Moreover, this choice of $q$ also makes the constant in the 
following estimates depend only on $d$, $\cl$, $\cK$, and $\alpha_0$: by the inequality \eqref{eq:weak_Lq_embedding} with $p=d/(d-\alpha)$ we have
\begin{align*}
I_1 \leq C \left( 1+ \frac{q}{p-q} \right)^{1/q} r^{\alpha-d} 
[G_{\rho}(\cdot, y_0)]_{L^{d/(d-\alpha)}_\mathrm{weak}(B_r(x_0))}.
\end{align*}
Using \eqref{eq:G-weak}, we obtain
\begin{align} \label{eq:I_1_Upper}
I_1 \leq C r^{\alpha-d},
\end{align}
where we used
\begin{align*}
p-q 
\geq \frac{d}{d-\alpha_0} - \frac{1}{2} \left( 1+\frac{d}{d-\alpha_0} \right) 
= \frac{1}{2} \frac{\alpha_0}{d-\alpha_0}.
\end{align*}

To estimate $I_2$, we split it into two integrals:
\begin{align}
\begin{split} \label{eq:I_2_Upper}
I_2 
&= r^\alpha \int_{B_{r/2}^c (x_0) \cap \lbrace G \leq r^{\alpha-d} \rbrace} 
\frac{G_{\rho}(x, y_0)}{| x-x_0 |^{d+\alpha}} \dx 
+ r^\alpha \int_{B_{r/2}^c (x_0) \cap \lbrace G_{\rho} > r^{\alpha-d} \rbrace} 
\frac{G_{\rho}(x, y_0)}{| x-x_0 |^{d+\alpha}} \dx \\
&=: I_{2,1} + I_{2,2}.
\end{split}
\end{align}
The term $I_{2,1}$ can be easily computed as
\begin{align} \label{eq:I_2,1_Upper}
I_{2,1} 
\leq r^{2\alpha-d} \int_{\Rd \setminus B_{r/2}(x_0)} \frac{\dx} {| x-x_0 |^{d+\alpha}} 
= \frac{|\mathbb{S}^{d-1}|}{\alpha} r^{2\alpha-d} \left( \frac{r}{2} \right)^{-\alpha} 
\leq \frac{4|\mathbb{S}^{d-1}|}{\alpha_0} r^{\alpha-d}.
\end{align}
For $I_{2,2}$ we have
\begin{align*}
I_{2,2} 
\leq 2^{d+\alpha} r^{-d} \int_{\lbrace G_{\rho} > r^{\alpha-d} \rbrace} G_{\rho}(x, y_0) \dx 
\leq 2^{d+2} r^{-d} \int_{\Rd} \chi_{\lbrace G_{\rho} > r^{\alpha-d} \rbrace} 
\int_0^\infty \chi_{\lbrace G_{\rho} > t \rbrace} \dt \dx.
\end{align*}
By utilizing the Fubini Theorem we obtain
\begin{align*}
I_{2,2} 
\leq Cr^{-d} \int_0^\infty \left| \lbrace G_{\rho}(\cdot, y_0) > (r^{\alpha-d} \lor t) \rbrace \right| \dt.
\end{align*}
We now make use of the estimate \eqref{eq:G-weak} and deduce
\begin{equation} \label{eq:I_2,2_Upper}
I_{2,2} \leq C r^{-d} [G_{\rho}(\cdot, y_0)]_{L^{d/(d-\alpha)}_\mathrm{weak}
(\Omega)}^{d/(d-\alpha)} \int_0^\infty (r^{\alpha-d} \lor t)^{-\frac{d}{d-\alpha}} \dt \leq C r^{\alpha-d},
\end{equation}
where $C = C(d, \cl, \alpha_0) > 0$. By combining \eqref{eq:I_1,I_2_Upper}--\eqref{eq:I_2,2_Upper}, we conclude that
\begin{align*}
G_{\rho}(x_0, y_0) \leq C |x_0-y_0|^{\alpha-d},
\end{align*}
where $C = C(d, \cl, \cK, \alpha_0) > 0$, from which \eqref{eq:upper-bound} follows by taking $\rho \to 0$.

Finally, let us consider the case $B_r(x_0) \not\subset \Omega$. In this case we 
consider a bounded open set $\tilde{\Omega} \supset \Omega$ such that $B_r(x_0) \subset \tilde{\Omega}$ and let $\tilde{G}$ be the Green function 
of $L$ on $\tilde{\Omega}$, which is defined as the limit of regularized Green functions $\tilde{G}_{\rho}$. Then 
\begin{align*}
\mathcal{E}\left(G_{\rho}(\cdot, y_0) - \tilde{G}_{\rho}(\cdot, y_0), \varphi \right) = 0 
\quad\text{for all} ~ \varphi \in H_{\Omega}(\Rd;k).
\end{align*}
Since $G_{\rho}(\cdot, y_0) = 0$ $\mathrm{a.e.}$ on $\Rd \setminus \Omega$, we have $G_{\rho}(\cdot, y_0) \leq \tilde{G}_{\rho}(\cdot, y_0)$ $\mathrm{a.e.}$ on $\Rd \setminus \Omega$. 
Thus, using $\varphi := (G_{\rho}(\cdot, y_0) - \tilde{G}_{\rho}(\cdot, y_0))_+ \in H_{\Omega}(\Rd; k)$ as a test function we obtain
\begin{align*}
0 
= \mathcal{E} \left( G_{\rho}(\cdot, y_0) - \tilde{G}_{\rho}(\cdot, y_0), \varphi \right) 
\geq \mathcal{E}(\varphi, \varphi) 
\geq \cl \mathcal{E}^{\alpha}(\varphi, \varphi) 
\geq 0.
\end{align*} 
Here we used the assumption \eqref{assum:E-lower}. Therefore, we have $\varphi = 0$ $\mathrm{a.e.}$ in 
$\Omega$, which in turn implies $G_{\rho}(\cdot, y_0) \leq \tilde{G}_{\rho}(\cdot, y_0)$ 
$\mathrm{a.e.}$ in $\Omega$. Since $G_{\rho}(\cdot, y_0) - \tilde{G}_{\rho}(\cdot, y_0)$ is H\"older 
continuous in $\Omega \setminus \lbrace y_0 \rbrace$, we have $G_{\rho}(\cdot, y_0) 
\leq \tilde{G}_{\rho}(\cdot, y_0)$ in $\Omega \setminus \lbrace y_0 \rbrace$. 
The upper bound of $G_{\rho}$ follows from the upper bound of $\tilde{G}_{\rho}$.
\end{proof}

\section{Pointwise lower bounds} \label{sec:lower-bounds}

The aim of this section is to prove the pointwise lower bounds for the Green function by modifying the classical proof for second order differential operators in \cite{GrWi82}. The main tool for this proof (\cite[Equation (1.9)]{GrWi82}) is the Harnack inequality together with a localization technique that cuts out the singularity of the Green function. We use similar cut-off functions for nonlocal operators, but the results are very different because the localization technique produces not only local quantities corresponding to those appearing in \cite{GrWi82} but also nonlocal quantities. Therefore, the following lemmas focus on estimating nonlocal quantities.

We begin with an estimate of a double integral of local-nonlocal nature.
This quantity can be made small by assuming the 
local region to be very small.

\begin{lemma} \label{lem:small_term}
Let $0 < \alpha_0 \leq \alpha < 2$, $\cl, \cU > 0$, and assume that $k$ satisfies \eqref{assum:E-lower} and \eqref{assum:U1}. Let $y_0 \in \Omega$. There exists a constant 
$\varepsilon < 1/2$, depending only on $d$, $\cl$, $\cU$, and $\alpha_0$, such that 
\begin{align*}
\int_{B_{\varepsilon r}(y_0)} \int_{\Rd \setminus B_r(y_0)} 
G_{\rho}(x, y_0) k(x,y) \dy \dx \leq \frac{1}{4}
\end{align*}
for all $r < \mathrm{dist}(y_0, \partial \Omega)$.
\end{lemma}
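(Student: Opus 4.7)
The plan is to Fubini-split the double integral and estimate the $y$-tail and the $x$-mass separately, using that the two regions $B_{\varepsilon r}(y_0)$ and $\Rd \setminus B_r(y_0)$ are well-separated whenever $\varepsilon < 1/2$.

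First, for the inner integral I will exploit the separation: for $x \in B_{\varepsilon r}(y_0)$ and $y \notin B_r(y_0)$ one has $|y-x| \ge (1-\varepsilon) r \ge r/2$, so $((1-\varepsilon) r)^{2} \land |y-x|^{2} = ((1-\varepsilon) r)^{2}$, and applying \eqref{assum:U1} at radius $(1-\varepsilon) r$ gives the robust tail bound
\begin{align*}
\sup_{x \in B_{\varepsilon r}(y_0)} \int_{\Rd \setminus B_r(y_0)} k(x,y) \dy \le \cU ((1-\varepsilon) r)^{-\alpha} \le 2^{\alpha} \cU r^{-\alpha}.
\end{align*}

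Next, I will control the $x$-mass $\int_{B_{\varepsilon r}(y_0)} G_{\rho}(x, y_{0}) \dx$ by combining the uniform weak-$L^{d/(d-\alpha)}$ bound for $G_{\rho}$ established as \eqref{eq:G-weak} inside the proof of \Cref{thm:existence} (whose constant depends only on $d,\cl,\alpha_{0}$) with the embedding \eqref{eq:weak_Lq_embedding}, applied on the bounded set $B_{\varepsilon r}(y_{0})$ with $q=1$ and $p = d/(d-\alpha)$. Since $p/(p-1) = d/\alpha \le d/\alpha_{0}$ and $|B_{\varepsilon r}|^{\alpha/d} = c_{d}(\varepsilon r)^{\alpha}$, this produces
\begin{align*}
\int_{B_{\varepsilon r}(y_{0})} G_{\rho}(x, y_{0}) \dx \le C_{1} (\varepsilon r)^{\alpha},
\end{align*}
with $C_{1}$ depending only on $d$, $\cl$, $\alpha_{0}$.

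Multiplying the two estimates cancels $r^{\alpha}$ and yields a bound of the form $C_{2} \varepsilon^{\alpha}$, where $C_{2}=C_{2}(d,\cl,\cU,\alpha_{0})$ (the factor $2^{\alpha}$ is harmless since $\alpha < 2$). To finish I will use $\varepsilon^{\alpha} \le \varepsilon^{\alpha_{0}}$ for $\varepsilon \in (0,1)$ and $\alpha \ge \alpha_{0}$, and choose $\varepsilon := \min\{1/2, (1/(4C_{2}))^{1/\alpha_{0}}\}$, which depends only on $d,\cl,\cU,\alpha_{0}$. There is no real obstacle here; the only thing to monitor is that no constant degenerates as $\alpha \to 2-$, but this is automatic because both \eqref{assum:U1} and \eqref{eq:G-weak} are already formulated in a robust way.
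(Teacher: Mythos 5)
Your proposal is correct and follows essentially the same route as the paper: the separation $|y-x|\ge(1-\varepsilon)r$ plus \eqref{assum:U1} for the tail in $y$, the weak-$L^{d/(d-\alpha)}$ bound \eqref{eq:G-weak} combined with \eqref{eq:weak_Lq_embedding} (with $q=1$, $p=d/(d-\alpha)$) for the $L^1$-mass on $B_{\varepsilon r}(y_0)$, and then $\varepsilon^\alpha\le\varepsilon^{\alpha_0}$ to choose $\varepsilon$ robustly. The only cosmetic differences are bounding $(1-\varepsilon)^{-\alpha}\le 2^\alpha$ up front and the choice $\varepsilon=\min\{1/2,(4C_2)^{-1/\alpha_0}\}$, which should be taken strictly below $1/2$ to match the statement, a trivial adjustment.
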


\begin{proof}
Let us denote $B_r = B_r(y_0)$.
If $x \in B_{\varepsilon r}$ and $y \in \Rd \setminus B_r$, then 
$|y-x| \geq | y-y_0 | - | x - y_0 | \geq r (1 - \varepsilon)$.
Thus, using the assumption \eqref{assum:U1} we obtain
\begin{align} \label{eq:I}
\begin{split}
\int_{B_{\varepsilon r}} \int_{\Rd \setminus B_r} G_{\rho}(x,y_0) k(x,y) \dy \dx 
&\leq \int_{B_{\varepsilon r}} G_{\rho}(x, y_0) \int_{\Rd \setminus B(x, (1-\varepsilon)r)} k(x,y) \dy \dx \\
&\leq \frac{\cU}{(1- \varepsilon)^{\alpha} r^{\alpha}} \| G_{\rho}(\cdot, y_0) \|_{L^1 (B_{\varepsilon r})}.
\end{split}
\end{align}
We utilize the inequality \eqref{eq:weak_Lq_embedding} 
with $q = 1$ and $p = d/(d-\alpha)$ and the estimate \eqref{eq:G-weak} to get
\begin{align} \label{eq:L1_norm}
\|G_{\rho}(\cdot, y_0)\|_{L^1(B_{\varepsilon r})} 
\leq \frac{d}{\alpha} | B_{\varepsilon r} |^{\alpha/d} 
[G_{\rho}(\cdot, y_0)]_{L^{d/(d-\alpha)}_\mathrm{weak}(B_{\varepsilon r})}
\leq \frac{d}{\alpha_0} |B_1| (\varepsilon r)^\alpha.
\end{align}
Combining \eqref{eq:I} and \eqref{eq:L1_norm}, we have
\begin{align*}
\int_{B_{\varepsilon r}} \int_{\Rd \setminus B_r} G_{\rho}(x, y_0) k(x,y) \dy \dx 
\leq C \left( \frac{\varepsilon}{1-\varepsilon} \right)^\alpha,
\end{align*}
where $C$ depends only on $d$, $\cl$, $\cU$, and $\alpha_0$. 
Note that the assumption \eqref{assum:E-lower} was used in the estimate \eqref{eq:weak_Lp}. We take
\begin{align*}
\varepsilon < \frac{1}{2} \min \lbrace 1, (4C)^{-1/\alpha_0} \rbrace
\end{align*}
so that $C(\varepsilon / (1-\varepsilon))^\alpha
\leq C (2\varepsilon)^{\alpha_0} < 1/4$, which finishes the proof.
\end{proof}

The next lemma shows how the integral over a global region can be controlled by 
a local quantity. The method used in the following lemma is inspired by 
\cite[Lemma 4.2]{DCKP14}. The difference is that we use a cut-off function 
whose support is in an annulus near the singularity of Green functions. 
More precisely, we use a cut-off function $\eta : \Rd \rightarrow \R$ satisfying 
\begin{align} \label{eq:cut-off_function}
\eta \in [0,1], \quad 
\eta = 1~\text{in}~ A^r_{\varepsilon r}, \quad 
\eta = 0 ~\text{in}~ \Rd \setminus A^{3r/2}_{\varepsilon r/2}, \quad\text{and}\quad 
| \nabla \eta | \leq \frac{4}{\varepsilon} r^{-1},
\end{align}
where $A^R_r$ denotes an annulus $B_R(y_0) \setminus B_r(y_0)$ and $\varepsilon$ is the constant from \Cref{lem:small_term}.

\begin{lemma} \label{lem:global_term}
Let $0 < \alpha_0 \leq \alpha < 2$, $\cl, \cU > 0$, and assume that $k$ satisfies \eqref{assum:E-lower} and \eqref{assum:U1}. Let $y_0 \in \Omega$ and let $\eta$ be the cut-off function satisfying \eqref{eq:cut-off_function}. There exists a constant $C$, depending only on $d$, $\cl$, $\cU$, and $\alpha_0$, such that
\begin{align} \label{eq:global_term_eta}
\int_{\Rd \setminus A^{3r/2}_{\varepsilon r/2}(y_0)}
\int_{A^{3r/2}_{\varepsilon r/2}(y_0)}G_{\rho}(x, y_0) \eta^2(y) k(x,y) \dy \dx
\leq C r^{d-\alpha} \sup_{A^{3r/2}_{\varepsilon r/2}(y_0)} G_{\rho}(\cdot, y_0)
\end{align}
for all $r < \mathrm{dist}(y_0, \partial \Omega)/2$ and $\rho < \varepsilon r/2$. In particular, 
\begin{align} \label{eq:global_term}
\int_{\Rd \setminus A^{3r/2}_{\varepsilon r/2}(y_0)}
\int_{A^r_{\varepsilon r}(y_0)}G_{\rho}(x, y_0) k(x,y) \dy \dx
\leq C r^{d-\alpha} \sup_{A^{3r/2}_{\varepsilon r/2}(y_0)} G_{\rho}(\cdot, y_0).
\end{align}
\end{lemma}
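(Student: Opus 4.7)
The plan is to test the equation for $G_\rho(\cdot,y_0)$ twice, once with $\eta^2$ and once with $\eta^2 G_\rho$, and combine the resulting identities. Write $S := A^{3r/2}_{\varepsilon r/2}(y_0)$, $M := \sup_S G_\rho(\cdot,y_0)$, and let $I$ denote the left-hand side of \eqref{eq:global_term_eta}. Because $r<\operatorname{dist}(y_0,\partial\Omega)/2$ we have $\overline{S}\Subset\Omega$, and $M<\infty$ by \Cref{theo:local boundedness} applied inside $S$. Since $\eta^2$ is smooth and compactly supported in $S$, it lies in $H_{\Omega}(\Rd;k)$; as $\eta$ vanishes on $B_{\varepsilon r/2}(y_0)\supset B_\rho(y_0)$, we get $\mathcal{E}(G_\rho(\cdot,y_0),\eta^2)=0$. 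Splitting $\Rd\times\Rd$ into the four regions determined by $S$ and $S^c$, discarding the $S^c\times S^c$ contribution (where $\eta\equiv 0$), and merging the two cross pieces by the symmetry of $k$, one derives the identity
\[
I = P_1 + \tfrac{1}{2} A, \qquad
P_1 := \int_S \eta^2(x) G_\rho(x,y_0) \int_{S^c} k(x,y)\dy\dx,
\]
\[
A := \int_S \int_S (G_\rho(y,y_0)-G_\rho(x,y_0))(\eta^2(y)-\eta^2(x))k(x,y)\dy\dx.
\]

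The bound $P_1\le CMr^{d-\alpha}$ is elementary. Combining $\eta(x)\le (4/(\varepsilon r))\operatorname{dist}(x,S^c)$ with $\int_{|y-x|\ge R}k(x,y)\dy\le \cU R^{-\alpha}$ (a consequence of \eqref{assum:U1}) at the scale $R=\operatorname{dist}(x,S^c)\le 3r/2$ gives $\eta^2(x)\int_{S^c}k(x,y)\dy\le Cr^{-\alpha}$ uniformly in $x\in S$; integrating over $S$ and using $G_\rho\le M$ there yields the claim.

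The control of $A$ is the main obstacle, since the naive bound $|\eta^2(y)-\eta^2(x)|\le 2|\eta(y)-\eta(x)|$ forces one to integrate $|\eta(y)-\eta(x)|k$, which diverges near the diagonal once $\alpha\ge 1$. To preserve cancellation I use the pointwise identity
\[
(F(y)-F(x))(\phi^2(y)-\phi^2(x)) = 2\bigl((\phi F)(y)-(\phi F)(x)\bigr)(\phi(y)-\phi(x)) - (F(x)+F(y))(\phi(y)-\phi(x))^2
\]
with $F=G_\rho(\cdot,y_0)$ and $\phi=\eta$, which factors the singular part out as $(\eta(y)-\eta(x))^2$. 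A direct computation using $|\nabla\eta|\le 4/(\varepsilon r)$, $|S|\le Cr^d$, and \eqref{assum:U1} at scale $r$ gives $\iint_{S\times S}(\eta(y)-\eta(x))^2 k\dy\dx\le Cr^{d-\alpha}$. Therefore the second term in the decomposition contributes at most $2M\cdot Cr^{d-\alpha}$, while Young's inequality $|2ab|\le a^2/(4M)+4Mb^2$ applied to the first term bounds its contribution by
\[
\frac{1}{4M}\iint_{\Rd\times\Rd}\bigl((\eta G_\rho)(y)-(\eta G_\rho)(x)\bigr)^2 k(x,y)\dy\dx + CMr^{d-\alpha}.
\]

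To treat the remaining energy integral I test a second time with $\varphi = \eta^2 G_\rho(\cdot,y_0)\in H_{\Omega}(\Rd;k)$, which is admissible since $\eta^2 G_\rho$ is supported in $S$ and yields $\mathcal{E}(G_\rho,\eta^2 G_\rho)=0$. The pointwise identity $(u(y)-u(x))(\eta^2(y)u(y)-\eta^2(x)u(x))=((\eta u)(y)-(\eta u)(x))^2-u(x)u(y)(\eta(y)-\eta(x))^2$ applied to $u=G_\rho$ and integrated against $k$ gives the Caccioppoli-type equality
\[
\iint_{\Rd\times\Rd}\bigl((\eta G_\rho)(y)-(\eta G_\rho)(x)\bigr)^2 k\dy\dx = \iint_{\Rd\times\Rd}G_\rho(x,y_0)G_\rho(y,y_0)(\eta(y)-\eta(x))^2 k\dy\dx.
\]
Splitting the right-hand side by $S$, bounding $G_\rho\le M$ on $S$ for the diagonal block $S\times S$, and using $\eta=0$ off $S$ on the two cross blocks (each of which is bounded by $MI$ by the definition of $I$) produces an upper bound $CM^2 r^{d-\alpha}+2MI$. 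Substituting this back into Young's inequality gives $|A|\le CMr^{d-\alpha}+\tfrac12 I$, and feeding this into $I=P_1+\tfrac12 A$ yields $I\le CMr^{d-\alpha}+\tfrac14 I$, from which $I\le CMr^{d-\alpha}$ follows by absorption. The second estimate \eqref{eq:global_term} is then immediate, since $\eta^2\ge\chi_{A^r_{\varepsilon r}(y_0)}$ and the integrand is nonnegative.
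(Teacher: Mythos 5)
Your argument is correct, but it takes a genuinely different route from the paper. The paper proves \eqref{eq:global_term_eta} with a single test function, $\varphi=(G_\rho-2k)\eta^2$ with $k=\sup_{A^{3r/2}_{\varepsilon r/2}}G_\rho$: the constant shift $-2k$ makes the off-annulus contribution produce exactly the term $2k\int_{\Rd\setminus A}\int_A G_\rho(x)\eta^2(y)k(x,y)\,\mathrm{d}y\,\mathrm{d}x$ with a favorable sign, and everything else is absorbed into $Ck^2r^{d-\alpha}$ via the cutoff estimate (\Cref{lem:cut-off_function}); no bootstrap is needed. You instead test twice (with $\eta^2$ and with $\eta^2G_\rho$), derive the exact identity $I=P_1+\tfrac12 A$, control $\mathcal{E}(\eta G_\rho,\eta G_\rho)$ by a Caccioppoli-type equality, and close by Young's inequality and absorption; in effect you front-load the energy bound that the paper postpones to \Cref{lem:local_term} (whose proof uses $\varphi=G_\rho\eta^2$, very close to your second test) and pay for it with a self-improvement step. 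Both arguments rest on the same ingredients, \eqref{assum:U1} through the cutoff tail estimate and the symmetry of $k$; the paper's version is shorter and avoids circularity concerns, while yours is more systematic and would adapt to situations where no clever shift is apparent. Two small repairs are needed in your write-up. First, you justify $M<\infty$ by \Cref{theo:local boundedness}, which is not available here: that theorem assumes \eqref{assum:U2}, whereas the lemma only assumes \eqref{assum:E-lower} and \eqref{assum:U1}; argue instead, as the paper does, that $G_\rho$ is (H\"older) continuous in $\Omega$ and $\overline{A^{3r/2}_{\varepsilon r/2}}\subset B_{2r}(y_0)\subset\Omega$, or simply note that the claim is vacuous if $M=\infty$ (and that $M=0$ forces $P_1=A=0$, hence $I=0$, so Young's inequality with the factor $1/(4M)$ is only used when $0<M<\infty$). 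Second, the absorption $I\le CMr^{d-\alpha}+\tfrac14 I$ requires knowing $I<\infty$ a priori; this does hold for each fixed $\rho$, since $\eta^2(y)\le C\varepsilon^{-2}r^{-2}(r^2\wedge|y-x|^2)$ for $x\notin A^{3r/2}_{\varepsilon r/2}$ gives $\int_A\eta^2(y)k(x,y)\,\mathrm{d}y\le C\cU r^{-\alpha}$ uniformly in $x$, whence $I\le Cr^{-\alpha}\|G_\rho\|_{L^1(\Omega)}<\infty$, but it should be stated before absorbing.
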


We will use the following estimate in the proof of \Cref{lem:global_term}.

\begin{lemma} \label{lem:cut-off_function}
Under the same setting as in \Cref{lem:global_term},
\begin{align} \label{eq:cut-off}
\int_{B_{3r/2}(y_0)} \int_{\Rd} |\eta(y) - \eta(x)|^2 k(x,y) \dy \dx \leq C r^{d-\alpha}
\end{align}
for some $C$ depending only on $d$, $\cl$, $\cU$, and $\alpha_0$.
\end{lemma}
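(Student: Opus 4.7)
The plan is to interpolate the two elementary pointwise bounds on $\eta$, namely $|\eta(y)-\eta(x)|\leq 1$ (since $\eta\in[0,1]$) and the Lipschitz estimate $|\eta(y)-\eta(x)|\leq (4/(\varepsilon r))|y-x|$, into a single quadratic bound of the form
$$|\eta(y)-\eta(x)|^2\leq \frac{C}{r^2}\bigl(r^2\wedge|y-x|^2\bigr)\qquad\text{for all } x,y\in\Rd,$$
where $C$ depends only on $\varepsilon$. This expression is precisely of the form appearing inside the truncation in \eqref{assum:U1}, which is what makes the approach work.

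Once this pointwise inequality is in hand, I would insert it into the left-hand side of \eqref{eq:cut-off}, pull the factor $1/r^2$ out and apply Fubini to write
$$\int_{B_{3r/2}(y_0)}\int_{\Rd}|\eta(y)-\eta(x)|^2 k(x,y)\dy\dx\leq \frac{C}{r^2}\int_{B_{3r/2}(y_0)}\left(\int_{\Rd}\bigl(r^2\wedge|y-x|^2\bigr)k(x,y)\dy\right)\dx.$$
Assumption \eqref{assum:U1} bounds the inner integral by $\cU r^{2-\alpha}$ uniformly in $x\in\Rd$, and integrating the resulting constant over $B_{3r/2}(y_0)$ delivers the claimed bound $Cr^{d-\alpha}$. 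The dependence of the final constant on $d,\cl,\cU,\alpha_0$ enters only through $\varepsilon$, whose dependence on these parameters was already fixed in \Cref{lem:small_term}.

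I do not anticipate any serious obstacle: the result is a standard cut-off energy estimate, and the L\'evy-type assumption \eqref{assum:U1} is tailor-made to absorb the truncated quadratic $r^2\wedge|y-x|^2$. The only delicate point is robustness as $\alpha\to 2-$, but this is automatic since the right-hand side of \eqref{assum:U1} carries the factor $r^{2-\alpha}$ with $\cU$ independent of $\alpha$, and the volume $|B_{3r/2}(y_0)|$ contributes the remaining $r^d$ factor that combines with $r^{-2}\cdot r^{2-\alpha}$ to produce exactly $r^{d-\alpha}$.
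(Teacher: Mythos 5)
Your proof is correct and follows essentially the same route as the paper: the paper likewise bounds $|\eta(y)-\eta(x)|^2$ pointwise by $1 \land \frac{16}{\varepsilon^2 r^2}|y-x|^2$ (equivalent to your truncated quadratic $\frac{C}{r^2}(r^2\land|y-x|^2)$), applies \eqref{assum:U1} to the inner integral uniformly in $x$, and integrates over $B_{3r/2}(y_0)$ to get $Cr^{d-\alpha}$.
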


\begin{proof}
It follows from \eqref{assum:U1} that
\begin{equation*}
\int_{\Rd} | \eta(y) - \eta(x) |^2 k(x,y) \dy \leq \int_{\Rd} \left( 1 \land \frac{16}{\varepsilon^2 r^2} |y-x|^2 \right) k(x,y) \dy \leq Cr^{-\alpha},
\end{equation*}
where $C$ depends on $\varepsilon$ and $\cU$. Therefore, the estimate \eqref{eq:cut-off} holds with a constant $C$ depending on $d$, $\cl$, $\cU$, and $\alpha_0$.
\end{proof}

We are now in a position to prove \Cref{lem:global_term} using \Cref{lem:cut-off_function}.

\begin{proof} [Proof of \Cref{lem:global_term}]
Let us write $G_{\rho} = G_{\rho}(\cdot, y_0)$, $A^R_r = B_R(y_0) \setminus B_r(y_0)$, and set
\begin{align*}
k = \sup_{A^{3r/2}_{\varepsilon r/2}} G_{\rho}.
\end{align*}
Since $G_{\rho}$ is continuous in $\Omega$, we have $k < \infty$. We put $\varphi = (G_{\rho} - 2k) \eta^2 \in H_{\Omega}(\Rd; k)$ into \eqref{eq:regularized-G} to obtain
\begin{align} \label{eq:I_1,I_2_global}
\begin{split}
0 =&~ \int_{A^{3r/2}_{\varepsilon r/2}} \int_{A^{3r/2}_{\varepsilon r/2}}
(G_{\rho}(y) - G_{\rho}(x))(\varphi(y) - \varphi(x)) k(x,y) \dy \dx \\
&+ 2\int_{\Rd \setminus A^{3r/2}_{\varepsilon r/2}} 
\int_{A^{3r/2}_{\varepsilon r/2}} (G_{\rho}(y) - G_{\rho}(x))\varphi(y) k(x,y) \dy \dx 
=: I_1+ I_2.
\end{split}
\end{align}
Let $w = G_{\rho} - 2k$. For $x,y \in A^{3r/2}_{\varepsilon r/2}$ 
with $\eta(y) \geq \eta(x)$, we have
\begin{align*}
(G_{\rho}(y) - G_{\rho}(x))(\varphi(y) - \varphi(x)) 
&= (w(y) - w(x))^2 \eta^2(y) + (w(y) - w(x)) w(x) (\eta^2(y) - \eta^2(x)) \\
&\geq (w(y) - w(x))^2 \eta^2(y) - 2 | w(y) - w(x) |
| w(x) | \eta(y) | \eta(y) - \eta(x) | \\
&\geq - | w(x) |^2 | \eta(y) - \eta(x) |^2 \geq - 4k^2 | \eta(y) - \eta(x) |^2.
\end{align*}
Note that the resulting inequality remains true when $\eta(x) \leq \eta(y)$. This inequality and \Cref{lem:cut-off_function} yield that
\begin{align} \label{eq:I_1_global}
I_1 
\geq - 4k^2 \int_{A^{3r/2}_{\varepsilon r/2}} \int_{A^{3r/2}_{\varepsilon r/2}}
| \eta(y) - \eta(x) |^2 k(x,y) \dy \dx 
\geq -Ck^2 r^{d-\alpha},
\end{align}
where $C$ depends only on $d$, $\cl$, $\cU$, and $\alpha_0$.

For $I_2$, we split the integral into two parts as
\begin{align*}
I_2
=&~ 2\int_{\Rd \setminus A^{3r/2}_{\varepsilon r/2}} \int_{A^{3r/2}_{\varepsilon r/2}}
(G_{\rho}(y) - G_{\rho}(x))(G_{\rho}(y)-2k) \chi_{\lbrace G_{\rho}(x) \geq k \rbrace} \eta^2(y) k(x,y) \dy \dx \\
&+ 2\int_{\Rd \setminus A^{3r/2}_{\varepsilon r/2}} \int_{A^{3r/2}_{\varepsilon r/2}}
(G_{\rho}(y) - G_{\rho}(x))(G_{\rho}(y)-2k) \chi_{\lbrace G_{\rho}(x) < k \rbrace} \eta^2(y) k(x,y) \dy \dx 
=: I_{2,1} + I_{2,2}.
\end{align*}
Since $G_{\rho}(y) \leq k$ in $A^{3r/2}_{\varepsilon r/2}$, we have
\begin{align} \label{eq:integrand1}
(G_{\rho}(y) - G_{\rho}(x))(G_{\rho}(y)-2k) \chi_{\lbrace G_{\rho}(x) \geq k \rbrace} 
&= (G_{\rho}(x) - G_{\rho}(y))(2k - G_{\rho}(y)) \chi_{\lbrace G_{\rho}(x) \geq k \rbrace} \\
&\geq (G_{\rho}(x) - k)k
\end{align}
and
\begin{align} \label{eq:integrand2}
\begin{split}
(G_{\rho}(y) - G_{\rho}(x))(G_{\rho}(y)-2k) \chi_{\lbrace G_{\rho}(x) < k \rbrace}
&= -(G_{\rho}(y) - G_{\rho}(x)) (2k - G_{\rho}(y)) \chi_{\lbrace G_{\rho}(x) < k \rbrace} \\
&\geq -2k(G_{\rho}(y) - G_{\rho}(x))_+ \chi_{\lbrace G_{\rho}(x) < k \rbrace} \geq -2k^2.
\end{split}
\end{align}
Using \eqref{eq:integrand1} and \eqref{eq:integrand2}, we obtain
\begin{align} \label{eq:I_2,1_global}
I_{2,1} 
\geq 2k\int_{\Rd \setminus A^{3r/2}_{\varepsilon r/2}} 
\int_{A^{3r/2}_{\varepsilon r/2}} G_{\rho}(x) \eta^2(y) k(x,y) \dy \dx 
- 2k^2 \int_{\Rd \setminus A^{3r/2}_{\varepsilon r/2}} 
\int_{A^{3r/2}_{\varepsilon r/2}} \eta^2(y) k(x,y) \dy \dx
\end{align}
and
\begin{align} \label{eq:I_2,2_global}
I_{2,2} \geq -4k^2 \int_{\Rd \setminus A^{3r/2}_{\varepsilon r/2}} 
\int_{A^{3r/2}_{\varepsilon r/2}} \eta^2(y) k(x,y) \dy \dx,
\end{align}
respectively. 
We combine the estimates \eqref{eq:I_2,1_global} and \eqref{eq:I_2,2_global}, 
and then use \Cref{lem:cut-off_function} to estimate
\begin{align} \label{eq:I_2_global}
I_2
\geq 2k\int_{\Rd \setminus A^{3r/2}_{\varepsilon r/2}} 
\int_{A^{3r/2}_{\varepsilon r/2}} G_{\rho}(x) \eta^2(y) k(x,y) \dy \dx 
- Ck^2 r^{d-\alpha}.
\end{align}
The inequality \eqref{eq:global_term_eta} is established by combining
\eqref{eq:I_1,I_2_global}, \eqref{eq:I_1_global}, and \eqref{eq:I_2_global}. 
The second assertion \eqref{eq:global_term} follows immediately from 
\eqref{eq:cut-off_function}.
\end{proof}

The next lemma corresponds to the estimate of $L^2$-norm of the gradient of 
Green function in the case of second order differential operators. Global 
terms arising from the weak formulation of Green function can be 
controlled by using \Cref{lem:global_term}.

\begin{lemma} \label{lem:local_term}
Let $0 < \alpha_0 \leq \alpha < 2$, $\cl, \cU > 0$, and assume that $k$ satisfies \eqref{assum:E-lower} and \eqref{assum:U1}. Let $y_0 \in \Omega$ and let $\varepsilon$ be the constant in \Cref{lem:small_term}. There exists a constant $C$, depending only on $d$, $\cl$, $\cU$, and $\alpha_0$, such that
\begin{align*}
\int_{A^{3r/2}_{\varepsilon r/2}(y_0)} \int_{A^r_{\varepsilon r}(y_0)}
(G_{\rho}(y, y_0) - G_{\rho}(x, y_0))^2 k(x,y) \dy \dx
\leq C r^{d-\alpha} \sup_{A^{3r/2}_{\varepsilon r/2}(y_0)} G_{\rho}^{2}(\cdot, y_0).
\end{align*}
for all $r < \mathrm{dist}(y_0, \partial \Omega)/2$ and $\rho < \varepsilon r/2$.
\end{lemma}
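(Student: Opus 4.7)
The plan is to emulate a Caccioppoli-type argument with a cut-off that avoids the singularity at $y_0$. Let $\eta$ be the cut-off from \eqref{eq:cut-off_function}, write $A = A^{3r/2}_{\varepsilon r/2}(y_0)$ and $A^c = \mathbb{R}^d \setminus A$, and set $M := \sup_A G_\rho(\cdot,y_0)$, which is finite by interior H\"older regularity. I would test \eqref{eq:regularized-G} against $\varphi = G_\rho(\cdot,y_0)\eta^2 \in H_\Omega(\mathbb{R}^d;k)$. Since $\rho < \varepsilon r/2$, the ball $B_\rho(y_0)$ is disjoint from $\operatorname{supp}\eta$, so the right-hand side vanishes and $\mathcal{E}(G_\rho,G_\rho\eta^2) = 0$.

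Next, decompose the double integral according to which of $x,y$ lies in $A$. The contribution from $A^c\times A^c$ vanishes since $\varphi \equiv 0$ there. On $A\times A$, invoke \Cref{lem:inequality4} (with $a = G_\rho(x)$, $b = G_\rho(y)$ and $x,y$ replaced by $\eta(x),\eta(y)$) to bound the integrand from below by
$$\tfrac{1}{4}(G_\rho(y)-G_\rho(x))^2(\eta^2(y)+\eta^2(x)) - 4(G_\rho^2(y)+G_\rho^2(x))(\eta(y)-\eta(x))^2.$$
Using $G_\rho \leq M$ on $A$, the error term, after integration, is controlled by $CM^2\int_{B_{3r/2}}\int_{\mathbb{R}^d}(\eta(y)-\eta(x))^2 k(x,y)\,dy\,dx \leq CM^2 r^{d-\alpha}$ via \Cref{lem:cut-off_function}. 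For the two mixed pieces, symmetry of the integrand reduces them to $2\int_A\int_{A^c}(G_\rho(y)-G_\rho(x))(-G_\rho(x)\eta^2(x))k(x,y)\,dy\,dx$; expanding, dropping the nonnegative summand $G_\rho^2(x)\eta^2(x)$, and using $G_\rho(x)\leq M$ on $\operatorname{supp}\eta$, this is bounded below by $-2M\int_A\int_{A^c} G_\rho(y)\eta^2(x)k(x,y)\,dy\,dx$. By Fubini and the symmetry of $k$, this integral matches the left-hand side of \eqref{eq:global_term_eta}, and \Cref{lem:global_term} delivers a lower bound $-CM^2 r^{d-\alpha}$.

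Assembling the three pieces yields
$$0 = \mathcal{E}(G_\rho,G_\rho\eta^2) \geq \tfrac{1}{4}\int_A\int_A (G_\rho(y)-G_\rho(x))^2(\eta^2(y)+\eta^2(x))k(x,y)\,dy\,dx - CM^2 r^{d-\alpha}.$$
Restricting the inner integration to $y\in A^r_{\varepsilon r}$, where $\eta(y)=1$ and hence $\eta^2(y)+\eta^2(x)\geq 1$, then produces the claim. The main obstacle I anticipate is the handling of the mixed local-nonlocal term: the potentially negative contribution from $-G_\rho(x)G_\rho(y)\eta^2(x)$ must be absorbed, and this is exactly why \Cref{lem:global_term} was stated in the form \eqref{eq:global_term_eta} with $\eta^2$ rather than a bare characteristic function, together with the symmetry of $k$ to match the roles of the two integration variables.
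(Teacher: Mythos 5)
Your proposal is correct and follows essentially the same route as the paper's proof: test \eqref{eq:regularized-G} with $G_\rho\eta^2$ (the averaged right-hand side vanishing since $\rho<\varepsilon r/2$), split according to whether the variables lie in $A^{3r/2}_{\varepsilon r/2}$, handle the local part with \Cref{lem:inequality4} and \Cref{lem:cut-off_function}, and absorb the mixed local--nonlocal part via \eqref{eq:global_term_eta} after bounding $G_\rho$ by its supremum on the support of $\eta$. The only differences are cosmetic (a relabeling of the variables in the mixed term via symmetry of $k$), so no further comment is needed.
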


\begin{proof}
As in the previous proof, let us write $G_{\rho} = G_{\rho}(\cdot, y_0)$, $A^R_r = B_R(y_0) \setminus B_r(y_0)$.
Let $\eta : \Rd \rightarrow \R$ be a cut-off function satisfying 
\eqref{eq:cut-off_function} and define $\varphi = G_{\rho}\eta^2 \in H_{\Omega}(\Rd; k)$. Then, we have from \eqref{eq:regularized-G}
\begin{align} \label{eq:I_1,I_2_local}
\begin{split}
0 
= \mathcal{E}(G_{\rho}, \varphi) 
=&~ \int_{A^{3r/2}_{\varepsilon r/2}} \int_{A^{3r/2}_{\varepsilon r/2}} 
(G_{\rho}(y) - G_{\rho}(x))(\varphi(y) - \varphi(x)) k(x,y) \dy \dx \\
&+ 2 \int_{\Rd \setminus A^{3r/2}_{\varepsilon r/2}} 
\int_{A^{3r/2}_{\varepsilon r/2}} (G_{\rho}(y) - G_{\rho}(x)) \varphi(y) k(x,y) \dy \dx 
=: I_1 + I_2.
\end{split}
\end{align}
We utilize \Cref{lem:inequality4} and \Cref{lem:cut-off_function} to estimate $I_1$ as
\begin{align} \label{eq:I_1_local}
\begin{split}
I_1 
\geq&~ \frac{1}{4} \int_{A^{3r/2}_{\varepsilon r/2}} \int_{A^{3r/2}_{\varepsilon r/2}} 
(G_{\rho}(y) - G_{\rho}(x))^2 (\eta^2(y) + \eta^2(x)) k(x,y) \dy \dx \\
&- 4 \int_{A^{3r/2}_{\varepsilon r/2}} \int_{A^{3r/2}_{\varepsilon r/2}} 
(G_{\rho}^{2}(y) + G_{\rho}^{2}(x)) | \eta(y) - \eta(x) |^2 k(x,y) \dy \dx \\
\geq&~ \frac{1}{4} \int_{A^{3r/2}_{\varepsilon r/2}} \int_{A^r_{\varepsilon r}} 
(G_{\rho}(y) - G_{\rho}(x))^2 k(x,y) \dy \dx - C r^{d-\alpha} \sup_{A^{3r/2}_{\varepsilon r/2}} G_{\rho}^{2}.
\end{split}
\end{align}
For $I_2$, we use \eqref{eq:global_term_eta} to obtain
\begin{align} \label{eq:I_2_local}
\begin{split}
I_2 
&\geq -2 \int_{\Rd \setminus A^{3r/2}_{\varepsilon r/2}} 
\int_{A^{3r/2}_{\varepsilon r/2}} G_{\rho}(x) G_{\rho}(y) \eta^2(y) k(x,y) \dy \dx \\
&\geq -2 \bigg( \sup_{A^{3r/2}_{\varepsilon r/2}} G_{\rho} \bigg) 
\int_{\Rd \setminus A^{3r/2}_{\varepsilon r/2}} \int_{A^{3r/2}_{\varepsilon r/2}} 
G_{\rho}(x) \eta^2(y) k(x,y) \dy \dx \\
&\geq -C r^{d-\alpha} \sup_{A^{3r/2}_{\varepsilon r/2}} G_{\rho}^{2}.
\end{split}
\end{align}
The proof is finished by combining \eqref{eq:I_1,I_2_local}--\eqref{eq:I_2_local}.
\end{proof}

The Harnack inequality \eqref{assum:H} for Green function on the annulus $B_{2r}(y) \setminus B_r(y)$ implies the same inequalities on larger annuli by standard covering argument.

\begin{lemma} \label{lem:Harnack}
Condition \eqref{assum:H} implies the following condition: For every $M >2$ there is $c=c(M, \cH) >0$ such that for every ball $B_{Mr}(y) \Subset \Omega$ 
\begin{align*}
\sup_{B_{Mr}(y) \setminus B_r(y)} G(\cdot, y) 
\leq c \inf_{B_{Mr}(y) \setminus B_r(y)} G(\cdot, y).
\end{align*}
\end{lemma}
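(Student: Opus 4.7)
The strategy is the classical chain-of-annuli covering argument. Given $M > 2$ and $y \in \Omega$ with $B_{Mr}(y) \Subset \Omega$, I would cover the annulus $B_{Mr}(y) \setminus B_r(y)$ by finitely many overlapping annular shells on each of which \eqref{assum:H} applies directly, and then concatenate the resulting pointwise inequalities through a telescoping chain.

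Concretely, I would set $N = \lceil \log_2 M \rceil$ and interpolating radii
\[
r_0 = r, \qquad r_i = (M/2)^{i/(N-1)} r \quad (0 \le i \le N-1), \qquad r_N = Mr,
\]
so that consecutive ratios satisfy $r_{i+1}/r_i \le 2$ for every $i$ (the inner ratios equal $(M/2)^{1/(N-1)} \le 2$ because $N - 1 \ge \log_2(M/2)$, and the last ratio is exactly $2$). Defining closed annuli $A_i = \overline{B_{r_{i+1}}(y)} \setminus B_{r_i}(y)$, one has $A_i \subset B_{2 r_i}(y) \setminus B_{r_i}(y)$, and since $2 r_i \le 2 r_{N-1} = Mr$, the doubling ball $B_{2r_i}(y)$ is relatively compactly contained in $\Omega$. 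Hence \eqref{assum:H} yields $\sup_{A_i} G(\cdot,y) \le \cH \inf_{A_i} G(\cdot,y)$ for each $i = 0,1,\dots,N-1$.

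The next step is the chaining. Since $A_i$ and $A_{i+1}$ share the sphere of radius $r_{i+1}$, for any point $z_i$ on this sphere the continuity of $G(\cdot,y)$ on $\Omega \setminus \{y\}$ gives
\[
\sup_{A_i} G \le \cH \inf_{A_i} G \le \cH G(z_i, y) \le \cH \sup_{A_{i+1}} G \le \cH^2 \inf_{A_{i+1}} G,
\]
so by induction $\sup_{A_i} G \le \cH^{j-i+1} \inf_{A_j} G$ whenever $i \le j$. For arbitrary $x_1, x_2$ in $B_{Mr}(y) \setminus B_r(y)$ I would pick $i,j$ with $x_1 \in A_i$, $x_2 \in A_j$, assume $i \le j$ without loss of generality (the opposite case being symmetric), and conclude $G(x_1, y) \le \cH^{N+1} G(x_2, y)$. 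Taking supremum over $x_1$ and infimum over $x_2$ yields the claim with $c(M, \cH) = \cH^{\lceil \log_2 M \rceil + 1}$.

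The only delicate point in this plan is to keep every doubling ball inside $\Omega$, which is why I fix $r_{N-1} = Mr/2$: the outermost application of \eqref{assum:H} is then to $B_{2 r_{N-1}}(y) \setminus B_{r_{N-1}}(y) = B_{Mr}(y) \setminus B_{Mr/2}(y)$, whose outer ball is precisely what the hypothesis $B_{Mr}(y) \Subset \Omega$ provides. Everything else is routine: the dyadic ratios make $N$ depend only on $M$, and the constant $\cH$ is inherited unchanged from \eqref{assum:H}, so the final constant depends only on $M$ and $\cH$ as required.
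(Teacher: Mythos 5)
Your proof is correct and takes essentially the same route as the paper: cover $B_{Mr}(y)\setminus B_r(y)$ by finitely many doubling annuli contained in $B_{Mr}(y)$, apply \eqref{assum:H} on each, and chain through the overlaps, yielding a constant of the form $\cH^{O(\log M)}$. The only cosmetic difference is that your consecutive annuli meet only in a shared sphere (harmless here, since \eqref{assum:H} involves genuine sup/inf and $G(\cdot,y)$ is continuous on $\Omega\setminus\{y\}$, which also disposes of the closed-versus-open annulus point when a ratio equals $2$), whereas the paper uses annuli overlapping in full annular regions of width $r/4$ and localizes the infimum in a thin annulus.
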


\begin{proof}
We write $B_R = B_R(y)$. Note that the infimum of $G(\cdot, y)$ over 
$B_{Mr} \setminus B_r$ is attained in some smaller annulus, say
\begin{equation} \label{eq:inf_G}
\inf_{B_{R+r/4} \setminus B_{R}} G(\cdot, y) 
= \inf_{B_{Mr} \setminus B_r} G(\cdot, y)
\end{equation}
for some $B_{R+r/4} \setminus B_{R} \subset B_{Mr} \setminus B_r$.

Let us now cover the annulus $B_{Mr} \setminus B_r$ with 
overlapping annuli to use the chaining argument. 
There exist an integer $N = N(M) \in \mathbb{N}$ and
a sequence of radii $r = r_0 < \cdots < r_N = Mr/2$ such that 
$2r_{j-1} - r_{j} > r/2$ for all $j = 1, \dots, N$. 
We define $A_j = B_{2r_j} \setminus B_{r_j}$, then the family of 
annuli $\lbrace A_j \rbrace_{j=0}^N$ satisfies the following properties:
\begin{equation*}
\begin{split}
&B_{Mr} \setminus B_r = \cup_{j=0}^N A_j \quad\text{and} \\
&B_{r_j+3r/8} \setminus B_{r_j+r/8} \subset A_{j-1} \cap A_j
\quad\text{for}~ j=1, \dots, N.
\end{split}
\end{equation*}
In other words, the annulus $B_{Mr} \setminus B_{r}$ is covered by $\lbrace A_j \rbrace_{j=0}^N$ and each intersection $A_{j-1} \cap A_{j}$ between adjacent annuli has enough space to contain an annulus whose radius difference is $r/4$. Therefore, $B_R \setminus B_{R+r/4} \subset A_{j_0}$ for some 
$j_0 \in \lbrace 0, \dots, N\rbrace$.

For any $j = 0, \dots, N-1$, we observe that
\begin{equation*}
\sup_{A_j} G \leq c_H \inf_{A_j} G \leq c_H \inf_{A_j \cap A_{j+1}} G \leq c_H \sup_{A_j \cap A_{j+1}} G \leq c_H \sup_{A_{j+1}} G
\end{equation*}
by \eqref{assum:H}. Similarly, we also have $\sup_{A_{j+1}} G \leq c_H \sup_{A_j}G$.
Therefore, we have by \eqref{eq:inf_G}
\begin{equation*}
\sup_{A_j} G \leq c_H^{|j-j_0|} \sup_{A_{j_0}} G \leq c_H^{N+1} \inf_{A_{j_0}} G \leq c_H^{N+1} \inf_{B_{R+r/4} \setminus B_{R}} G = c_H^{N+1} \inf_{B_{Mr} \setminus B_r} G
\end{equation*}
for any $j=0,\dots, N$. We conclude the lemma by observing that
\begin{equation*}
\sup_{B_{Mr} \setminus B_r} G \leq \max_j \bigg\lbrace \sup_{A_j} G \bigg\rbrace \leq c_H^{N+1} \inf_{B_{Mr} \setminus B_r} G.
\end{equation*}
\end{proof}

We now provide the proof of pointwise lower bounds of Green functions 
by gathering pieces of integrals in the preceding lemmas.

\begin{proof} [Proof of \Cref{thm:lower-bound}]
Let $x_0, y_0 \in \Omega$ with $x_0 \neq y_0$ and let $r = |x_0 - y_0| \leq \mathrm{dist}(y_0, \partial \Omega)/2$. Let $\varepsilon$ be the constant in \Cref{lem:small_term} and let $\eta : \Rd \rightarrow \R$ be a cut-off function satisfying
\begin{align*}
\eta \in [0,1], \quad 
\eta = 1~\text{in}~ B_{\varepsilon r}(y_0), \quad 
\eta = 0 ~\text{outside}~ B_r(y_0), \quad\text{and}\quad
|\nabla \eta| \leq 4r^{-1}.
\end{align*}
Let us write $G_{\rho} = G_{\rho}(\cdot, y_0)$ with $\rho < \varepsilon r/2$ and $A^R_r = B_R \setminus B_r = B_R(y_0) \setminus B_r(y_0)$. By testing the equation \eqref{eq:regularized-G} with $\eta$, we have
\begin{align} \label{eq:I_1,I_2,I_3,I_4_lower}
\begin{split}
1
=&~ 2\int_{B_{\varepsilon r}} \int_{\Rd \setminus B_r} 
(G_{\rho}(y) - G_{\rho}(x)) (-1) k(x,y) \dy \dx \\
&+ 2\int_{B_{\varepsilon r/2}} \int_{A^r_{\varepsilon r}} 
(G_{\rho}(y) - G_{\rho}(x)) (\eta(y) - 1) k(x,y) \dy \dx \\
&+ 2\int_{\Rd \setminus B_{3r/2}} \int_{A^r_{\varepsilon r}} 
(G_{\rho}(y) - G_{\rho}(x)) \eta(y) k(x,y) \dy \dx \\
&+ \iint_{\left( A^{3r/2}_{\varepsilon r/2} \times A^r_{\varepsilon r} \right) 
\cup \left( A^r_{\varepsilon r} \times A^{3r/2}_{\varepsilon r/2} \right)} 
(G_{\rho}(y) - G_{\rho}(x)) (\eta(y) - \eta(x)) k(x,y) \dy \dx 
=: I_1 + I_2 + I_3 + I_4.
\end{split}
\end{align}

We first use \Cref{lem:small_term} to have
\begin{align} \label{eq:I_1_lower}
I_1 
\leq 2\int_{B_{\varepsilon r}} \int_{\Rd \setminus B_r} G_{\rho}(x) k(x,y) \dy \dx 
\leq \frac{1}{2}.
\end{align}

For $I_2$, we observe that 
$(G_{\rho}(y) - G_{\rho}(x))(\eta(y) - 1) = G_{\rho}(x)(1-\eta(y)) - G_{\rho}(y)(1-\eta(y)) \leq G_{\rho}(x)$. 
Thus, by means of \eqref{eq:global_term} we obtain
\begin{align} \label{eq:I_2_lower}
I_2 
\leq 2 \int_{B_{\varepsilon r/2}} \int_{A^r_{\varepsilon r}} G_{\rho}(x) k(x,y) \dy \dx
\leq Cr^{d-\alpha} \sup_{A^{3r/2}_{\varepsilon r/2}} G_{\rho}.
\end{align}

We next estimate the third term. Note that we have
\begin{align*}
I_3 
\leq 2 \int_{\Rd \setminus B_{3r/2}} \int_{A^r_{\varepsilon r}} G_{\rho}(y) k(x,y) \dy \dx
\leq 2\bigg( \sup_{A^r_{\varepsilon r}} G_{\rho} \bigg)
\int_{A^r_{\varepsilon r}} \int_{\Rd \setminus B_{3r/2}} k(x,y) \dy \dx,
\end{align*}
where we used the symmetry of $k$ in the last inequality. 
By the assumption \eqref{assum:U1} we obtain
\begin{align*}
\int_{A^r_{\varepsilon r}} \int_{\Rd \setminus B_{3r/2}} k(x,y) \dy \dx
\leq \int_{A^r_{\varepsilon r}} \int_{\Rd \setminus B(x, r/2)} k(x,y) \dy \dx
\leq Cr^{d-\alpha},
\end{align*}
from which we deduce
\begin{align} \label{eq:I_3_lower}
I_3 \leq C r^{d-\alpha} \sup_{A^r_{\varepsilon r}} G_{\rho}.
\end{align}

For the last term we make use of the symmetry of $k$ and H\"older's inequality to have
\begin{align*}
I_4 
&\leq 2\int_{A^{3r/2}_{\varepsilon r/2}} \int_{A^r_{\varepsilon r}}
| G_{\rho}(y) - G_{\rho}(x) | | \eta(y) - \eta(x) | k(x,y) \dy \dx \\
&\leq 2 \left( \int_{A^{3r/2}_{\varepsilon r/2}} \int_{A^r_{\varepsilon r}}
| G_{\rho}(y) - G_{\rho}(x) |^2 k(x,y) \dy \dx \right)^{1/2} \\
&\quad \times \left( \int_{A^{3r/2}_{\varepsilon r/2}}
\int_{A^r_{\varepsilon r}} | \eta(y) - \eta(x) |^2 k(x,y) \dy \dx \right)^{1/2}.
\end{align*}
By using the assumption \eqref{assum:U1}, we have
\begin{align*}
\int_{A^{3r/2}_{\varepsilon r/2}} \int_{A^r_{\varepsilon r}}
|\eta(x) - \eta(y)|^2 k(x,y) \dy \dx 
\leq 16r^{-2} \int_{A^{3r/2}_{\varepsilon r/2}} \int_{B(x, 5r/2)} 
|y-x|^2 k(x,y) \dy \dx 
\leq Cr^{d-\alpha},
\end{align*}
Therefore, it follows from \Cref{lem:local_term}, together with the estimate above, 
that
\begin{align} \label{eq:I_4_lower}
I_4 \leq C r^{d-\alpha} \sup_{A^{3r/2}_{\varepsilon r/2}} G_{\rho}.
\end{align}
Combining all estimates \eqref{eq:I_1,I_2,I_3,I_4_lower}--\eqref{eq:I_4_lower} 
we arrive at that
\begin{align*}
1 \leq C r^{d-\alpha} \sup_{A^{3r/2}_{\varepsilon r/2}} G_{\rho}(\cdot, y_0) + \frac{1}{2},
\end{align*}
or equivalently,
\begin{align*}
\sup_{A^{3r/2}_{\varepsilon r/2}} G_{\rho}(\cdot, y_0) \geq C | x_0 - y_0 |^{d-\alpha}.
\end{align*}
By taking $\rho \to 0$ and applying \Cref{lem:Harnack}, we have
\begin{align*}
\inf_{A^{3r/2}_{\varepsilon r/2}} G(\cdot, y_0) \geq C | x_0 - y_0 |^{\alpha-d}.
\end{align*}
Note that the above essential infimum is realized as the infimum since $G$ is 
continuous in $\Omega \setminus \lbrace y_0 \rbrace$.
Since $x_0 \in A^{3r/2}_{\varepsilon r/2}(y_0)$, we conclude the theorem.
\end{proof}

\section{Symmetry} \label{sec:symmetry}

This section is devoted to the proof of the symmetry of the Green function.

\begin{proof} [Proof of \Cref{thm:symm}]
Let $x, y \in \Omega$ with $x \neq y$. Recall that the Green function $G$ of $L$ on $\Omega$ is constructed by a sequence of regularized Green functions. Thus, we have a sequence $\rho_i < |x- y|/3$ converging to zero and a sequence of regularized Green functions $G_{\rho_i}( \cdot, y)$ converging to $G(\cdot, y)$. Let $\sigma_j < |x-y|/3$ be another sequence converging to zero and let $G_{\sigma_j}(\cdot, x)$ be the corresponding regularized Green functions converging to $G(\cdot, x)$. Then we have
\begin{align*}
a_{ij}:= \fint_{B_{\rho_i}(y)} G_{\sigma_j}(z, x) \,\d z = \mathcal{E}(G_{\rho_i}(\cdot, y), G_{\sigma_j}(\cdot, x)) = \fint_{B_{\sigma_j}(x)} G_{\rho_i}(z, y) \, \d z.
\end{align*}
Since $G_{\sigma_j}(\cdot, x)$ weakly converges to $G(\cdot, x)$ in $W^{\beta/2, q}_\Omega(\Rd)$ for all $\beta \in (0,\alpha)$ and $q \in [1, d/(d-\alpha/2))$, and $G(\cdot, y)$ is continuous on $B_{\sigma_j}(x)$, we have
\begin{align*}
\lim_{i \rightarrow \infty} \lim_{j \rightarrow \infty} a_{ij} = \lim_{i \rightarrow \infty} \fint_{B_{\rho_i}(y)} G(z, x) \, \d z = G(y, x).
\end{align*}
In the same way we obtain
\begin{align*}
\lim_{j \rightarrow \infty} \lim_{i \rightarrow \infty} a_{ij} = G(x, y).
\end{align*}
By \cite[Theorem 1.1]{KaWe22}, we have the uniform estimate on the H\"older norm of $G_{\rho_i}(\cdot, y)$ on $B_{\sigma_j}(x)$ which is independent of $i$. Therefore, the double sequence $a_{ij}$ converges uniformly in $j$ with respect to $i$. The proof is complete.
\end{proof}


\end{document}